\newtheorem{theorem}{Theorem}
\newtheorem{proposition}[theorem]{Proposition}
\newtheorem{cor}[theorem]{Corollary}
\newtheorem{lemma}[theorem]{Lemma}
\newtheorem{Ex}{Example}
\DeclareMathOperator{\STAB}{STAB} 
\DeclareMathOperator{\conv}{conv} 
\newcommand{\R}{\mathbb{R}} 
\newcommand{\compG}[1]{G(#1)} 
\newcommand{\incompG}[1]{\overline{G}(#1)} 
\newcommand{\ent}[1]{H(#1)} 
\newcommand{\cent}[1]{H(\overline{#1})} 
\newcommand{\comp}[1]{\overline{#1}} 
\newcommand{\epoch}{E}
\newcommand{\epochA}{\Psi}
\newcommand{\epochB}{\Omega}
\newcommand{\epochAsize}{\psi}
\newcommand{\epochBsize}{\omega}
\title{Poset Entropy versus Number of Linear Extensions: the Width-$2$ Case}
\date{}
\author{Samuel Fiorini \and Selim Rexhep}
\begin{document}

\maketitle

\begin{abstract}
Kahn and Kim (J.\ Comput.\ Sci., 1995) have shown that for a finite poset $P$, the entropy of the incomparability graph of $P$ (normalized by multiplying by the order of $P$) and the base-$2$ logarithm of the number of linear extensions of $P$ are within constant factors from each other. The tight constant for the upper bound was recently shown to be $2$ by Cardinal, Fiorini, Joret, Jungers and Munro (Combinatorica, 2013). Here, we refine this last result in case $P$ has width $2$: we show that the constant can be replaced by $2-\varepsilon$ if one also takes into account the number of connected components of size~$2$ in the incomparability graph of $P$. Our result leads to a better upper bound for the number of comparisons in algorithms for the problem of sorting under partial information.
\end{abstract}

\section{Introduction} \label{sec:intro}
 
The entropy of a graph is an information theoretic concept introduced by K\"orner in 1973 \cite{Ko}. Since then, links with many interesting combinatorial objects have been found, see the survey paper of Simonyi \cite{Sim} for more information. 

In this paper, we consider the case in which the graph is the incomparability graph $\incompG{P}$ of a (finite) poset $P$. We denote by $\cent{P} := H(\incompG{P})$ the entropy of this graph. Kahn and Kim~\cite{KK} have proved that $|P| \cdot \cent{P}$ is within a constant of $\log e(P)$, the base-$2$ logarithm of the number of linear extensions of $P$. (Throughout this paper, $\log$ denotes the base-$2$ logarithm).

\begin{theorem}[Kahn and Kim~\cite{KK}] \label{thm:factor_11} For every poset $P$:
$$
\log e(P) \leqslant |P| \cdot \cent{P} \leqslant c_0 \log e(P)
$$
for $c_0 = (1 + 7 \log \mathrm{e}) \simeq 11.1$.
\end{theorem}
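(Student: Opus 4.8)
The two inequalities are of quite different character: the left one is a short convex-geometry computation, while the right one is the substance of the theorem, and I would prove it by induction on $|P|$, peeling off a carefully chosen chain at each step.

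\emph{The lower bound.} Recall K\"orner's polytope formula $\cent{P}=\frac{1}{|P|}\min_{q}\sum_{v}\log\frac{1}{q_v}$, the minimum being over $q$ in the stable set polytope $\STAB(\incompG{P})$; so, writing $n:=|P|$, it suffices to show $\prod_{v}q_v\leqslant 1/e(P)$ for every such $q$. Every clique of $\incompG{P}$ is an antichain of $P$, so $q$ satisfies $q(A)\leqslant 1$ for every antichain $A$, hence $q\cdot x\leqslant 1$ for every $x$ in the convex hull of the antichain indicators $\mathbf{1}_A$ — Stanley's chain polytope $C(P)$, whose volume is $e(P)/n!$. Then the linear map $x\mapsto(q_1x_1,\dots,q_nx_n)$ sends $C(P)$ into the standard simplex $\{y\geqslant 0:\sum_v y_v\leqslant 1\}$, and comparing volumes gives $(\prod_v q_v)\cdot e(P)/n!\leqslant 1/n!$, i.e. $\prod_v q_v\leqslant 1/e(P)$.

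\emph{The upper bound: reductions and the recursion.} I would argue by strong induction on $n=|P|$. Graph entropy is additive over the connected components of $\incompG{P}$ (the stable set polytope splits as a product) while $e(\cdot)$ is supermultiplicative over disjoint posets, so a disconnected $\incompG{P}$ reduces at once to smaller instances; and an isolated vertex of $\incompG{P}$ is an element comparable to every other, whose removal leaves $e(P)$ unchanged and satisfies $n\cent{P}=(n-1)\cent{P-v}$. So one may assume $\incompG{P}$ connected with no isolated vertex. Now fix any chain $C$ of $P$ with $|C|=h$, let $q'$ realise $\cent{P-C}$, and feed into $\STAB(\incompG{P})$ the distribution on chains of $P$ that equals $C$ with probability $h/n$ and a $q'$-distributed chain of $P-C$ otherwise; after using $\log\frac{n}{n-h}\leqslant\frac{h}{n-h}\log\mathrm{e}$ this yields
$$ n\,\cent{P}\;\leqslant\;h\log\frac{\mathrm{e}n}{h}\;+\;(n-h)\,\cent{P-C}\;\leqslant\;h\log\frac{\mathrm{e}n}{h}\;+\;c_0\log e(P-C), $$
the second step by induction. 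Hence the proof closes as soon as one can choose $C$ with $\log\frac{e(P)}{e(P-C)}\geqslant\frac{h}{c_0}\log\frac{\mathrm{e}n}{h}$; since $e(P)\leqslant\binom{n}{h}e(P-C)$ always holds, this asks the chosen chain to realise a constant fraction of the largest conceivable proliferation of linear extensions.

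\emph{The main obstacle.} That last selection is where all the work lies, and the origin of the constant $1+7\log\mathrm{e}$. A maximum chain will not do: a thin, tightly constrained maximum chain can multiply $e(P)$ by only $O(1)$ while still costing $\Theta(\log n)$ on the entropy side (and a maximum chain of a total order costs $\Theta(n\log\mathrm{e})$ while gaining nothing — though there $\incompG{P}$ is edgeless and the reductions already dispose of it). One must instead grow $C$ adaptively, level by level through elements that retain many incomparabilities, using connectivity of $\incompG{P}$ to guarantee that each new level opens up enough fresh valid insertion positions — a Kahn--Saks-flavoured ``many positions'' statement. The natural-logarithm slack in passing from $e(P-C)$ to $e(P)$ across the geometrically shrinking stages of this construction is what manufactures the factor $\log\mathrm{e}$, the sum of the stage losses produces the $7$, and the leading $1$ is the single coarsest step charged directly against $\log e(P)$. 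Turning this influence heuristic into a theorem — a uniform lower bound, over all posets, on how fast reinserting a well-chosen chain multiplies the number of linear extensions — is the step I expect to be genuinely hard.
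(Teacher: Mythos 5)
Your lower bound is fine: the volume comparison between Stanley's chain polytope (convex hull of antichain indicators, volume $e(P)/n!$) and the standard simplex under the diagonal map $x\mapsto(q_1x_1,\dots,q_nx_n)$ correctly yields $\prod_v q_v\leqslant 1/e(P)$ for any $q\in\STAB(\incompG{P})$, hence $\log e(P)\leqslant |P|\cdot\cent{P}$. Note, though, that the paper itself gives no proof of this theorem at all --- it is quoted from Kahn and Kim --- so the relevant comparison is with their original argument, which does not proceed by your peel-off-a-chain induction.

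The upper bound, which is the actual content of the theorem, is not proved in your proposal. Your reductions (disconnected $\incompG{P}$, isolated vertices) and the recursion $n\,\cent{P}\leqslant h\log\frac{\mathrm{e}n}{h}+(n-h)\cent{P-C}$ are correct, but they only reduce the theorem to the claim that one can always choose a chain $C$ with $\log\frac{e(P)}{e(P-C)}\geqslant\frac{|C|}{c_0}\log\frac{\mathrm{e}n}{|C|}$. That claim is precisely the quantitative heart of Kahn--Kim's result, and you offer no proof of it --- only a heuristic about growing $C$ adaptively and an impressionistic accounting of where the constants $1$, $7$ and $\log\mathrm{e}$ would come from, ending with the admission that this step is the hard one. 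A maximum chain indeed fails, and nothing in the proposal identifies a chain (or any other structure) for which the required multiplicative gain in $e(\cdot)$ can be certified; Kahn and Kim in fact argue differently, extracting many linear extensions directly from the entropy-optimal fractional point rather than by deleting a chain and inducting. As it stands, the proposal establishes only the easy inequality, and the bound $|P|\cdot\cent{P}\leqslant(1+7\log\mathrm{e})\log e(P)$ remains unproven.
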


Cardinal, Fiorini, Joret, Jungers and Munro \cite{SUPIjournal} improved the constant in the upper bound to~$2$. This is tight since if $P$ is a two-elements antichain we have $|P| \cdot \cent{P} = 2$ and $\log e(P) = 1$.

\begin{theorem}[Cardinal et al.~\cite{SUPIjournal}] \label{thm:factor_2} For every poset $P$:
$$
|P| \cdot \cent{P} \leqslant 2 \log e(P).
$$
\end{theorem}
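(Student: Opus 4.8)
\emph{Step 1: reduce to finding one good point in a polytope.}
Incomparability graphs are perfect, so $\STAB(\incompG{P})$ is exactly the set of $a\in\R^{P}_{\geqslant 0}$ satisfying $\sum_{x\in A}a_x\leqslant 1$ for every antichain $A$ of $P$ (these being the cliques of $\incompG{P}$), and K\"orner's formula for the entropy of a graph under the uniform distribution gives $|P|\cdot\cent{P}=\min\bigl\{\sum_{x\in P}\log(1/a_x):a\in\STAB(\incompG{P})\bigr\}$. So it is enough to exhibit a single feasible $a$ (that is, $a\geqslant 0$ with $\sum_{x\in A}a_x\leqslant 1$ for all antichains $A$) with $\prod_{x\in P}a_x\geqslant e(P)^{-2}$. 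As a sanity check, the already-known lower bound $\log e(P)\leqslant|P|\cdot\cent{P}$ amounts to $\prod_{x\in P}a_x\leqslant e(P)^{-1}$ for \emph{every} feasible $a$ (one gets $\sum_{\sigma}\prod_{k}a_{\sigma^{-1}(k)}\leqslant 1$ over the linear extensions $\sigma$ of $P$ by peeling minimal elements and applying the antichain inequality to the current set of minimal elements each time), so $e(P)^{-2}$ is the natural target.

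\emph{Step 2: the point.}
Let $\sigma$ be a uniformly random linear extension of $P$. Extract from $\sigma$ a chain $C(\sigma)=(z_1<_P z_2<_P\cdots)$ greedily: let $z_1$ be the first element of $P$ in the order $\sigma$; having chosen $z_i$, if $z_i$ is maximal in $P$ stop, otherwise let $z_{i+1}$ be the first element of $\{\,w:w>_P z_i\,\}$ in the order $\sigma$. By transitivity $C(\sigma)$ is a chain of $P$, and the process terminates only at a maximal element of $P$. Set $a_x:=\Pr_\sigma[\,x\in C(\sigma)\,]$. Since a chain meets an antichain in at most one element, $\sum_{x\in A}a_x=\mathbb{E}_\sigma|A\cap C(\sigma)|\leqslant 1$ for every antichain $A$, so $a\in\STAB(\incompG{P})$.

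\emph{Step 3: the product bound (the crux).}
It remains to show $\prod_{x\in P}a_x\geqslant e(P)^{-2}$, equivalently $\prod_{x\in P}c(x)\geqslant e(P)^{|P|-2}$, where $c(x)$ is the number of linear extensions $\sigma$ of $P$ with $x\in C(\sigma)$, so that $a_x=c(x)/e(P)$. I would argue by induction on $|P|$; the cases $|P|\leqslant 2$ are immediate, with equality for the two-element antichain. If $\incompG{P}$ is disconnected, then $P$ is a nontrivial ordinal sum $P_1\oplus\cdots\oplus P_t$; here $e(P)=\prod_i e(P_i)$, the restrictions $\sigma|_{P_i}$ are independent uniform linear extensions of the $P_i$, and $C(\sigma)$ is the concatenation of the greedy chains $C(\sigma|_{P_i})$ — the process cannot stop inside a non-final block, and it re-enters $P_{i+1}$ at its $\sigma$-first (hence minimal) element. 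Thus $\Pr_\sigma[x\in C(\sigma)]$ is the analogous quantity computed inside the block of $x$, the desired inequality factors over the blocks, and the claim follows from the induction hypothesis. If $\incompG{P}$ is connected, I would peel off the random bottom element $z_1$: conditioned on $z_1=m$ — which occurs with probability $e(P-m)/e(P)$ over minimal $m$ — the restriction $\sigma|_{P-m}$ is a uniform linear extension of $P-m$. The main obstacle is that after $m$ the greedy process resumes at $z_2\geqslant_P m$, which need not be minimal in $P-m$, so $C(\sigma)\setminus\{m\}$ is not literally a greedy chain of $P-m$; one must either strengthen the induction hypothesis to allow an arbitrary ``seed'' from which the greedy chain is launched, or run a direct double count matched to the recursion $e(P)=\sum_{m\text{ minimal}}e(P-m)$. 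Either way the step must be lossless: the target inequality is tight for chains, for the two-element antichain, and for every ordinal sum assembled from these, so no multiplicative constant can be conceded.
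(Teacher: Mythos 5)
Your Steps 1 and 2 are fine, but they are the easy part: incomparability graphs are perfect, so exhibiting a point $a\in\STAB(\incompG{P})$ with $\prod_{x}a_x\geqslant e(P)^{-2}$ indeed suffices, and the greedy-chain probabilities $a_x=\Pr_\sigma[x\in C(\sigma)]$ are feasible simply because a chain meets every antichain at most once. The entire content of the theorem, however, is your Step 3, the inequality $\prod_x \Pr_\sigma[x\in C(\sigma)]\geqslant e(P)^{-2}$, and this is exactly the step you do not prove. The ordinal-sum reduction is correct, but in the connected case the induction genuinely breaks for the reason you yourself identify: conditioned on the $\sigma$-first element $m$, the continuation of the greedy chain starts at the $\sigma$-first element of the up-set of $m$, not at the $\sigma$-first element of $P-m$, so $C(\sigma)\setminus\{m\}$ is not the greedy chain of $P-m$ and the induction hypothesis does not apply. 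Neither of your proposed repairs is carried out, and the first one, as stated, cannot work verbatim: for a ``seeded'' greedy chain launched inside the up-set of $m$, every element outside that up-set has probability $0$ of being on the chain, so the product over all of $P-m$ vanishes and the strengthened statement would have to be reformulated (e.g.\ restricted to the up-set, with a separate accounting for the remaining elements) before one could even attempt an induction. Since the target inequality is tight for the two-element antichain and for ordinal sums of chains and such antichains, any such repair must be lossless, which rules out conceding constants along the way; you acknowledge this but it makes the missing argument all the more essential. Note also that the paper you are working from offers no proof to fall back on --- Theorem~\ref{thm:factor_2} is quoted from Cardinal et al.~\cite{SUPIjournal} --- so your Step 3 would have to stand on its own, and at present it is a conjecture supported only by the plausibility of small cases, not a proof.
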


Our starting point is the observation that the upper bound is tight if every element of $P$ is incomparable to at most one other element, that is, $P$ is the ordinal sum of one-element and two-elements antichains: $P = A_1 \oplus A_2 \oplus \cdots \oplus A_k$ where each $|A_i| \leqslant 2$. Thus it seems likely that for some small enough constant $\varepsilon > 0$, one can prove that the posets with $|P| \cdot \cent{P} \geqslant (2-\varepsilon) \log e(P)$ possess a very constrained structure. Our main result is to establish such a phenomenon for width-$2$ posets and thus refine Theorem~\ref{thm:factor_2} in this case. We recall that the \emph{width} of poset $P$ is the size of a largest antichain of $P$.

\begin{theorem} \label{thm:width-2} Let $P$ be a width-$2$ poset and let $\kappa_2(P)$ denote the number of size-$2$ connected components of $\incompG{P}$. Then
\begin{equation}
\label{eq:width-2}
|P| \cdot \cent{P} \leqslant (2 - \varepsilon) \log e(P) + \varepsilon \, \kappa_2(P)
\end{equation}
for $\varepsilon = 2 - \frac{3 \log 3 - 2}{\log 3} \simeq 0.26$.
\end{theorem}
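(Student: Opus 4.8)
The plan is to exploit the structure of width-2 posets, which are well understood: by Dilworth's theorem, $P$ decomposes into two chains, and the incomparability graph $\incompG{P}$ is a comparability graph of an interval-like structure. More useful here is the observation that a width-2 poset breaks along its "cut points" into a sequence of indecomposable blocks $B_1 \oplus B_2 \oplus \cdots \oplus B_m$ (an ordinal sum), where each $B_i$ either is a single element, or is a width-2 poset whose incomparability graph is connected. Both the entropy and $\log e(P)$ are additive over ordinal sums: $\cent{P} = \sum_i \cent{B_i}$ after renormalizing, $\log e(P) = \sum_i \log e(B_i)$, and $|P| \cdot \cent{P} = \sum_i |B_i| \cdot \cent{B_i}$ (the last because in the entropy normalization, the $|P|$ factor distributes correctly — this is exactly the decomposition used in \cite{SUPIjournal}). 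Likewise $\kappa_2(P) = \sum_i \kappa_2(B_i)$, where $\kappa_2(B_i)$ is $1$ if $B_i$ is a two-element antichain and $0$ otherwise. So it suffices to prove \eqref{eq:width-2} for each block $B_i$ separately.

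**Reduction to two cases**

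First I would dispose of the trivial blocks: if $B_i$ is a single element then all three quantities vanish and the inequality holds; if $B_i$ is a two-element antichain then $|B_i| \cdot \cent{B_i} = 2$, $\log e(B_i) = 1$, $\kappa_2(B_i) = 1$, and the right-hand side is $(2 - \varepsilon) + \varepsilon = 2$, so equality holds (this is precisely why the $\varepsilon \, \kappa_2$ term must be there). The heart of the matter is therefore a connected width-2 block $B$ with $|B| \geqslant 3$ and $\kappa_2(B) = 0$: I must show $|B| \cdot \cent{B} \leqslant (2-\varepsilon) \log e(B)$. The value $\varepsilon = 2 - \tfrac{3\log 3 - 2}{\log 3}$ is calibrated so that $(2-\varepsilon)\log 3 = 3\log 3 - 2$, i.e. the bound is tight for the three-element "V" or "$\Lambda$" poset (one element below or above a two-element antichain), where $|B| = 3$, $e(B) = 3$, and $|B| \cdot \cent{B} = 3 \log 3 - 2$ — so this small case is the extremal configuration driving the constant, and it must be checked by hand (computing $H$ of the path $P_3$).

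**The induction and the main obstacle**

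For the general connected block $B$ with $|B| \geqslant 3$, I would argue by induction on $|B|$, peeling off a minimal element $x$ of $B$. Writing $B' = B \setminus x$, one has $e(B) \geqslant$ (number of linear extensions of $B'$ compatible with inserting $x$), and there is a corresponding quantitative handle on how $\cent{B}$ relates to $\cent{B'}$ — this is where the technical engine of \cite{KK} and \cite{SUPIjournal} enters, bounding the increment in $|B| \cdot \cent{B}$ in terms of $\log$ of the number of positions available for $x$. The delicate point is that removing $x$ may disconnect $\incompG{B'}$ or create a new size-2 component, which would spoil the induction hypothesis; in a width-2 poset, though, the incomparability graph is a union of paths and one can choose $x$ so that $B'$ stays "good" (connected, no size-2 components) or else falls into the already-handled base cases. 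I expect the main obstacle to be exactly this bookkeeping: controlling the entropy increment tightly enough — with the sharp constant $2 - \varepsilon$ rather than the loose $2$ — while simultaneously tracking the number of size-2 components through the peeling, since a naive application of the \cite{SUPIjournal} estimate only gives the constant $2$. Making the increment argument quantitative will likely require a careful case analysis on the local structure at $x$ (how many elements $x$ is incomparable to, and whether its neighbor has other incomparabilities), using convexity/concavity of the binary entropy function to absorb the slack into the $\log e$ term.
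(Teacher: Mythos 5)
Your first reduction (ordinal-sum decomposition into blocks, additivity of $|P|\cdot\cent{P}$, $\log e(P)$ and $\kappa_2$, equality for the two-element antichain) is exactly the paper's first step, and your calibration of $\varepsilon$ via a tight three-element case is numerically right, though the poset is misidentified: the ``V''/``$\Lambda$'' poset (one element below or above a two-element antichain) has $e=2$ and a \emph{disconnected} incomparability graph, hence falls under the $\kappa_2$ case; the connected configuration with $e=3$ and $|B|\cdot\cent{B}=3\log 3-2$ is a two-element chain together with one element incomparable to both, i.e.\ $\incompG{B}$ a path/star on $3$ vertices (the paper's star lemma with two leaves).

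The genuine gap is that the entire quantitative heart of the theorem is left unproved. You reduce to a connected block $B$, propose to peel off a minimal element, and then defer the key estimate --- that the increment in $|B|\cdot\cent{B}$ is at most $(2-\varepsilon)$ times the increment in $\log e$ --- to ``the technical engine'' of Kahn--Kim and Cardinal et al., while acknowledging that those estimates only give the constant $2$; no replacement argument is offered, so the sharp constant is never established. The plan also rests on a false structural claim: the incomparability graph of a width-$2$ poset is \emph{not} a union of paths (a $3$-chain plus an element incomparable to all of it gives the star $K_{1,3}$, and degrees can be arbitrarily large), so the proposed bookkeeping for keeping $B'$ connected with no size-$2$ components has no foundation; worse, deleting an element changes the optimal point of the stable-set polytope globally, so there is no ready local control of $\cent{B}-\cent{B'}$. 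The paper's proof works quite differently: it passes to the canonical Körner--Marton interval representation of $I(P)$, restores connectivity with ``phantom edges'' (since $\incompG{I(P)}$ may be disconnected even when $\incompG{P}$ is connected), and removes a single \emph{incomparability} $uv$ whose intervals overlap in length exactly $1/(\epochAsize_i\epochBsize_i)$ inside an epoch; $\Delta h$ is bounded by an explicit perturbation of four intervals, $\Delta e$ by counting backward linear extensions, these are compared by a numerical inequality, and the residual configurations (paths, stars, nearly balanced epochs) are checked by hand. None of these steps, nor any substitute for them, appears in your proposal.
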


Note that Inequality (\ref{eq:width-2}) can be written

$$|P| \cdot \cent{P} \leqslant \left (2 - \varepsilon \left (1 - \frac{\kappa_2(P)}{\log e(P)} \right) \right) \log e(P)$$where $1 - \frac{\kappa_2(P)}{\log e(P)}$ is nonnegative since $e(P) \geqslant 2^{\kappa_2(P)}$ with equality if and only if the components of $\incompG{P}$ are all of size either $1$ or $2$. From this we deduce:

\begin{cor}Let $P$ be a width-$2$ poset, then $|P| \cdot \cent{P} = 2\log(e(P))$ if and only if the maximum degree of $\incompG{P}$ is $1$. \end{cor}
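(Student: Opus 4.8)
The plan is to read off the corollary from Theorem~\ref{thm:width-2} together with the elementary inequality $e(P) \geqslant 2^{\kappa_2(P)}$ noted just above it, treating the two directions separately.

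First I would handle the forward direction: assume $|P| \cdot \cent{P} = 2 \log e(P)$. Feeding this into \eqref{eq:width-2} and cancelling $(2-\varepsilon)\log e(P)$ from both sides leaves $\varepsilon \log e(P) \leqslant \varepsilon\,\kappa_2(P)$; since $\varepsilon > 0$ this says $\log e(P) \leqslant \kappa_2(P)$. But $\log e(P) \geqslant \kappa_2(P)$ for every poset (this is $e(P) \geqslant 2^{\kappa_2(P)}$), so $\log e(P) = \kappa_2(P)$, and the equality case of $e(P) \geqslant 2^{\kappa_2(P)}$ recorded in the excerpt is exactly that every connected component of $\incompG{P}$ has size $1$ or $2$, i.e.\ that $\incompG{P}$ has maximum degree at most $1$.

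For the converse, suppose $\incompG{P}$ has maximum degree at most $1$. As already noted in the introduction, this is equivalent to $P$ being an ordinal sum $A_1 \oplus \cdots \oplus A_k$ of antichains with each $|A_i| \leqslant 2$, so $e(P) = \prod_i e(A_i) = 2^{\kappa_2(P)}$ and hence $\log e(P) = \kappa_2(P)$. It remains to evaluate $|P| \cdot \cent{P}$: an ordinal sum makes $\incompG{P}$ the disjoint union of the graphs $\incompG{A_i}$ on disjoint vertex sets, and the (un-normalized) graph entropy is additive over such disjoint unions, so $|P|\cdot\cent{P} = \sum_i |A_i| \cdot \cent{A_i}$, where a singleton contributes $0$ and a two-element antichain contributes $2$ (the entropy of $K_2$ under the uniform distribution being $1$, a one-line optimization over $\{x_1 + x_2 \leqslant 1\}$). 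Summing gives $|P| \cdot \cent{P} = 2\kappa_2(P) = 2\log e(P)$, as wanted.

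I do not anticipate a real obstacle: the forward direction is an immediate corollary of Theorem~\ref{thm:width-2}, and the converse needs only the two standard facts about graph entropy used above (additivity over disjoint unions; value $1$ on a single edge). The one point deserving a sentence in the write-up is the degenerate case where $P$ is a chain: then $|P|\cdot\cent{P} = 2\log e(P) = 0$ holds while the maximum degree is $0$, so ``maximum degree $1$'' in the statement should be understood as ``maximum degree at most~$1$''.
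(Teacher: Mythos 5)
Your proof is correct and follows essentially the same route the paper intends: the forward direction is exactly the deduction from the rewritten form of \eqref{eq:width-2} together with the equality case of $e(P) \geqslant 2^{\kappa_2(P)}$, and the converse is the tightness observation for ordinal sums of antichains of size at most $2$ (Proposition~\ref{DiG} plus $|A_i|\cdot\cent{A_i}=2$ for a $2$-element antichain) already noted in the introduction. Your closing remark that a chain forces reading ``maximum degree $1$'' as ``maximum degree at most $1$'' is a fair and accurate clarification of the statement.
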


We remark also that upper bounds such as those in Theorems~\ref{thm:factor_11} and \ref{thm:factor_2} translate to upper bounds on the worst case number of comparisons performed by algorithms for a sorting problem known as \emph{sorting under partial information}, see e.g.~\cite{SUPIjournal},\cite{KK} for more details. In the context of this problem, Theorem~\ref{thm:width-2} yields an improvement in the width-$2$ case (\emph{merging under partial information}) because after comparing each of the $\kappa_2(P)$ pairs of elements that form connected components of $\incompG{P}$, the constant in front of $\log e(P)$ decreases from $2$ to $2 - \varepsilon \simeq 1.74$. Furthermore, we point out that the algorithm given by Cardinal et al.~\cite{SUPIjournal} reduces the general problem to the width-$2$ case, hence Theorem~\ref{thm:width-2} also gives an improvement in the general case.

We begin in Section~\ref{sec:graph_entropy} with a brief account of the definitions and main properties of graph entropy. In Section~\ref{sec:poset_entropy}, we specialize this to (in)comparability graphs of posets. In order to help the reader understanding the proof, its general structure is explained in Section \ref{sec:struct_proof}. The intermediate results stated in Section \ref{sec:struct_proof} are then proved in detail in Sections  \ref{sec:struct_GP}, \ref{sec:phantom_edges} and  \ref{sec:small_overlap}.  The final discussion (concluding the proof) is presented in Section \ref{sec:proof}. Finally, Section \ref{sec:special_cases} handles a few particular cases that are not covered by our general argument.


\section{Graph Entropy} \label{sec:graph_entropy}

Here we recall the definition and main properties of the entropy $H(G)$ of a (finite, simple and undirected) graph $G = (V,E)$, as well as the algorithm of K\"orner and Marton to compute $H(G)$ in case $G$ is bipartite. For a more detailed discussion of graph entropy, including the origins of the concept, see the paper of Simonyi \cite{Sim}. Here, we only state the facts that are used in this work.

The definition of $H(G)$ we use relies on the \emph{stable set polytope} 
$$
\STAB(G) := \conv \left(\{\chi^S \in \mathbb{R}^V \mid S \subseteq V,\ S \text{ stable set of } G \} \right)
$$
with $\conv(\cdot)$ denoting the convex hull in $\R^V \cong \mathbb{R}^{|V|}$ and $\chi^S\in \{0,1\}^V$ the characteristic vector of $S$, defined by $\chi^S_v = 1$ if and only if $v \in S$. 

Letting $n := |V|$, the \emph{entropy} of $G$ is defined as 
\begin{equation}
\label{eq:ent_def}
H(G) 
:= \min_{x \in \STAB(G),\, x > 0} - \sum_{v \in V} \frac{1}{n} \log x_v
= \min_{x \in \STAB(G),\, x > 0} \sum_{v \in V} \frac{1}{n} \log \frac{1}{x_v}.
\end{equation}
Note that the function $f(x) := - \sum_{v \in V} \frac{1}{n} \log x_v$ is continuous over $\R^V_{>0}$ and that the point $\frac{1}{n} \chi^V$ is always in $\STAB(G)$, with $f(\frac{1}{n} \chi^V) = \log n$. Thus the minimum in \eqref{eq:ent_def} can be computed over the set $\STAB(G) \cap \{x \in \R^V_{>0} \mid f(x) \leqslant \log n\}$, which is compact. This proves that $H(G)$ is well-defined. Moreover, we have $0 \leqslant H(G) \leqslant \log n$. Finally, since $f(x)$ is strictly convex, its minimizer over $\STAB(G) \cap \R^V_{>0}$ is unique.

We remark that the original definition of graph entropy involves an arbitrary probability distribution on the vertex set $V$ of the graph, whereas the definition used here assumes a uniform distribution. This explains the factor $\frac{1}{n}$ appearing in $H(G)$.

We start with a basic result that enables us to compute the entropy of disconnected graphs. The proof follows directly from the fact that $\STAB(G_1 \cup G_2) = \STAB(G_1) \times \STAB(G_2)$ in case $G_1$ and $G_2$ have disjoint vertex sets.

\begin{proposition}\label{DiG} Let $G_1 = (V_1,\epoch_1)$ and $G_2 = (V_2,\epoch_2)$ be two graphs with disjoint vertex sets and $G = G_1 \cup G_2 = (V_1 \cup V_2, \epoch_1 \cup \epoch_2)$ their disjoint union. Then 
$$
|G| \cdot H(G) =  |G_1| \cdot H(G_1) +  |G_2| \cdot H(G_2).
$$ 
\end{proposition}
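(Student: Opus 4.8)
The plan is to unwind the definition of graph entropy in \eqref{eq:ent_def} and to exploit the product structure of the stable set polytope mentioned just before the statement. Write $n_i := |V_i|$ for $i = 1,2$ and $n := |V_1 \cup V_2| = n_1 + n_2$. Multiplying \eqref{eq:ent_def} through by the order of the graph, we get
$$
|G| \cdot H(G) = \min\Bigl\{ -\sum_{v \in V_1 \cup V_2} \log x_v \;:\; x \in \STAB(G),\ x > 0 \Bigr\},
$$
and likewise $|G_i| \cdot H(G_i) = \min\{ -\sum_{v \in V_i} \log x_v : x \in \STAB(G_i),\ x > 0\}$. So the $\tfrac{1}{n}$ normalization is precisely what is needed to turn the claimed identity into a statement about these un-normalized minima, and it suffices to show that the minimum on the left splits as the sum of the two minima on the right.

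The second step is to justify $\STAB(G) = \STAB(G_1) \times \STAB(G_2)$, where we identify $\R^{V_1 \cup V_2}$ with $\R^{V_1} \times \R^{V_2}$. Since $G$ has no edges between $V_1$ and $V_2$, a set $S \subseteq V_1 \cup V_2$ is stable in $G$ if and only if $S \cap V_1$ is stable in $G_1$ and $S \cap V_2$ is stable in $G_2$; equivalently, the characteristic vectors of stable sets of $G$ are exactly the pairs $(\chi^{S_1}, \chi^{S_2})$ with $S_i$ stable in $G_i$. Passing to convex hulls and using the elementary fact that $\conv(A \times B) = \conv(A) \times \conv(B)$ for finite sets $A \subseteq \R^{V_1}$, $B \subseteq \R^{V_2}$ gives the polytope identity. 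The only inclusion worth spelling out is $\STAB(G_1) \times \STAB(G_2) \subseteq \STAB(G)$: if $y = \sum_i \lambda_i \chi^{S_i}$ and $z = \sum_j \mu_j \chi^{T_j}$ are convex combinations of characteristic vectors of stable sets of $G_1$ and $G_2$ respectively, then $(y,z) = \sum_{i,j} \lambda_i \mu_j\, \chi^{S_i \cup T_j}$ is a convex combination of characteristic vectors of stable sets of $G$.

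With this in hand the minimization separates. For $x \in \STAB(G)$ write $x = (y,z)$ with $y \in \R^{V_1}$ and $z \in \R^{V_2}$; then $x \in \STAB(G)$ iff $y \in \STAB(G_1)$ and $z \in \STAB(G_2)$, and $x > 0$ iff $y > 0$ and $z > 0$. The objective decomposes as $-\sum_{v \in V_1 \cup V_2} \log x_v = \bigl(-\sum_{v \in V_1} \log y_v\bigr) + \bigl(-\sum_{v \in V_2} \log z_v\bigr)$, a sum of a function of $y$ alone and a function of $z$ alone, so the minimum over the product domain $\STAB(G_1)_{>0} \times \STAB(G_2)_{>0}$ equals the sum of the minima over the two factors. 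This yields $|G| \cdot H(G) = |G_1| \cdot H(G_1) + |G_2| \cdot H(G_2)$.

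There is essentially no real obstacle here: the content is entirely the product identity for $\STAB$ invoked in the remark preceding the statement, together with the separability of the logarithmic objective. If anything, the step requiring the most care is making the identification $\R^{V_1 \cup V_2} \cong \R^{V_1} \times \R^{V_2}$ and the attendant "$x > 0$" condition precise, so that the min-of-a-separable-function-over-a-product argument goes through cleanly.
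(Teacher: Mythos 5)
Your proof is correct and follows exactly the route the paper indicates: the paper simply asserts that the result "follows directly from the fact that $\STAB(G_1 \cup G_2) = \STAB(G_1) \times \STAB(G_2)$," and you have filled in precisely that product identity plus the separation of the logarithmic objective. No gaps; this is the same argument, just written out in full.
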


For general graphs $G$, no complete linear description of $\STAB(G)$ is known. (In fact, the existence of a tractable description for all graphs $G$ would imply NP $=$ co-NP). Note however that we always have:

$$
\STAB(G) \subseteq \{x \in \mathbb{R}^V_{\geqslant 0} \mid \sum_{v \in K}x_v \leqslant 1 \ \text{ for all cliques } K \text{ of } G \}.
$$

It turns out that the reverse inclusion holds if and only if $G$ is a perfect graph, see Theorem~\ref{thm:Chvatal} below. Recall that a graph $G$ is perfect if $\chi(H) = \omega(H)$ for every induced subgraph $H$ of $G$, where $\omega(H)$ is the size of the largest clique of $H$ and $\chi(H)$ is the chromatic number of $H$. The reader can find more basic information on perfect graphs, e.g., in Diestel \cite{Di}. Later we will use the well-known fact that a graph $G$ is perfect if and only if its complement $\comp{G}$ is perfect.

\begin{theorem}[Chv\'atal \cite{Ch}]\label{thm:Chvatal} A graph $G=(V,E)$ is perfect if and only if 
$$
\STAB(G) = \{x \in \mathbb{R}^V_{\geqslant 0} \mid \sum_{v \in K}x_v \leqslant 1 \ \text{ for all cliques } K \text{ of } G \}.
$$
\end{theorem}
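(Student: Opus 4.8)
The right-hand side is the polytope $Q(G) := \{x \in \mathbb{R}^V_{\geqslant 0} : \sum_{v \in K} x_v \leqslant 1$ for every clique $K$ of $G\}$. One inclusion is automatic: a stable set meets every clique in at most one vertex, so $\chi^S \in Q(G)$ for each stable set $S$, and $Q(G)$ being convex, $\STAB(G) \subseteq Q(G)$ for every graph. So the plan is to prove (i) $G$ perfect $\Rightarrow Q(G) \subseteq \STAB(G)$, and (ii) $G$ not perfect $\Rightarrow \STAB(G) \subsetneq Q(G)$. For (ii) I would first observe that intersecting either polytope with a coordinate subspace $\{x : x_v = 0$ for all $v \notin W\}$ gives a copy of the analogous polytope of the induced subgraph $G[W]$ --- indeed the cliques of $G[W]$ are among the cliques of $G$, and the stable sets of $G[W]$ are exactly the stable sets of $G$ contained in $W$ --- so it is enough to exhibit one imperfect induced subgraph $H$ of $G$ with $\STAB(H) \neq Q(H)$.

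For (i) I would compare support functions. Both $\STAB(G)$ and $Q(G)$ are down-monotone polytopes containing the origin, hence each is determined by its support function on $\mathbb{R}^V_{\geqslant 0}$; so it suffices to prove $\max\{c^\top x : x \in Q(G)\} = \max\{c^\top x : x \in \STAB(G)\}$ for all $c \in \mathbb{R}^V_{\geqslant 0}$, and by scaling and continuity only for $c \in \mathbb{Z}^V_{\geqslant 0}$. The right side equals the maximum weight $\alpha_c(G) := \max\{\sum_{v \in S} c_v : S$ stable$\}$, and $\geqslant$ holds trivially. For the reverse, LP duality identifies $\max\{c^\top x : x \in Q(G)\}$ with the minimum of $\sum_K y_K$ over $y \geqslant 0$ satisfying $\sum_{K \ni v} y_K \geqslant c_v$ for all $v$, a fractional weighted clique cover. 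To produce an integral such cover of value $\alpha_c(G)$, form the blow-up $G[c]$: replace each $v$ by an independent set $B_v$ of $c_v$ copies, with $B_u$ completely joined to $B_v$ iff $uv \in E(G)$. By the replication lemma below, $G[c]$ is perfect, hence so is $\overline{G[c]}$ (the complement of a perfect graph is perfect, as recalled above), so $G[c]$ is covered by $\chi(\overline{G[c]}) = \omega(\overline{G[c]}) = \alpha(G[c]) = \alpha_c(G)$ cliques. Each clique of this cover meets every $B_v$ in at most one copy and projects to a clique of $G$; since the $c_v$ copies in $B_v$ are all covered, at least $c_v$ of these cliques project to cliques containing $v$, so letting $y_K$ be the number of cover cliques projecting to $K$ yields a dual-feasible solution of value $\alpha_c(G)$. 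Hence $\max\{c^\top x : x \in Q(G)\} \leqslant \alpha_c(G)$, as needed, and (i) follows.

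The replication lemma asserts: if $G$ is perfect and $v \in V(G)$, the graph $G'$ obtained by adding a new vertex $v'$ with $N_{G'}(v') = N_G(v)$ (so $v' \not\sim v$) is perfect. I would prove it by induction on $|V(G)|$, checking $\chi(H') = \omega(H')$ for every induced subgraph $H'$ of $G'$. If $H'$ omits $v'$, or omits $v$, it is isomorphic to an induced subgraph of $G$, so done. If $H'$ contains both $v$ and $v'$ but $H' \neq G'$, then $H'$ is the replication at $v$ of the perfect graph $H' - v'$, which has fewer vertices than $G$, so done by induction. For $H' = G'$: every clique of $G'$ uses at most one of $v,v'$, so swapping $v'$ for $v$ shows $\omega(G') = \omega(G)$; and an optimal colouring of $G$ (with $\chi(G) = \omega(G)$ colours) extends to $G'$ by giving $v'$ the colour of $v$ --- legitimate since $v'$ has $v$'s neighbourhood --- so $\chi(G') \leqslant \omega(G) = \omega(G')$, and equality follows.

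Finally (ii): let $H$ be a minimally imperfect induced subgraph of $G$ (not perfect, all proper induced subgraphs perfect) and $\omega := \omega(H)$. Then $\tfrac1\omega \mathbf{1} \in Q(H)$, since every clique $K$ of $H$ contributes $|K|/\omega \leqslant 1$. Were $\tfrac1\omega\mathbf{1} \in \STAB(H)$, writing it as $\sum_i \lambda_i \chi^{S_i}$ with the $S_i$ stable, $\lambda_i > 0$, $\sum_i \lambda_i = 1$, and summing all coordinates would give $|V(H)|/\omega = \sum_i \lambda_i |S_i| \leqslant \alpha(H)$, i.e.\ $|V(H)| \leqslant \alpha(H)\,\omega(H)$ --- contradicting Lovász's lemma that a minimally imperfect graph has strictly more than $\alpha\,\omega$ vertices. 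Thus $\tfrac1\omega\mathbf{1} \in Q(H) \setminus \STAB(H)$, and by the reduction above $\STAB(G) \neq Q(G)$. The substantive inputs are exactly the replication lemma (handled by the short induction above) and Lovász's lemma, which is the only genuinely non-trivial fact I would cite rather than reprove; everything else is LP duality plus the elementary fact that a down-monotone polytope through the origin is pinned down by its support function on the nonnegative orthant. I expect the fiddliest (but routine) point to be the bookkeeping around $G[c]$: verifying $\alpha(G[c]) = \alpha_c(G)$ and that the projected clique cover is indeed dual-feasible.
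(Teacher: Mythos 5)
The paper offers no proof of this statement at all --- it is quoted directly from Chv\'atal's paper [Ch] --- so there is no in-paper argument to compare against; judged on its own, your proof is correct and is essentially the classical Fulkerson--Lov\'asz--Chv\'atal argument. The hard direction (perfect $\Rightarrow$ the clique inequalities suffice), via support functions of down-monotone polytopes, LP duality, the blow-up $G[c]$ and the replication lemma, checks out in all details: the replication induction, the identity $\alpha(G[c])=\alpha_c(G)$, and the dual feasibility of the projected clique cover are all sound, and your appeal to ``the complement of a perfect graph is perfect'' is fair game since the paper itself declares that fact as known. The one substantive import is in the converse direction, which you rest on Lov\'asz's theorem that a minimally imperfect graph has more than $\alpha\omega$ vertices; that is a correct citation, but it is a genuinely heavy theorem (the engine of the weak perfect graph theorem) invoked for what is traditionally the easy direction. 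It can be avoided: your coordinate-subspace restriction already shows that $\STAB(G)=\{x\geqslant 0 : \sum_{v\in K}x_v\leqslant 1 \text{ for all cliques } K\}$ forces the same equality for every induced subgraph $H$; now let $c_v$ be the number of maximum cliques of $H$ containing $v$. Since $\tfrac{1}{\omega(H)}\mathbf{1}$ satisfies all clique inequalities, the maximum of $c^\top x$ over $\STAB(H)$ is at least the number of maximum cliques, so some stable set $S$ meets every maximum clique exactly once, and induction on $|V(H)|$ applied to $H-S$ gives $\chi(H)=\omega(H)$. With that replacement your argument would be self-contained up to exactly the facts the paper itself assumes; as written, it is correct but outsources one nontrivial lemma.
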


Assume that $G$ is perfect and consider the optimal solution $x^*$ to \eqref{eq:ent_def}. Let $y^*$ be the point with $y^*_v := \frac{1}{nx^*_v}$ for $v \in V$. By optimality of $x^*$, the inequality $\sum_{v \in V} y^*_v x_v \leqslant 1$ is valid for $\STAB(G)$. Then Theorem~\ref{thm:Chvatal} (together with Farkas's lemma) implies that $y^*$ is a convex combination of characteristic vectors of cliques of $G$. Thus $y^* \in \STAB(\comp{G})$. Now, since $x^* \in \STAB(G)$, the inequality $\sum_{v \in V} x^*_v y_v \leqslant 1$ is valid for $\STAB(\comp{G})$. Moreover, this inequality is tight at $y^*$, implying that $y^*$ is a locally optimal solution of \eqref{eq:ent_def} for $\comp{G}$. By convexity, $y^*$ is a globally optimal solution.

This argument implies in particular the following important result due to Csisz\'ar, K\"orner, Lov\'asz, Marton and Simonyi \cite{CKLMS}, which in fact can be turned into a characterization of perfect graphs by considering arbitrary probability distributions supported on $V$, see~\cite{Ko}:

\begin{theorem}[Csisz\'ar et al. \cite{CKLMS}] \label{PGH} For every $n$-vertex perfect graph $G$,
$$
H(G) + H(\comp{G}) = \log n.
$$
\end{theorem}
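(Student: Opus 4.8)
The plan is to exploit the variational (first-order optimality) characterization of the minimizer of a strictly convex differentiable function over a polytope, applied in parallel to the two instances of \eqref{eq:ent_def} for $G$ and for $\comp{G}$, and to link the two minimizers through the coordinatewise map $x \mapsto y$ given by $y_v := 1/(n\,x_v)$. Concretely, let $x^*$ be the (unique) optimal solution of \eqref{eq:ent_def} for $G$; by definition of the feasible region $x^* > 0$. The function $f_G(x) := -\tfrac1n\sum_{v} \log x_v$ is convex and differentiable on $\R^V_{>0}$ with $\partial f_G/\partial x_v = -1/(n\,x_v)$, so the first-order condition for its minimization over $\STAB(G)$ gives $\sum_{v} \tfrac{1}{n x^*_v}\,x_v \le \sum_{v} \tfrac{1}{n x^*_v}\,x^*_v = 1$ for every $x \in \STAB(G)$. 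Setting $y^*_v := 1/(n x^*_v)$, this says exactly that $\sum_v y^*_v x_v \le 1$ is a valid inequality for $\STAB(G)$, and it is tight at $x^*$.

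Next I would pass from this valid inequality to the membership $y^* \in \STAB(\comp{G})$. By Chv\'atal's theorem (Theorem~\ref{thm:Chvatal}), $\STAB(G) = \{x \ge 0 : \sum_{v \in K} x_v \le 1 \text{ for every clique } K \text{ of } G\}$, so linear-programming duality (Farkas's lemma) applied to $\max\{\sum_v y^*_v x_v : x \in \STAB(G)\} \le 1$ produces multipliers $\lambda_K \ge 0$, indexed by the cliques $K$ of $G$, with $\sum_K \lambda_K \le 1$ and $y^* \le \sum_K \lambda_K\,\chi^K$ coordinatewise. Testing this against $x^*$ and using both $\sum_v y^*_v x^*_v = 1$ and $\sum_{v \in K} x^*_v \le 1$ forces all the slack to vanish, so in fact $\sum_K \lambda_K = 1$ and $y^* = \sum_K \lambda_K\,\chi^K$. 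Hence $y^*$ is a convex combination of incidence vectors of cliques of $G$, that is, of stable sets of $\comp{G}$, so $y^* \in \STAB(\comp{G})$.

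It remains to recognize $y^*$ as the optimum of \eqref{eq:ent_def} for $\comp{G}$. Since $x^* \in \STAB(G)$ satisfies every clique inequality, for any $y = \sum_K \mu_K \chi^K \in \STAB(\comp{G})$ we get $\sum_v x^*_v y_v = \sum_K \mu_K \sum_{v\in K} x^*_v \le 1$, with equality at $y = y^*$. As $f_{\comp{G}}$ has $\partial f_{\comp{G}}/\partial y_v = -1/(n y^*_v) = -x^*_v$ at $y^*$, the inequality $\sum_v x^*_v y_v \le 1$ being valid on $\STAB(\comp{G})$ and tight at $y^*$ is precisely the first-order optimality condition there; by convexity $y^*$ is the global minimizer, so $H(\comp{G}) = f_{\comp{G}}(y^*)$. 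A one-line logarithm computation then finishes the argument: $H(\comp{G}) = -\tfrac1n\sum_v \log\tfrac{1}{n x^*_v} = \log n + \tfrac1n\sum_v \log x^*_v = \log n - H(G)$.

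The only delicate point is the middle step — upgrading ``$\sum_v y^*_v x_v \le 1$ is valid for $\STAB(G)$'' to the genuine membership $y^* \in \STAB(\comp{G})$. This is where perfectness enters, via Chv\'atal's polyhedral description, and the clean way to close it is to use the tightness at the strictly positive point $x^*$ to collapse the coordinatewise domination $y^* \le \sum_K \lambda_K\chi^K$ into an equality (alternatively one can invoke the anti-blocking duality between $\STAB(G)$ and $\STAB(\comp{G})$ for perfect $G$). Everything else is a routine verification of first-order conditions, together with the strict convexity and uniqueness of minimizers already recorded after \eqref{eq:ent_def}.
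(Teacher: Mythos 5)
Your argument is correct and follows essentially the same route as the paper: take the optimizer $x^*$ for $G$, use first-order optimality to get the valid inequality $\sum_v y^*_v x_v \leqslant 1$ with $y^*_v = 1/(n x^*_v)$, invoke Chv\'atal's theorem with Farkas/LP duality to place $y^*$ in $\STAB(\comp{G})$, and then verify optimality of $y^*$ for $\comp{G}$ via the reverse inequality tight at $y^*$. The only difference is that you spell out the duality step (the multipliers $\lambda_K$ and the collapse of slack using $x^* > 0$) which the paper leaves as a one-line remark.
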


We will make intensive use of the following theorem of K\"orner and Marton on the entropy of bipartite graphs, and also of the algorithm on which the proof is based. We describe their algorithm after stating the result.

\begin{theorem}[K\"orner and Marton \cite{KoMa}]\label{KM} Let $G$ be a $n$-vertex bipartite graph with bipartition $A \cup B$. Then one can find disjoint subsets $A_1, \ldots, A_k$ and $B_1, \ldots, B_k$ of $A$ and $B$ (respectively) with $A = A_1 \cup \cdots \cup A_k$ and $B = B_1 \cup \cdots \cup B_k$ such that 
\begin{equation}
\label{eq:KM_formula}
H(G) = \sum_{i=1}^k \frac{| A_i | + | B_i |}{n} \, h \left (\frac{| A_i |}{| A_i | + | B_i | } \right )
\end{equation}
with $h : [0,1] \to \mathbb{R}$ defined by $h(x) := -x \log x - (1-x) \log (1-x)$ for $x \in (0,1)$ and $h(0) = h(1) := 0$.
\end{theorem}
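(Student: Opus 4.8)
The plan is to describe the K\"orner--Marton algorithm explicitly and argue that the partition it produces yields a point of $\STAB(G)$ achieving the right-hand side of \eqref{eq:KM_formula}, and that no point of $\STAB(G)$ can do better. Throughout, write $G=(A\cup B,E)$, $n=|A\cup B|$. The algorithm is greedy and recursive on $|A\cup B|$: one looks for a maximal ``balanced'' piece, peels it off, and recurses on the rest. Concretely, I would consider the function that to a pair $(S,T)$ with $S\subseteq A$, $T\subseteq B$ assigns the ``local cost'' $(|S|+|T|)\,h(|S|/(|S|+|T|))$, and among all pairs $(A_1,B_1)$ such that $A_1$ is exactly the set of vertices of $A$ having no neighbour in $B\setminus B_1$ (equivalently, $N(B\setminus B_1)\subseteq A_1$, so that no edge joins $A_1$ to $B\setminus B_1$ and no edge joins $B_1$ to $A\setminus A_1$ other than through... ) pick the one minimizing the ratio $|A_1|/(|A_1|+|B_1|)$, with appropriate tie-breaking to make it maximal. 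Having removed $(A_1,B_1)$, the remaining graph on $(A\setminus A_1)\cup(B\setminus B_1)$ has no edges between $A_1\cup B_1$ and the rest, so by Proposition~\ref{DiG} its entropy adds up, and one recurses.

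The two things to prove are then: \emph{(i) achievability}, i.e.\ the vector $x$ defined blockwise by $x_v=|A_i|/(|A_i|+|B_i|)$ for $v\in B_i$ and $x_v=|B_i|/(|A_i|+|B_i|)$ for $v\in A_i$ lies in $\STAB(G)$ and has $f$-value equal to the claimed sum; and \emph{(ii) optimality}, i.e.\ for every $x\in\STAB(G)$ with $x>0$ we have $f(x)\geqslant\sum_i \frac{|A_i|+|B_i|}{n}h(|A_i|/(|A_i|+|B_i|))$. For (i), membership in $\STAB(G)$ reduces (since $G$ is bipartite, hence perfect) to checking the clique inequalities of Theorem~\ref{thm:Chvatal}; the only nontrivial cliques are edges $uv$, and for an edge $u\in A_i$, $v\in B_j$ one has $x_u+x_v\leqslant 1$, which by the structure of the peeling (edges only go ``forward'', i.e.\ $i\leqslant j$... one needs the monotonicity of the ratios produced by the greedy choice) follows from $|B_i|/(|A_i|+|B_i|)+|A_j|/(|A_j|+|B_j|)\leqslant 1$. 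The $f$-value computation is immediate from $-\frac{a}{a+b}\log\frac{a}{a+b}-\frac{b}{a+b}\log\frac{b}{a+b}=h(a/(a+b))$.

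For (ii), the natural route is a weak-duality / convexity argument paralleling the perfect-graph discussion preceding Theorem~\ref{PGH}: exhibit a feasible point $y\in\STAB(\comp{G})$ (a convex combination of cliques of $G$, i.e.\ of edges and singletons) such that $\sum_v x_v y_v\leqslant 1$ is the supporting inequality of $\STAB(G)$ at the candidate optimum, so that by convexity of $f$ the candidate is globally optimal; the weights of this clique combination come precisely from the block sizes $|A_i|,|B_i|$. I expect the main obstacle to be exactly the bookkeeping that makes the greedy peeling well-defined and produces blocks whose ratios are monotone along every edge — i.e.\ proving that the ``minimum-ratio maximal set'' is unique/canonical and that after removal the recursion genuinely decomposes the graph — since everything else (achievability, the $h$-identity, the duality certificate) is routine once that combinatorial structure is in hand. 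An alternative, cleaner for write-up, is to prove optimality directly by induction on $n$: peel off $(A_1,B_1)$ with the globally minimum ratio $p_1$, show any optimal $x^*$ must be constant ($=p_1$) on $B_1$ and ($=1-p_1$) on $A_1$ using the KKT conditions together with minimality of $p_1$, and then invoke Proposition~\ref{DiG} to finish.
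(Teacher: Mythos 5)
Your overall strategy---greedy peeling, a blockwise primal candidate in $\STAB(G)$, and a dual certificate in $\STAB(\comp{G})$ written as a convex combination of cliques, concluded by convexity---is the same as the paper's sketch, but as written the proposal has three concrete defects, and the third is not mere bookkeeping. (a) The greedy rule is reversed and garbled: the correct rule takes $A_1\subseteq A$ \emph{maximizing} $|A_1|/|N(A_1)|$ (inclusion-wise minimal among maximizers), with $B_1:=N(A_1)$; minimizing $|A_1|/(|A_1|+|B_1|)$ with a ``maximal'' tie-break yields a different partition for which \eqref{eq:KM_formula} is false. Take $A=\{a_1,a_2,a_3\}$, $B=\{b_1,b_2,b_3\}$ with $a_1,a_2$ adjacent only to $b_1$ and $a_3$ adjacent to $b_1,b_2,b_3$: the correct blocks are $(\{a_1,a_2\},\{b_1\})$ and $(\{a_3\},\{b_2,b_3\})$, giving $H(G)=h(1/3)\simeq 0.92$, while your rule returns the single block $(A,B)$ and the value $h(1/2)=1$. (Also ``equivalently $N(B\setminus B_1)\subseteq A_1$'' should be $N(B\setminus B_1)\cap A_1=\varnothing$.) (b) The blockwise values are swapped: to make $f(x)$ equal to the right-hand side of \eqref{eq:KM_formula} you must set $x_v=|A_i|/(|A_i|+|B_i|)$ for $v\in A_i$ and $x_v=|B_i|/(|A_i|+|B_i|)$ for $v\in B_i$; with your assignment each block contributes the cross term $-p\log(1-p)-(1-p)\log p$ rather than $h(p)$, and in the example above your vector has $x_{a_3}+x_{b_1}=4/3>1$, so it is not even in $\STAB(G)$.

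(c) The claim that after removing $(A_1,B_1)$ ``there are no edges between $A_1\cup B_1$ and the rest'' is false: the construction only gives the one-sided separation $N(A_1)\subseteq B_1$, and edges from $B_1$ to $A\setminus A_1$ do occur (the edge $a_3b_1$ above). Hence Proposition~\ref{DiG} does not apply, neither in your recursion nor in your alternative induction whose last step is to ``invoke Proposition~\ref{DiG} to finish''. This is the crux, not a side issue: additivity of the entropy over blocks that are only one-way separated is exactly the content of the theorem. What actually closes the argument is, for the primal, the monotonicity $|A_1|/|B_1|\geqslant|A_2|/|B_2|\geqslant\cdots$ together with the fact that cross edges go only from later $A$-blocks to earlier $B$-blocks, which yields $x_u+x_v\leqslant 1$ on every edge; and, for the dual, writing $y^*_v=1/(n x^*_v)$ as a convex combination of edges (and singletons for isolated vertices) requires, inside each block, a fractional edge cover giving each vertex of $A_i$ weight proportional to $1/|A_i|$ and each vertex of $B_i$ weight proportional to $1/|B_i|$; this transportation problem is feasible precisely because ratio-maximality of the greedy choice gives the Hall-type condition $|S|/|N(S)|\leqslant|A_i|/|B_i|$ for all $S\subseteq A_i$ in the residual graph. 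You correctly sense that the work lies in this combinatorial structure, but the proposal states the wrong greedy invariant, supplies neither of these two steps, and replaces them with a disconnection claim that does not hold.
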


In their paper \cite{KoMa}, K\"orner and Marton gave the following algorithm to find pairs $A_i, B_i$ as in Theorem \ref{KM}. For simplicity, we assume first that $G$ has no isolated vertex. Let $A_1$ be a subset of $A$ maximizing the ratio $|A_1|/|B_1|$ where $B_1 := N(A_1)$ is the set of neighbors of $A_1$. Furthermore, choose $A_1$ inclusion-wise minimal with this property. Now iterate this on the graph $G - A_1 - B_1$ with bipartition $(A-A_1) \cup (B-B_1)$ to have the pair $A_2, B_2$, and so on until $A-A_1-\ldots-A_{i-1}$ is empty (in which case $B-B_1-\ldots-B_{i-1}$ is empty, too). 

In case $G$ has isolated vertices, then the first pairs $A_i, B_i$ are of the form $\{a\}, \varnothing$ where $a \in A$ is isolated in $G$, with ratio $|A_i| / |B_i| = +\infty$. The algorithm stops whenever $A-A_1-\ldots-A_{i-1} $ is empty. It may be that $B-B_1-\ldots-B_{i-1}$ is not empty, but then it consists of vertices that are isolated in the initial graph $G$. These are collected in further pairs $A_i, B_i$ of the form $\varnothing, \{b\}$.

We refer to the algorithm described in the two last paragraphs as the \emph{KM algorithm} (for K\"orner and Marton).

\begin{lemma}
\label{lem:KM_connected_pairs}
Let $G$ be a bipartite graph and $A_i, B_i$ for $i = 1, \ldots, k$ denote the pairs constructed by the KM algorithm. Then $G[A_i \cup B_i]$ is connected for all $i$.
\end{lemma}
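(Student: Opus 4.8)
The plan is to induct on the number of pairs $k$ produced by the KM algorithm, or equivalently to argue directly that the very first pair $A_1, B_1$ induces a connected subgraph and then invoke the fact that each subsequent pair is the first pair produced by the algorithm run on the smaller graph $G - A_1 - B_1$. So it suffices to show: if $A_1 \subseteq A$ is chosen to maximize the ratio $|A_1| / |N(A_1)|$ and is inclusion-wise minimal with this property (and $G$ has no isolated vertices, the isolated-vertex pairs being trivially connected as singletons), then $G[A_1 \cup B_1]$ with $B_1 = N(A_1)$ is connected.

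The key step is the following extremality argument. Suppose for contradiction that $G[A_1 \cup B_1]$ is disconnected; write it as a disjoint union of two nonempty parts $G[A' \cup B']$ and $G[A'' \cup B'']$ with $A' \cup A'' = A_1$, $B' \cup B'' = B_1$, and no edges between the two parts. Since there are no edges between them and $B_1 = N(A_1)$, we get $N(A') \subseteq B'$ and $N(A'') \subseteq B''$; combined with $B' \cup B'' = B_1 = N(A_1) = N(A') \cup N(A'')$ and disjointness, in fact $N(A') = B'$ and $N(A'') = B''$. Now the ratio $|A_1|/|B_1| = (|A'| + |A''|)/(|B'| + |B''|)$ is a mediant of the two ratios $|A'|/|B'|$ and $|A''|/|B''|$, so it lies (weakly) between them; hence at least one of the two parts, say $A'$, satisfies $|A'|/|N(A')| = |A'|/|B'| \geqslant |A_1|/|B_1|$. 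If $B''$ is nonempty, then $A'$ is a proper subset of $A_1$ achieving a ratio at least as large, contradicting the inclusion-wise minimality of $A_1$. If $B'' = \varnothing$ then $A''$ consists of vertices with no neighbors, contradicting the assumption that $G$ has no isolated vertex (and if instead $A'' = \varnothing$ we relabel). Either way we reach a contradiction, so $G[A_1 \cup B_1]$ is connected.

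I expect the main obstacle to be bookkeeping around degenerate cases rather than any real mathematical difficulty: one must be careful that "inclusion-wise minimal maximizer of the ratio" genuinely forbids a proper subset with an equal ratio, handle the possibility that one of the two sides of the disconnection has an empty $A$-part or empty $B$-part, and treat isolated vertices separately (for those, $A_i \cup B_i$ is a single vertex, trivially connected). Once the base case is established, the inductive step is immediate: $G[A_i \cup B_i]$ is the first KM pair of the bipartite graph $G - A_1 - \cdots - A_{i-1} - B_1 - \cdots - B_{i-1}$, to which the base case applies verbatim. A small point worth double-checking is that the mediant inequality is used correctly when some denominator could be zero; restricting the disconnection analysis to the no-isolated-vertex case makes all the relevant $|B'|, |B''|$ positive and keeps every ratio finite.
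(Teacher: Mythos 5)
Your argument is correct and is essentially the paper's own proof: assume $G[A_1\cup B_1]$ is disconnected, split $A_1$ into two parts with disjoint neighborhoods, apply the mediant inequality $\frac{|A'|+|A''|}{|B'|+|B''|}\leqslant\max\left\{\frac{|A'|}{|B'|},\frac{|A''|}{|B''|}\right\}$, and contradict the inclusion-wise minimality of the maximizer. The paper runs the same argument directly at index $i$ (the residual graph), while you phrase it as a base case plus the recursive structure of the KM algorithm and spell out the degenerate/isolated-vertex cases; this is only a difference in bookkeeping, not in substance.
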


\begin{proof}
If $G[A_i \cup B_i]$ is not connected, then $A_i$ is the disjoint union of two subsets $A_i^1$ and $A_i^2$ with disjoint neighborhoods $B_i^1$ and $B_i^2$ respectively, in the graph $G-A_1-B_1-\ldots-A_{i-1}-B_{i-1}$. Then
$$
\frac{|A_i|}{|B_i|} = \frac{|A_i^1| + |A_i^2|}{|B_i^1| + |B_i^2|} \leqslant \max \left\{ \frac{|A^1_i|}{|B^1_i|}, \frac{|A^2_i|}{|B^2_i|}\right\},
$$
contradicting the fact that $A_i$ was chosen inclusion-wise minimal among the sets with $|A_i|/|B_i|$ maximum.
\end{proof}

Now, we sketch a proof of Theorem~\ref{KM} based on the KM algorithm. First, consider the point $x^* \in \mathrm{STAB}(G)$ given by 
$$
x^*_u = \frac{|A_i|}{|A_i| + |B_i|} \quad \text{if}\ u \in A_i \quad \text{and} \quad x^*_v = \frac{|B_i|}{|A_i| + |B_i|} \quad \text{if}\ v \in B_i.
$$

Then, represent each vertex of $G$ by a rectangle of width $x^*_v$, height $y^*_v := \frac{1}{nx^*_v}$ and thus area $\frac{1}{n}$. Arrange the $n$ rectangles into a (perfect) packing of the unit square, as illustrated on Figure~\ref{fig:KM_packing}. Since the graph $G$ has no edge from $A_i$ to $B_j$ and $|A_{i}|/|B_{i}| \geqslant |A_{j}|/|B_{j}|$ whenever $i < j$, we have $x^*_u + x^*_v \leqslant 1$ for all $uv \in E$ and hence $x^* \in \STAB(G)$. Proving that $y^* \in \STAB(\comp{G})$ requires a bit more work, but notice that we at least have $\sum_{v \in K} y^*_v \leqslant 1$ for all cliques $K$ of $\comp{G}$ corresponding to rectangles meeting a common vertical. By Theorem~\ref{PGH}, both $x^*$ and $y^*$ are optimal solutions to their respective minimization problems and thus \eqref{eq:KM_formula} holds.

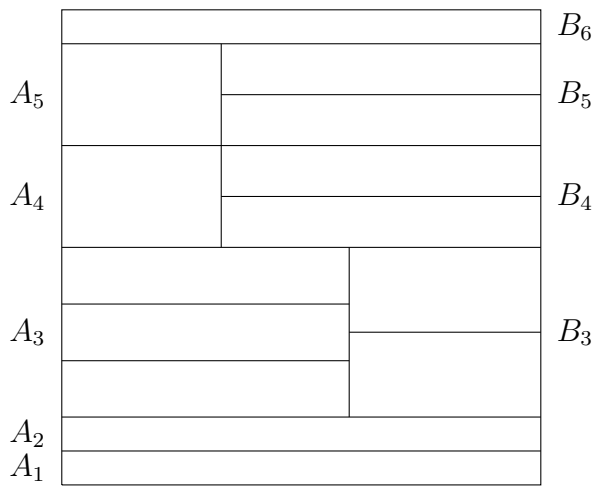
\begin{figure}[ht]
\begin{center}
\begin{tikzpicture}[scale=0.45]
\draw (0,0) -- (14,0) -- (14,14) -- (0,14) -- (0,0);
\draw (0,1) -- (14,1);
\draw (0,2) -- (14,2);
\draw (0,7) -- (14,7);
\draw (0,10) -- (14,10);
\draw (0,13) -- (14,13);
\draw (8.4,2) -- (8.4,7);
\draw (4.66,7) -- (4.66,10);
\draw (4.66,10) -- (4.66,13);
\draw (0,3.66) -- (8.4,3.66); 
\draw (0,5.33) -- (8.4,5.33); 
\draw (8.4,4.5) -- (14,4.5); 
\draw (4.66,8.5) -- (14,8.5); 
\draw (4.66,11.5) -- (14,11.5); 
\node at (-1,0.5) {$A_1$};
\node at (-1,1.5) {$A_2$};
\node at (-1,4.5) {$A_3$};
\node at (-1,8.5) {$A_4$};
\node at (-1,11.5) {$A_5$};
\node at (15,4.5) {$B_3$};
\node at (15,8.5) {$B_4$};
\node at (15,11.5) {$B_5$};
\node at (15,13.5) {$B_6$};
\end{tikzpicture}
\end{center}
\caption{Illustration of the KM algorithm.}
\label{fig:KM_packing}
\end{figure}

\section{Poset Entropy} \label{sec:poset_entropy}

If $P = (X,\leqslant)$ is a finite poset, the entropy of $P$ is defined to be the entropy of its comparability graph $\compG{P}$. We will write this $\ent{P}$. The entropy of the incomparability graph $\incompG{P}$ of $P$ is written $\cent{P}$. 

We insist on the fact that, in this paper, $\ent{P}$ denotes the (K\"orner) entropy of the poset $P$ and not the Shannon entropy of a probability distribution.

Now, we give an equivalent and more intuitive definition of $\ent{P}$ due to Cardinal et al.~\cite{CaSa}. A collection $\{(y_{v^-}, y_{v^{+}})\}_{v \in X}$ of open intervals contained in $(0,1)$ is called \emph{consistent} with $P$ if the associated interval order is an extension of $\leqslant$, that is, if $v < w$ in $P$ implies $y_{v^{+}} \leqslant y_{w^{-}}$ or in other words the interval for $v$ is entirely to the left of the interval for $w$. If $\mathcal{I}(P)$ denotes the set of all these collections of intervals then we have the following result.

\begin{theorem}[Cardinal et al.~\cite{CaSa}]\label{PI} If $P = (X,\leqslant)$ is a poset of order $n$ then
\begin{equation}
\label{eq:ent_intervals}
\ent{P} = \min \left\{-\frac{1}{n} \sum_{v \in X} \log x_v \mid \exists \{(y_{v^-}, y_{v^{+}})\}_{v \in X} \in \mathcal{I}(P) \text{ with } x_v = y_{v^+} - y_{v^-} \forall v \in X\right\}.
\end{equation}
\end{theorem}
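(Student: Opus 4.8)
The plan is to prove the two inequalities $\ent{P} = H(\compG{P}) \leqslant \mu$ and $\mu \leqslant H(\compG{P})$, where $\mu$ denotes the right-hand side of \eqref{eq:ent_intervals}, separately. Both directions rest on the classical fact that comparability graphs are perfect (see, e.g., \cite{Di}), so that by Theorem~\ref{thm:Chvatal} the polytope $\STAB(\compG{P})$ is exactly the set of $x \in \R^X_{\geqslant 0}$ satisfying $\sum_{v \in K} x_v \leqslant 1$ for every clique $K$ of $\compG{P}$, equivalently for every chain $K$ of $P$.

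For $\ent{P} \leqslant \mu$, take any consistent family $\{(y_{v^-}, y_{v^+})\}_{v \in X} \in \mathcal I(P)$ and set $x_v := y_{v^+} - y_{v^-} > 0$. For a chain $v_1 < \dots < v_m$ of $P$, consistency forces $0 \leqslant y_{v_1^-} < y_{v_1^+} \leqslant y_{v_2^-} < \dots < y_{v_m^+} \leqslant 1$, which telescopes to $\sum_{i=1}^m x_{v_i} \leqslant y_{v_m^+} - y_{v_1^-} \leqslant 1$. Thus $x$ satisfies every clique inequality of $\compG{P}$, so $x \in \STAB(\compG{P})$ by Theorem~\ref{thm:Chvatal}; since $x > 0$, the definition \eqref{eq:ent_def} yields $\ent{P} \leqslant -\tfrac1n \sum_v \log x_v$, and taking the infimum over all consistent families gives $\ent{P} \leqslant \mu$.

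For $\mu \leqslant \ent{P}$, I would start from the unique, and hence strictly positive, optimal solution $x^*$ of \eqref{eq:ent_def} for $G = \compG{P}$, and build a consistent family with interval lengths exactly $x^*_v$ by a greedy left-packing: set
$$ y_{v^-} := \max\Big\{ \textstyle\sum_{w \in C} x^*_w \ : \ C \text{ a chain of } P \text{ with } w < v \text{ for all } w \in C \Big\}, \qquad y_{v^+} := y_{v^-} + x^*_v, $$
where the empty chain contributes $0$, so $y_{v^-} \geqslant 0$. If $v < w$ in $P$, then for any chain $C$ below $v$ the set $C \cup \{v\}$ is a chain below $w$, whence $y_{w^-} \geqslant \sum_{u \in C} x^*_u + x^*_v$; maximizing over $C$ gives $y_{w^-} \geqslant y_{v^+}$, so the family is consistent. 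Moreover $y_{v^+}$ equals the maximum of $\sum_{u \in C'} x^*_u$ over chains $C'$ of $P$ whose largest element is $v$; each such $C'$ is a clique of $\compG{P}$, so the clique inequality gives $y_{v^+} \leqslant 1$, and every interval sits inside $(0,1)$. Since $y_{v^+} - y_{v^-} = x^*_v$, this family witnesses $\mu \leqslant -\tfrac1n \sum_v \log x^*_v = \ent{P}$, and it also shows that the infimum is attained, which justifies writing ``$\min$''.

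The only genuinely delicate point is the second direction: the greedy construction must simultaneously respect \emph{every} order relation of $P$ (the ``prepend $v$'' argument) and keep \emph{all} intervals inside $(0,1)$ (this is exactly where the clique/chain inequalities valid on $\STAB(\compG{P})$, i.e.\ perfectness via Theorem~\ref{thm:Chvatal}, come in). The rest is routine bookkeeping.
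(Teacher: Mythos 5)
Your proposal is correct, and in fact the paper offers no proof to compare it with: Theorem~\ref{PI} is imported from Cardinal et al.~\cite{CaSa} as a black box, so your argument fills in a proof rather than duplicating one. The two directions are sound: the telescoping of a consistent family along a chain shows the lengths satisfy all clique (= chain) inequalities, and since comparability graphs are perfect, Chv\'atal's Theorem~\ref{thm:Chvatal} upgrades this to membership in $\STAB(\compG{P})$, giving $\ent{P}\leqslant\mu$; conversely, the greedy left-packing $y_{v^-}=\max_C\sum_{w\in C}x^*_w$ over chains $C$ strictly below $v$ builds a consistent family with lengths $x^*_v$, and the bound $y_{v^+}\leqslant 1$ follows from the chain inequalities being valid at $x^*\in\STAB(\compG{P})$, so $\mu\leqslant\ent{P}$ and the minimum is attained. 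One small point of attribution: in the second direction you invoke perfectness via Theorem~\ref{thm:Chvatal}, but what you actually use there is only the easy inclusion $\STAB(G)\subseteq\{x\geqslant 0:\sum_{v\in K}x_v\leqslant 1 \text{ for all cliques }K\}$, which the paper notes holds for \emph{every} graph; perfectness (and the classical fact that $\compG{P}$ is perfect) is genuinely needed only in the first direction, to pass from the chain inequalities back into $\STAB(\compG{P})$. This is consistent with the machinery the paper sets up (and with the spirit of the proof of Lemma~\ref{lem:unicity_intervals}, which likewise exploits tight chains), so your route fits the paper's framework naturally.
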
  

It turns out that not only the lengths $x_v$ of the intervals in an optimal solution to \eqref{eq:ent_intervals} are unique, but also the intervals themselves.

\begin{lemma} \label{lem:unicity_intervals}
The collection of intervals $\{(y^*_{v^-},y^*_{v^+})\}_{v \in X} \in \mathcal{I}(P)$ giving the minimum in \eqref{eq:ent_intervals} is unique.
\end{lemma}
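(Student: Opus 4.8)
The plan is to leverage the strict convexity of the objective function in \eqref{eq:ent_intervals} together with the geometric structure of consistent interval collections. We already know from Theorem~\ref{PI} (and the discussion of \eqref{eq:ent_def}) that the vector of interval lengths $(x_v^*)_{v \in X}$ achieving the minimum is unique, since it equals the unique minimizer of the strictly convex function $f$ over $\STAB(\compG{P}) \cap \R^X_{>0}$. So the content of the lemma is that \emph{given} these lengths, the left endpoints $y^*_{v^-}$ (equivalently the right endpoints $y^*_{v^+} = y^*_{v^-} + x^*_v$) are also forced. First I would fix the optimal length vector $x^* = (x^*_v)_{v\in X}$ and consider the set $\mathcal{I}^*(P) \subseteq \mathcal{I}(P)$ of all consistent interval collections realizing exactly these lengths; the goal is to show $|\mathcal{I}^*(P)| = 1$.

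The key step is a ``push-left'' (or canonical left-justification) argument. Starting from any collection $\{(y_{v^-},y_{v^+})\}_{v\in X} \in \mathcal{I}^*(P)$, I would define a new collection by setting $\tilde y_{v^-} := \max\{\, y_{w^+} : w < v \text{ in } P\,\}$ (and $\tilde y_{v^-} := 0$ if $v$ is minimal), processing vertices in a linear extension order so that the right endpoints $\tilde y_{w^+} = \tilde y_{w^-} + x^*_w$ are already defined when we reach $v$. One checks that: (i) the resulting intervals still lie in $(0,1)$ — here one uses that the original collection already fits in $(0,1)$ and that we only moved intervals leftward, so lengths are preserved and no interval can be pushed past the left end more than the original was; (ii) the new collection is still consistent with $P$, since we defined each left endpoint to be exactly at (or beyond) the rightmost right-endpoint of a predecessor; and (iii) this ``left-justified'' collection depends only on $P$ and $x^*$, not on the collection we started from. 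Hence every element of $\mathcal{I}^*(P)$ equals its own left-justification — but this requires an extra argument, because a priori the left-justification of a collection need not coincide with the collection itself.

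To close that gap I would invoke a symmetry/interpolation argument rather than trying to show directly that optimal collections are left-justified. Given two optimal collections $I^{(0)}, I^{(1)} \in \mathcal{I}^*(P)$, both have the same length vector $x^*$, and by Theorem~\ref{PI} both are optimal for \eqref{eq:ent_intervals}. Now observe $\mathcal{I}(P)$ is ``interval-convex'' in the endpoint variables: the pointwise convex combination $I^{(t)} := (1-t) I^{(0)} + t I^{(1)}$ (taking the same convex combination of all left endpoints, hence automatically of all right endpoints) is again a consistent collection — each constraint $y_{v^+} \le y_{w^-}$ for $v < w$ is linear, hence preserved — and it still has every interval length equal to $x^*_v$, since $(1-t)x^*_v + t x^*_v = x^*_v$. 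Thus the whole segment $I^{(t)}$ lies in $\mathcal{I}^*(P)$ and in particular is optimal for all $t \in [0,1]$. Combined with the left-justification map from the previous paragraph — which is the identity on at least one point of $\mathcal{I}^*(P)$ (namely, apply it once; the image is left-justified and lies in $\mathcal{I}^*(P)$, and applying it again fixes it), and which I claim is an affine contraction ``toward'' the left-justified configuration — one deduces that $\mathcal{I}^*(P)$ is a single point.

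The main obstacle, and the step I would be most careful about, is (iii): showing that the left-justification of an optimal collection is again optimal and in fact equal to it, i.e.\ that optimal solutions cannot have ``slack'' between an interval and its predecessors. The clean way to see this: if some interval $I_v$ has $y_{v^-} > \max\{y_{w^+} : w < v\}$ strictly (and also $y_{v^-} > 0$ if $v$ is minimal), then there is room to slide $I_v$ and all intervals that are forced to its left slightly toward $0$ while keeping all lengths fixed, or alternatively one can \emph{lengthen} $I_v$ slightly (and shrink nothing) to strictly decrease $-\frac1n\sum_v \log x_v$, contradicting optimality — so optimal collections are automatically left-justified, which makes them equal to the canonical collection determined by $P$ and $x^*$ alone. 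One has to handle the endpoint bookkeeping (the intervals must stay inside the open interval $(0,1)$, and the objective only involves lengths so any length increase is strictly beneficial) carefully, but this is the heart of the uniqueness. I expect the write-up to be short once this ``no slack'' observation is phrased correctly.
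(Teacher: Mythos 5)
Your core argument is correct, but it takes a genuinely different route from the paper. The paper proves uniqueness via duality: from the optimal length vector $x^*$ it forms $z^*_v = \frac{1}{n x^*_v}$, uses Chv\'atal's theorem (perfection of $\compG{P}$) to write $z^*$ as a convex combination of chains of $P$, observes that each chain $C$ in this combination is tight, $\sum_{v\in C} x^*_v = 1$, so the intervals of $C$ must be consecutive and span $(0,1)$, which pins down their endpoints; since $\supp z^* = X$, every element lies in such a tight chain and all endpoints are forced. You instead argue locally: the lengths are unique by strict convexity, any optimal collection must be ``left-justified'' (if $y_{v^-}$ strictly exceeds $\max\{y_{w^+} : w<v\}$ and $0$, you can lower $y_{v^-}$, strictly increasing $x_v$ while preserving consistency --- incomparable pairs impose no constraint and intervals with left endpoint $0$ are still contained in $(0,1)$ --- contradicting optimality of the lengths), and a left-justified collection is determined by its lengths by induction along a linear extension. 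That closing ``no slack'' argument is complete on its own and is more elementary than the paper's, needing no perfect-graph machinery; what the paper's proof buys in exchange is the explicit tight-chain structure, which matches the rectangle-packing picture exploited later. Note that your middle paragraph (convex interpolation of two optimal collections and the claimed ``affine contraction'' toward the left-justified configuration) is never actually needed and is not justified as stated; I would drop it and keep only the strict-convexity step, the no-slack perturbation, and the inductive reconstruction of endpoints from lengths.
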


\begin{proof}
Let $x^*_v$ denote the length of the interval for $v \in X$ in any optimal solution to \eqref{eq:ent_intervals}. We know that $x^* \in \STAB(\compG{P})$ and is unique. We have to prove that the lengths $x^*_v$ determine the intervals. To see this define $z^* \in \STAB(\incompG{P})$ by letting $z^*_v = \frac{1}{nx^*_v}$ as in the discussion after Theorem~\ref{thm:Chvatal}. Recall that the inequality $\sum_{v \in X} z^*_v x_v \leqslant 1$ is valid for $\STAB(\compG{P})$ and thus $z^*$ is a convex combination of cliques of $\compG{P}$, that is, of chains of $P$. For each of these chains $C$, we have  $\sum_{v \in C} x^*_v = 1$. In the collection of intervals $\{(y^*_{v^-},y^*_{v^+})\}_{v \in X}$, the chain $C$ is thus formed of consecutive intervals spanning the whole interval $(0,1)$. Therefore we can infer the endpoints of each of the intervals in the chain directly from their lengths. Since the support of $z^*$ is $X$, every element $v$ is contained in such a tight chain $C$. The result follows.
\end{proof}

Following Lemma~\ref{lem:unicity_intervals}, we denote $I(P)$ \emph{the} interval order represented by the optimal collection of intervals for $P$. The collection $\{(y^*_{v^-},y^*_{v^+})\}_{v \in X}$ is called the \emph{canonical} interval representation of $I(P)$.

The following lemma is a direct consequence of the definition of $I(P)$.

\begin{lemma} If $I(P)$ is the interval order represented by the optimal collection of intervals for $P$ then:

\begin{enumerate}[(i)]
\item the poset $I(P)$ is an extension of $P$;
\item the graph  $\incompG{I(P)}$ is a subgraph of $\incompG{P}$;
\item we have  $\cent{P} = \cent{I(P)}$.
\end{enumerate}
\end{lemma}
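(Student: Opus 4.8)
The plan is to handle the three items in order, essentially by unwinding the definition of $I(P)$; only item (iii) needs an extra ingredient.

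\emph{Items (i) and (ii).} By construction $I(P)$ is the interval order associated with an optimal collection $\{(y^*_{v^-},y^*_{v^+})\}_{v\in X}\in\mathcal{I}(P)$ in the minimization \eqref{eq:ent_intervals} (Theorem~\ref{PI}). By the very definition of $\mathcal{I}(P)$ this collection is \emph{consistent} with $P$, which is exactly the statement that its associated interval order, namely $I(P)$, extends $\leqslant$; this is (i). For (ii), $I(P)$ is an extension of $P$ on the common ground set $X$, so every comparable pair of $P$ is comparable in $I(P)$, i.e.\ $\compG{P}$ is a spanning subgraph of $\compG{I(P)}$; complementing over the vertex set $X$ shows $\incompG{I(P)}$ is a spanning subgraph of $\incompG{P}$.

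\emph{Item (iii).} I would first prove $\ent{P}=\ent{I(P)}$ and then pass to the complement via Theorem~\ref{PGH}. The inequality $\ent{I(P)}\leqslant\ent{P}$ holds because the optimal collection for $P$ has associated interval order exactly $I(P)$, hence it is consistent with $I(P)$ (any interval order extends itself) and so lies in $\mathcal{I}(I(P))$; plugging it into \eqref{eq:ent_intervals} written for $I(P)$ yields objective value $\ent{P}$, so the minimum $\ent{I(P)}$ cannot exceed $\ent{P}$. For the reverse inequality, since $I(P)$ extends $P$ every collection consistent with $I(P)$ is a fortiori consistent with $P$, so $\mathcal{I}(I(P))\subseteq\mathcal{I}(P)$ and the minimum in \eqref{eq:ent_intervals} for $I(P)$ is taken over a subfamily of that for $P$, giving $\ent{I(P)}\geqslant\ent{P}$. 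Hence $\ent{I(P)}=\ent{P}$. Finally, $\compG{P}$ and $\compG{I(P)}$ are perfect graphs (comparability graphs are perfect) on $n:=|X|$ vertices, so Theorem~\ref{PGH} applied to each gives $\cent{P}=\log n-\ent{P}=\log n-\ent{I(P)}=\cent{I(P)}$.

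I do not expect a genuine obstacle — the lemma is a bookkeeping consequence of the set-up — but the one point that requires care is in (iii): one must use the \emph{optimality} of the collection defining $I(P)$, since merely knowing that $I(P)$ is \emph{some} extension of $P$ yields, by monotonicity of graph entropy under adding edges, only $\ent{P}\leqslant\ent{I(P)}$ and $\cent{I(P)}\leqslant\cent{P}$, i.e.\ the halves that do not by themselves close the loop; and then one must invoke the complementation identity of Theorem~\ref{PGH} on \emph{both} comparability graphs, which is legitimate because both are perfect and have the same number of vertices.
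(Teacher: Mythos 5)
Your proposal is correct and follows essentially the same route as the paper: (i) from consistency of the optimal collection, (ii) by complementing the inclusion of comparability graphs, and (iii) by combining the monotonicity $\ent{P}\leqslant\ent{I(P)}$ (since $I(P)$ extends $P$) with the observation that the optimal collection for $P$ is itself consistent with $I(P)$, then transferring to the incomparability graphs. Your explicit appeal to Theorem~\ref{PGH} for the last step is merely a spelled-out version of what the paper leaves implicit.
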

\begin{proof} 
The first assertion is obvious by definition of $I(P)$. The second one follows from the first one. For the last assertion, let $\{(y^*_{v^-},y^*_{v^+})\}_{v \in X}$ be the canonical interval representation of $I(P)$, where $X$ is the ground set of $P$. Since $I(P)$ is an extension of $P$, we have $\ent{P} \leqslant \ent{I(P)}$. Furthermore, by definition, the collection of intervals $\{(y^*_{v^-},y^*_{v^+})\}_{v \in X}$ gives the optimum in \eqref{eq:ent_intervals} and is at the same time consistent for $I(P)$. Thus $\ent{P} = \ent{I(P)}$
and $\cent{P} = \cent{I(P)}$.
\end{proof}

Hence, to prove Theorem \ref{thm:width-2}, it is tempting to work with $I(P)$ rather than $P$. Indeed, we have $\cent{P} = \cent{I(P)}$ and $\incompG{I(P)}$ has more structure than $\incompG{P}$: for instance, it is an interval graph. However, it turns out that the number of connected components of $\incompG{I(P)}$ and of $\incompG{P}$ may be different, and so $\kappa_2(P) \neq \kappa_2(I(P))$ in general. This we now explain with an example.

\begin{Ex}\label{example_1} Consider the poset $P = (\{a,b,c,d,e,f\}, \leqslant)$ whose incomparability graph is a path on $6$ vertices, see Figure \ref{fig_ex_1}. 

\begin{figure}[ht]
\begin{center}
\begin{tikzpicture}[inner sep=2.5pt]

\filldraw[black] (0,0) circle (1.5pt)
(0,1) circle (1.5 pt) 
(0,2) circle (1.5 pt)
(1,0) circle (1.5 pt) 
(1,1) circle (1.5 pt)
(1,2) circle (1.5 pt);

\draw (0,0) -- (0,2);
\draw (1,0) -- (1,2) -- (0,1);
\draw (0,0) -- (1,1);

\node at (-.2,0) {$a$};
\node at (-.2,1) {$b$};
\node at (-.2,2) {$c$};
\node at (1.2,2) {$f$};
\node at (1.2,1) {$e$};
\node at (1.2,0) {$d$};

\tikzstyle{vtx}=[circle,draw,thick,fill=gray!25]
\node[vtx] (a) at (4,0) {};
\node[vtx] (d) at (5,1) {};
\node[vtx] (b) at (6,0) {};
\node[vtx] (e) at (7,1) {};
\node[vtx] (c) at (8,0) {};
\node[vtx] (f) at (9,1) {};

\draw (a.south) node [below] {$a$};
\draw (b.south) node [below] {$b$};
\draw (c.south) node [below] {$c$};
\draw (d.north) node [above] {$d$};
\draw (e.north) node [above] {$e$};
\draw (f.north) node [above] {$f$};

\node at (.5,-1) {$P$};

\node at (6.5,-1) {$\incompG{P}$};

\draw (a) -- (d) -- (b) -- (e) -- (c) -- (f);

\node[vtx] (a) at (11,0) {};
\node[vtx] (d) at (11,1) {};
\node[vtx] (b) at (12,0) {};
\node[vtx] (e) at (12,1) {};
\node[vtx] (c) at (13,0) {};
\node[vtx] (f) at (13,1) {};

\draw (a) -- (d);
\draw (b) -- (e);
\draw (c) -- (f);

\node at (12,-1) {$\incompG{I(P)}$};

\draw (a.south) node [below] {$a$};
\draw (b.south) node [below] {$b$};
\draw (c.south) node [below] {$c$};
\draw (d.north) node [above] {$d$};
\draw (e.north) node [above] {$e$};
\draw (f.north) node [above] {$f$};

\end{tikzpicture}
\end{center}
\caption{A poset on six elements whose incomparability graph is a path.}
\label{fig_ex_1}
\end{figure}
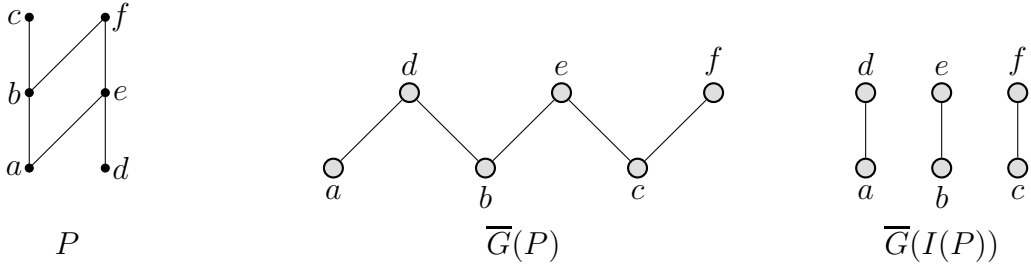

Then $\incompG{P}$ is bipartite with bipartition $A = \{a,b,c\}$, $B = \{d,e,f\}$ and a straightforward application of the KM algorithm gives us $\cent{P} = 1$ with $k=3$, $A_1 = \{a\}$, $B_1 = \{d\}$, $A_2 = \{b\}$, $B_2 = \{e\}$, $A_3 = \{c\}$ and $B_3 = \{f\}$. Notice that Theorem~\ref{thm:width-2} holds in this case because we have $e(P) = 13$ and $\kappa_2(P) = 0$, therefore 
$$
|P| \cdot \cent{P} = 6 = \frac{6}{\log 13} \log 13 \leqslant 1.63 \log 13 \leqslant (2 - \varepsilon) \log e(P).
$$

We now find the graph $\incompG{I(P)}$ and compare it to $\incompG{P}$. Notice first that $$\ent{P} = \log 6 - \cent{P} = \log 3.$$Define now the following collection of intervals contained in $(0,1)$:
\begin{align*}
&(y^*_{a^-},y^*_{a^+}) = (y^*_{d^-},y^*_{d^+}) = (0, 1/3),\\
&(y^*_{b^-},y^*_{b^+}) = (y^*_{e^-},y^*_{e^+}) = (1/3, 2/3),\\
&(y^*_{c^-},y^*_{c^+}) = (y^*_{f^-},y^*_{f^+}) = (2/3, 1).
\end{align*}
Then it is a straighforward task to check that $\{(y_{v^-}, y_{v^{+}}) \mid v \in \{a,b,c,d,e,f\}\}$ is consistent for $P$. Moreover, letting $x^*_v := y^*_{v^+} - y^*_{v^-}$ we have $$-\frac{1}{6} \sum_{v \in \{a,b,c,d,e,f\}} \log x^*_v = \log 3$$ hence we do have the optimal collection of intervals for $P$. The associated graph $\incompG{I(P)}$ consists of three disjoint edges, see Figure \ref{fig_ex_1}. In particular, we see that $\kappa_2(P)  = 0$ and $\kappa_2(I(P)) = 3$.

\end{Ex}

This example shows that it is not possible to work with $I(P)$ directly because some edges in $\incompG{P}$ may disappear in $\incompG{I(P)}$. The next section explains how we can handle this problem.

\section{Structure of the proof of Theorem \ref{thm:width-2}}\label{sec:struct_proof}

The proof of our main theorem being involved, we explain its structure and the intermediate results here. The details will be given in the following sections. 

Our proof is by induction on $n := |P|$. Since the case $n \leqslant 2$ is clear, we assume $n \geqslant 3$. Furthermore, if $\incompG{P}$ is not connected, then $P$ is an ordinal sum $P'_1 \oplus P'_2$ of two smaller posets and we have:
\begin{align*}
|P| \cdot \cent{P} &= |P'_1| \cdot \cent{P'_1} + |P'_2| \cdot \cent{P'_2}
\qquad \text{(by Proposition~\ref{DiG})},\\
\log e(P) &= \log e(P'_1) + \log e(P'_2) \quad \text{and}\\ 
\kappa_2(P) &= \kappa_2(P'_1) + \kappa_2(P'_2).
\end{align*}
By induction, \eqref{eq:width-2} is satisfied by $P'_1$ and $P'_2$, and thus also for $P$.

Hence, we may assume that $\incompG{P}$ is connected. Note that in this case, $\kappa_2(P) = 0$ since $n \geqslant 3$. We study the structure of $\incompG{P}$ closely under the hypothesis $\incompG{P}$ connected and $n\geqslant 3$.

As explained in Section \ref{sec:poset_entropy}, it is tempting to work with $I(P)$ rather than $P$. Example \ref{example_1} shows that this is not really possible because $\incompG{I(P)}$ may be disconnected even if $\incompG{P}$ is connected, hence the number of connected components of size $2$ are not necessarily the same for $\incompG{I(P)}$ and $\incompG{P}$. 

To handle this problem, we will add somes edges between the connected components of $\incompG{I(P)}$. These edges are chosen among those edges of $\incompG{P}$ that disappeared in $\incompG{I(P)}$, we will call them `phantom edges'. The graph $\incompG{I(P)}$ together with the phantom edges is the incomparabilty graph of a width-2 interval order $Q$, and we show that we can assume $P = Q$ for the rest of the proof. These statements concerning the graph $\incompG{P}$ and $\incompG{I(P)}$ are proved carefully in Sections \ref{sec:struct_GP} and \ref{sec:phantom_edges}.


Our strategy now is to seek two elements $u, v$ that are incomparable in $P$ and whose intervals in the canonical interval representation of $I(P)$ have `small' overlap. We will prove that the removal of $uv$ from $\incompG{P}$ yields a new poset $P'$ satisfying the following three conditions:

\begin{enumerate}[({C}1)]
\item \label{c:h_and_e} $\Delta h \leqslant (2 - \varepsilon) \Delta e$ with $\Delta h := n \cent{P} - n \cent{P'} $ and $\Delta e := \log e(P) - \log e(P')$ ;
\item \label{c:ordinal_sum} the poset $P'$ decomposes as an ordinal sum $P'_1 \oplus P'_2$;
\item \label{c:kappa} $\kappa_2(P'_1) = \kappa_2(P'_2) = 0$.
\end{enumerate}

Assuming that such an edge $uv$ can be found, we get
\begin{align*}
|P| \cdot \cent{P} &= |P'| \cdot \cent{P'} + \Delta h \quad \text{(by definition of $\Delta h$)} \\
&= \sum_{i=1,2} |P'_i| \cdot \cent{P'_i} + \Delta h
\quad \text{(since $P' = P'_1 \oplus P'_2$)}\\
&\leqslant 
\sum_{i=1,2} \left((2-\varepsilon) \log e(P'_i) + \varepsilon \underbrace{\kappa_2(P'_i)}_{=0} \right) + \Delta h \quad \text{(by induction)}\\
&\leqslant
\sum_{i=1,2} (2-\varepsilon) \log e(P'_i) + (2-\varepsilon) \Delta e \quad \text{(since $\Delta h \leqslant (2-\varepsilon) \Delta e$)}\\
&\leqslant (2- \varepsilon) (\log e(P') + \Delta e) \quad \text{(since $P' = P'_1 \oplus P'_2$)}\\
&= (2- \varepsilon) \log e(P) + \varepsilon \overbrace{\kappa_2(P)}^{=0} \quad \text{(by definition of $\Delta e$)}.
\end{align*}
and this concludes the proof. Again, the fact that such an edge exists is not obvious, and we prove this in Section \ref{sec:small_overlap}.

The final discussion is presented in Section \ref{sec:proof}. Actually, for a few particular posets, the existence of the edge $uv$ is not guaranteed, and we have to treat these cases by hand. This is done in Section \ref{sec:special_cases}.

\section{The structure of $\incompG{P}$ and $\incompG{I(P)}$} \label{sec:struct_GP}

Since our poset $P = (X,\leqslant)$ has width $2$, we know that $\incompG{P}$ is bipartite with bipartition, say, $A \cup B$. Hence $A$ and $B$ correspond to disjoint chains that cover the poset $P$. Moreover, transitivity of $\leqslant$ implies immediately that for each $u$ in $A$ (respectively in $B$), the neighbors of $u$ in $B$ (respectively in $A$) form a chain in $B$ (in $A$). 

Because $\incompG{P}$ is bipartite, the canonical interval representation of $I(P)$ can be constructed with the KM algorithm. Denote by $z^* \in \STAB(\incompG{P})$ the optimal solution of~\eqref{eq:ent_def} for $\incompG{P}$. Letting $x^*_v := \frac{1}{nz^*_v}$ for $v \in V$, we find the optimal solution of \eqref{eq:ent_def} for $\compG{P}$. Thus the lengths of the intervals are given by:
$$
x^*_u = \frac{|A_i| + |B_i|}{n} \cdot \frac{1}{|A_i|} \quad \text{if}\ u \in A_i \quad \text{and} \quad x^*_v = \frac{|A_i| + |B_i|}{n} \cdot \frac{1}{|B_i|} \quad \text{if}\ v \in B_i.
$$

Notice that we have 
$$
\sum_{u \in A} x^*_u = \sum_{i=1}^k \sum_{u \in A_i} x^*_u = \sum_{i=1}^k |A_i| \left ( \frac{|A_i| + |B_i|}{n} \cdot \frac{1}{|A_i|} \right) = 1
$$
and similarly
$$
\sum_{v \in B} x^*_v=1
$$ 
thus each of the chains $A$ and $B$ yield a chain of consecutive intervals spanning $(0,1)$ in the canonical interval representation of $I(P)$ (unless $A_i = \varnothing$ or $B_i = \varnothing$ for some $i$, that is, unless if $P$ has some cutpoint ---see Figure~\ref{QIP} for an illustration). The endpoints of all the intervals can be directly inferred from this. Moreover, as the following lemma shows, the pairs $A_i, B_i$ are distributed in a very orderly way in the chains $A, B$. Since the result follows directly from Lemma~\ref{lem:KM_connected_pairs} and \cite[Lemma 10]{SUPIjournal}, we omit the proof. For $D$ and $E$ two disjoint subsets of the poset $P$, we write $D \leqslant E$ if $d \leqslant e$ for every $d \in D$ and $e \in E$. Then:

\begin{lemma} \label{LSub} Let $P$ be a width-$2$ poset, let $A_i, B_i$ for $i = 1, \ldots, k$ be the pairs given by the KM algorithm and moreover let $C_i := A_i \cup B_i$ for all $i$. Then there exists a permutation $\sigma$ of $\{1,\ldots,k\}$ such that $C_{\sigma(1)} \leqslant \cdots \leqslant C_{\sigma(k)}$ in $P$. In particular, each $A_i$ and each $B_i$ is an interval in its respective chain.
\end{lemma}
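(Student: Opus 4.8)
My plan is to prove the sharper structural statement: each $A_i$ is an interval of the chain $A$, each $B_i$ is an interval of the chain $B$, and the blocks $C_1,\dots,C_k$ admit a linear order in which no element of an earlier block lies strictly above an element of a later one (which is what I read the display $C_{\sigma(1)}\leqslant\cdots\leqslant C_{\sigma(k)}$ as asserting). Fix the chain orderings $a_1<\cdots<a_p$ of $A$ and $b_1<\cdots<b_q$ of $B$. The engine is the ``staircase'' shape of $\incompG{P}$, which is exactly \cite[Lemma~10]{SUPIjournal} (and is also visible from the transitivity remark already made in the text): for every non-isolated $a_i$ the neighbourhood $N(a_i)$ is an interval $\{b_{\ell_i},\dots,b_{r_i}\}$ of the chain $B$, the maps $i\mapsto\ell_i$ and $i\mapsto r_i$ are non-decreasing, and $a_i>b_j$ when $j<\ell_i$ while $a_i<b_j$ when $j>r_i$ (symmetrically with $A$ and $B$ swapped); in particular the neighbourhood of any interval of $A$ is again an interval of $B$.

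First I would check that the pair $(A_1,B_1)$ produced by the KM algorithm is a pair of intervals. Suppose not: then there are $a_i,a_k\in A_1$ and $a_j\notin A_1$ with $i<j<k$. Split $A_1$ into its part $A'$ below $a_j$ and its part $A''$ above $a_j$ (both nonempty). In $\incompG{P}[A_1\cup B_1]$ every edge leaving $A_1$ lands in $B_1$, so the only possible bridge between $A'\cup N(A')$ and $A''\cup N(A'')$ is a common neighbour; since $\incompG{P}[A_1\cup B_1]$ is connected by Lemma~\ref{lem:KM_connected_pairs}, we get $N(A')\cap N(A'')\neq\varnothing$. The largest index occurring in $N(A')$ is $r_{j'}$, where $a_{j'}=\max A'$, and the smallest occurring in $N(A'')$ is $\ell_{j''}$, where $a_{j''}=\min A''$, so $\ell_{j''}\leqslant r_{j'}$; combining this with $\ell_{j'}\leqslant\ell_j$ and $r_j\leqslant r_{j''}$ gives $N(a_j)=\{b_{\ell_j},\dots,b_{r_j}\}\subseteq\{b_{\ell_{j'}},\dots,b_{r_{j''}}\}\subseteq N(A')\cup N(A'')\subseteq B_1$. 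Hence $A_1\cup\{a_j\}$ has the same neighbourhood as $A_1$ but one more element, so a strictly larger ratio, contradicting the maximality of $|A_1|/|B_1|$. (Isolated vertices of $\incompG{P}$ are peeled off by KM as singleton blocks $\{a\},\varnothing$ or $\varnothing,\{b\}$, which are trivially intervals.) Thus $A_1$ is an interval of $A$ and, by the first paragraph, $B_1=N(A_1)$ is an interval of $B$.

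Finally I would induct on $|P|$. Removing $C_1=A_1\cup B_1$ leaves the $A$-vertices split into a ``low'' set (those below $A_1$) and a ``high'' set (those above $A_1$), and likewise the $B$-vertices; monotonicity of $\ell$ and $r$ shows that a low vertex has no neighbour above $B_1$ and a high vertex none below $B_1$, so $\incompG{P}-A_1-B_1$ is the disjoint union of two (smaller) width-$2$ incomparability graphs, each with its own staircase structure. Running the KM algorithm on a disjoint union produces exactly the union of the pairs obtained on each summand, so by induction every remaining $A_i$ ($B_i$) is an interval contained in one of the two low/high sub-chains, and within each summand the blocks are already consistently ordered on the $A$-side and the $B$-side. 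Concatenating the low blocks, then $C_1$, then the high blocks gives the permutation $\sigma$, and the ``in particular'' clause follows. The part that takes care — and the reason it is convenient to cite \cite[Lemma~10]{SUPIjournal} rather than spell this out — is the bookkeeping showing that, after relabelling the surviving vertices at each step, the neighbourhood interval $B_i$ really lands in the slot matching $A_i$, i.e.\ that ``being an interval'' and ``being in the right order'' are maintained on both sides simultaneously; this monotonicity is robust, but the induction is notationally heavy, which I expect to be the main practical obstacle.
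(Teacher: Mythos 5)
Your argument is, in substance, the derivation the paper omits: you use exactly the two ingredients it cites (connectivity of the KM pairs, Lemma~\ref{lem:KM_connected_pairs}, and the staircase structure of width-$2$ incomparability graphs, \cite[Lemma 10]{SUPIjournal}), and your exchange argument for the first pair is sound (a ``gap'' element $a_j$ satisfies $N(a_j)\subseteq B_1$, so adjoining it strictly increases the ratio). Your reinterpretation of the display is also the right one, indeed it is forced: with the paper's elementwise definition of $D\leqslant E$, the statement as printed already fails for Example~\ref{example_1}, where $d\in C_1$ and $b\in C_2$ are incomparable; the weaker assertion you prove (no element of a later block lies strictly below an element of an earlier one), combined with the interval property, is precisely what the subsequent windowing of the canonical representation needs.

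Two points do need repair or elaboration. First, the auxiliary claim ``the neighbourhood of any interval of $A$ is again an interval of $B$'' is false in general: consecutive neighbourhoods can be disjoint with a comparable vertex in between, e.g.\ $A=\{a_1<a_2\}$, $B=\{b_1<b_2<b_3\}$ with $a_1\parallel b_1$, $a_2\parallel b_3$ and $a_1<b_2<a_2$ gives $N(\{a_1,a_2\})=\{b_1,b_3\}$. So your one-line justification that $B_1$ is an interval does not stand; it is, however, rescued by the same tool you already use: if $B_1=N(A_1)$ skipped some $b_m$, then each $N(a)$ with $a\in A_1$, being an interval of $B$ avoiding $b_m$, would lie entirely on one side of $b_m$, making $\incompG{P}[A_1\cup B_1]$ disconnected and contradicting Lemma~\ref{lem:KM_connected_pairs}. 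Second, the final concatenation is where the actual order-theoretic content sits, and it should be written out rather than deferred to ``bookkeeping'': one must check that no low element lies strictly above an element of $C_1$ or of the high part, and no element of $C_1$ lies strictly above a high element (these follow from transitivity plus the facts that no element of $A_1$ is isolated when $B_1\neq\varnothing$ and that $N(a)\subseteq B_1=[b_\mu,b_\nu]$ for $a\in A_1$), and also that a KM run on a disjoint union restricts to valid KM runs on the summands (the mediant inequality plus inclusion-minimality, as in Lemma~\ref{lem:KM_connected_pairs}). With those details supplied, your proof is correct and is essentially the intended one, carried out in full instead of by citation.
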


It follows from Lemma~\ref{LSub} that the canonical representation of $I(P)$ has 
\begin{itemize}
\item $|A_{\sigma(i)}|$ consecutive intervals all of length $\frac{|C_{\sigma(i)}|}{n} \cdot \frac{1}{|A_{\sigma(i)}|}$ as well as
\item $|B_{\sigma(i)}|$ consecutive intervals of length $\frac{|C_{\sigma(i)}|}{n} \cdot \frac{1}{|B_{\sigma(i)}|}$
\end{itemize}
within the interval $\left(\sum_{j < i} \frac{|C_{\sigma(j)}|}{n}, \sum_{j \leqslant i} \frac{|C_{\sigma(j)}|}{n}\right)$ for $i = 1, \ldots, k$.

Similarly to Figure~\ref{fig:KM_packing}, we can represent $I(P)$ as a perfect packing of $n$ rectangles of area $\frac{1}{n}$ in the unit square. This time we rotate the packing by 90 degrees and use the linear order on the $C_i$'s induced by $P$. We represent each element $v \in X$ by a rectangle of width $x^*_v$ and height $z^*_v$,  in such a way that the projections of the rectangles on the $x$ axis form the canonical interval representation of $I(P)$, see Figure~\ref{fig:packing_I(P)}.

\begin{figure}[ht]
\begin{center}
\begin{tikzpicture}[scale=0.3,inner sep=2.5pt]
\draw (0,0) -- (22,0) -- (22,22) -- (0,22) -- (0,0);
\draw (8,0) -- (8,22);
\draw (10,0) -- (10,22);
\draw (0,16.5) -- (8,16.5);
\draw (10,7.33) -- (22,7.33);
\draw (2.66, 0) -- (2.66,16.5); 
\draw (5.33,0) -- (5.33,16.5); 

\draw (13,7.33) -- (13, 22); 
\draw (16,0) -- (16,22); 
\draw (19,7.33) -- (19,22); 

\node at (4.5,23) {$A_{\sigma(1)}$};
\node at (8.9,23) {$A_{\sigma(2)}$};
\node at (15.5,23) {$A_{\sigma(3)}$};
\node at (4.5,-1) {$B_{\sigma(1)}$};

\node at (15.5,-1) {$B_{\sigma(3)}$};
\tikzstyle{vtx}=[circle,draw,thick,fill=gray!25]
\node[vtx] (b1) at (1.4,5) {};

\node[vtx] (b2) at (4,5) {};
\node[vtx] (b3) at (6.7,5) {};

\node[vtx] (b4) at (13,5) {};
\node[vtx] (b5) at (19,5) {};

\node[vtx] (a1) at (4,19.5) {};
\node[vtx] (a2) at (9,19.5) {};
\node[vtx] (a3) at (11.5,19.5) {};

\node[vtx] (a4) at (14.5,19.5) {};
\node[vtx] (a5) at (17.5,19.5) {};
\node[vtx] (a6) at (20.5,19.5) {};

\draw [thick] (a1) -- (b1);
\draw [thick] (a1) -- (b2);
\draw [thick] (a1) -- (b3);

\draw [thick] (a3) -- (b4);
\draw [thick] (a4) -- (b4);

\draw [thick] (a5) -- (b5);
\draw [thick] (a6) -- (b5);

\draw [thick, dashed] (a4) -- (b5);

\end{tikzpicture}
\end{center}
\caption{Perfect rectangle packing for $I(P)$. The solid edges are the incomparabilities of $I(P)$. The dashed edge is an incomparability of $P$ that disappeared in $I(P)$.}
\label{fig:packing_I(P)}
\end{figure}
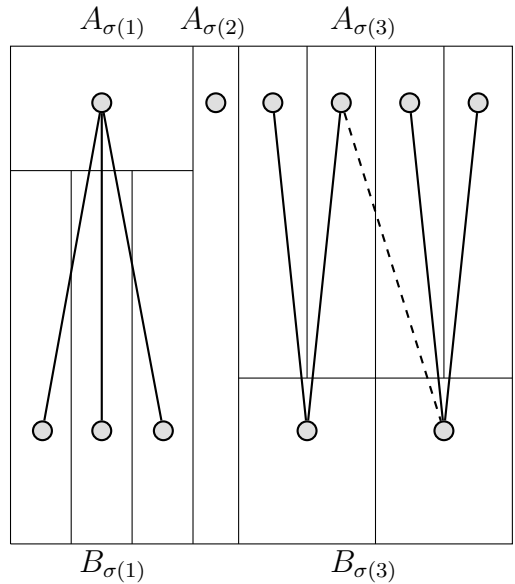

We now study closely the structure of the graph $\incompG{I(P)}$. The connected components of $\incompG{I(P)}$ can actually be inferred directly from the canonical representation of $I(P)$: because the intervals for elements of chain $A$ are consecutive and span the interval $(0,1)$ and similarly for chain $B$, every two consecutive connected components are separated by a \emph{breakpoint}, that is a value $\beta \in [0,1]$ such that every interval $(y^*_{v^-},y^*_{v^+})$ has $\beta \leqslant y^*_{v^-}$ or $y^*_{v^+} \leqslant \beta$, that is, each interval is entirely to the left of or entirely to the right of $\beta$. In particular, $\beta = \sum_{j \leqslant i} \frac{|C_{\sigma(j)}|}{n}$ is a breakpoint for $i = 0, \ldots, k$. Hence $I(P)$ admits at least two breakpoints, 0 and 1, they will be called the \emph{trivial} breakpoints. Let $0 = \beta_0 \leqslant \cdots \leqslant \beta_\ell = 1$ denote the breakpoints of $I(P)$ with $\ell \geqslant k+1$. For $i = 1, \ldots, \ell$, we define the \emph{epoch} $\epoch_i$ to be the set of elements of $P$ represented by the intervals located between $\beta_{i-1}$ and $\beta_i$. Note that in general $\epoch_i$ contains elements from both chains $A$ and $B$. Moreover, since $\sum_{j \leqslant i} \frac{|C_{\sigma(j)}|}{n}$ is a breakpoint for all $i$, each epoch is contained in $C_q$ for some $q \in \{1,\ldots,k\}$.

From now on, we will use the notation $\epochA_i := \epoch_i \cap A$ and $\epochB_i := \epoch_i \cap B$ for $i \in \{1,\ldots,\ell\}$. For the cardinalities, we use $\epochAsize_i = |\epochA_i|$ and $\epochBsize_i = |\epochB_i|$. 

\begin{lemma}\label{lem:CC}
The connected components of $\incompG{I(P)}$ are exactly the subgraphs induced on the epochs $\epoch_i$. Moreover, each of these subgraphs is bipartite with bipartition $\epochA_i \cup \epochB_i$. Finally, we have $\gcd(\epochAsize_i,\epochBsize_i) = 1$. 
\end{lemma}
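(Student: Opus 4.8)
The plan is to read all three assertions off the rectangle-packing description of $I(P)$ given above, using throughout that two elements $u$ and $v$ are incomparable in the interval order $I(P)$ exactly when their canonical intervals $(y^*_{u^-},y^*_{u^+})$ and $(y^*_{v^-},y^*_{v^+})$ intersect; in other words, $\incompG{I(P)}$ is the interval graph of the canonical representation. First I would check that no edge of $\incompG{I(P)}$ joins two distinct epochs: a breakpoint lies in the interior of no canonical interval, so every interval of $\epoch_i$ is contained in $(\beta_{i-1},\beta_i)$ and is therefore disjoint from every interval of $\epoch_j$ with $j\neq i$. Hence every connected component of $\incompG{I(P)}$ is contained in a single epoch, and it remains to show that each epoch induces a connected subgraph.

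For the latter, the decisive step is to show that the union $U_i:=\bigcup_{v\in\epoch_i}(y^*_{v^-},y^*_{v^+})$ equals the single open interval $(\beta_{i-1},\beta_i)$. The intervals of the elements of the chain $A$ are consecutive and tile $(0,1)$, and since no breakpoint splits one of them, those belonging to $\epoch_i$ tile $(\beta_{i-1},\beta_i)$; the same holds for the intervals of the chain $B$ in $\epoch_i$. Consequently $U_i$ can miss a point of $(\beta_{i-1},\beta_i)$ only if that point is at the same time a common endpoint of two consecutive $A$-intervals and of two consecutive $B$-intervals; but then it lies in the interior of no canonical interval, so it is a breakpoint of $I(P)$ lying strictly between the consecutive breakpoints $\beta_{i-1}$ and $\beta_i$ --- which is impossible. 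Once $U_i=(\beta_{i-1},\beta_i)$ is known, connectivity follows at once: as $S$ ranges over the connected components of $\incompG{I(P)}[\epoch_i]$, the sets $\bigcup_{v\in S}(y^*_{v^-},y^*_{v^+})$ are nonempty, open, and pairwise disjoint (a point lying in two of them would yield an edge between distinct components), and their union is the connected set $(\beta_{i-1},\beta_i)$; hence there is only one of them. This settles the first assertion, and bipartiteness is then immediate: since $I(P)$ extends $P$, the chains $A$ and $B$ of $P$ remain chains in $I(P)$, hence independent sets of $\incompG{I(P)}$, so $A\cup B$ --- and, restricted to the component $\epoch_i$, the pair $\epochA_i\cup\epochB_i$ --- is a bipartition.

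It remains to prove $\gcd(\epochAsize_i,\epochBsize_i)=1$. Since $\epoch_i$ is contained in some $C_q$, every $A$-interval of $\epoch_i$ has the same length $\ell_A:=|C_q|/(n|A_q|)$ and every $B$-interval of $\epoch_i$ the same length $\ell_B:=|C_q|/(n|B_q|)$; moreover $\epochAsize_i,\epochBsize_i\geqslant1$, since under the standing assumption of Section~\ref{sec:struct_proof} that $\incompG{P}$ is connected with $n\geqslant3$ there are no isolated vertices, so every $A_q$ and $B_q$ is nonempty, which readily gives that each epoch meets both chains. Because the $\epochAsize_i$ many $A$-intervals and the $\epochBsize_i$ many $B$-intervals of $\epoch_i$ each tile $(\beta_{i-1},\beta_i)$, we get $\epochAsize_i\,\ell_A=\epochBsize_i\,\ell_B=\beta_i-\beta_{i-1}$, hence $\epochAsize_i\,|B_q|=\epochBsize_i\,|A_q|$. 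Writing $|A_q|/|B_q|=a/b$ in lowest terms --- so that $a=|A_q|/g$ and $b=|B_q|/g$ with $g:=\gcd(|A_q|,|B_q|)$ --- and using $\epochAsize_i/\epochBsize_i=|A_q|/|B_q|$, we obtain $(\epochAsize_i,\epochBsize_i)=s\,(a,b)$ for some integer $s\geqslant1$, so that $\gcd(\epochAsize_i,\epochBsize_i)=s$. If $s\geqslant2$, then $1\leqslant a<\epochAsize_i$, and since $a\,\ell_A=b\,\ell_B=|C_q|/(ng)$ the coordinate $\beta_{i-1}+a\,\ell_A$ is simultaneously an interior endpoint of the $A$-tiling of $(\beta_{i-1},\beta_i)$ and of the $B$-tiling of $(\beta_{i-1},\beta_i)$; as before, it is therefore a breakpoint of $I(P)$ strictly inside $(\beta_{i-1},\beta_i)$, a contradiction. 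Hence $s=1$ and $\gcd(\epochAsize_i,\epochBsize_i)=1$.

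I expect the connectivity of the epochs to be the main obstacle, and within it the key observation is that any coordinate which is an endpoint of both the $A$-tiling and the $B$-tiling must be a breakpoint of $I(P)$; once this is established, the remaining steps are routine bookkeeping with the rectangle packing already at hand.
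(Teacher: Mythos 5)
Your proof is correct and follows essentially the same route as the paper: epochs are separated by breakpoints, each epoch is connected because its $A$- and $B$-intervals tile $(\beta_{i-1},\beta_i)$ with no interior breakpoint, bipartiteness comes from $A$ and $B$ being chains, and a common divisor $t>1$ would force a common interior tiling endpoint, i.e.\ a breakpoint strictly inside $(\beta_{i-1},\beta_i)$. You merely spell out details the paper leaves implicit (the disjoint-open-sets connectivity argument and the $(\psi_i,\omega_i)=s(a,b)$ bookkeeping), so there is nothing to fix.
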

\begin{proof}
By definition of a breakpoint, $\epoch_i$ is disconnected from $\epoch_j$ for $i \neq j$. Hence it suffices to show that every epoch $\epoch_i$ induces a connected subgraph of $\incompG{I(P)}$. If $|\epoch_i| = 1$ then this is obvious. Assume that $|\epoch_i| \geqslant 2$. Then $\epochAsize_i \geqslant 1$ and $\epochBsize_i \geqslant 1$. In the canonical interval representation of $I(P)$, the intervals for the elements of $\epochA_i$ (respectively $\epochB_i$) are consecutive and span $(\beta_{i-1},\beta_{i})$. Moreover, there is no breakpoint $\beta$ in the open interval $(\beta_{i-1},\beta_{i})$. From this, we conclude that $\epoch_i$ induces a connected component of $\incompG{I(P)}$.

The graph $\incompG{I(P)}$ being itself bipartite with bipartition $A \cup B$, the second assertion is obvious.

For the last assertion, suppose that $\epoch_i$ is contained in $C_j = A_j \cup B_j$. Then we know that the intervals for elements of $\epoch_i$ in the canonical interval representation are:
\begin{itemize}
\item $\epochAsize_i$ consecutive intervals of length $\frac{|C_j|}{n} \cdot \frac{1}{|A_{j}|}$ and  
\item $\epochBsize_i$ consecutive intervals of length $\frac{|C_j|}{n} \cdot \frac{1}{|B_{j}|}$
\end{itemize}
within the interval $(\beta_{i-1}, \beta_i)$. If $\gcd(\epochAsize_i,\epochBsize_i) = t > 1$, then observe that the $\frac{\epochAsize_i}{t}$th interval for an element in $\epochA_i$ and the $\frac{\epochBsize_i}{t}$th interval for an element of $\epochB_i$ have the same right endpoint, which implies the existence of a breakpoint $\beta \in (\beta_{i-1}, \beta_i)$, a contradiction. 
\end{proof}

\section{Phantom edges} \label{sec:phantom_edges} 

We use the same notations as in the previous section. Our goal here is to restore the connectivity of $\incompG{I(P)}$ by adding artificial edges between consecutive epochs ---the `phantom edges'--- so that the incomparability graph of the resulting width-$2$ interval order $Q$ is connected. These edges are chosen among the edges of $\incompG{P}$ that disappeared in $\incompG{I(P)}$, which explains the name `phantom edge'. This implies that $H(P) = H(Q)$ (see Lemma \ref{QI}), which will later allow us to work with $Q$ rather than with $P$.
Since we assume $\incompG{P}$ connected, there is always at least one edge $uv$ between epochs $\epoch_{i}$ and $\epoch_{i+1}$. Moreover:

\begin{lemma}Let $\epoch_{i} = \epochA_i \cup \epochB_i$ and $\epoch_{i+1} = \epochA_{i+1} \cup \epochB_{i+1}$ be two consecutive epochs of $\incompG{I(P)}$. Then there is an edge $uv$ either between either $\epochA_i$ and $\epochB_{i+1}$ or between $\epochB_{i}$ and $\epochA_{i+1}$. Moreover, we may assume either that $u$ is the last element of  $\epochA_i$ and $v$ is the first element of $\epochB_{i+1}$, or $u$ is the last element of $\epochB_{i}$ and $v$ is the first element of $\epochA_{i+1}$. \end{lemma}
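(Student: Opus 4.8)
The plan is a direct argument by contradiction, using only bipartiteness of $\incompG{P}$, the fact that $I(P)$ is an extension of $P$, and the chain structure of $P$.

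First I would fix notation and record the structural facts that are needed. Since we are in the case $\incompG{P}$ connected with $n \geqslant 3$, the graph has no isolated vertex, so (as in the discussion of Section~\ref{sec:struct_GP}) the canonical representation has no empty block $A_j$ or $B_j$, and hence both chains $A$ and $B$ are cut by the breakpoints into consecutive blocks that together span $(0,1)$; in particular every epoch meets both $A$ and $B$, so $\epochA_i, \epochB_i, \epochA_{i+1}, \epochB_{i+1}$ are all nonempty. Let $u_1$ and $u_2$ be, respectively, the last element of $\epochA_i$ in the chain $A$ and the last element of $\epochB_i$ in the chain $B$, and let $v_1$ and $v_2$ be the first element of $\epochB_{i+1}$ in $B$ and the first element of $\epochA_{i+1}$ in $A$. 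Since the blocks $\epochA_1, \epochA_2, \ldots$ occur along $A$ in the order of the epochs (and likewise for $B$), the union $\epoch_1 \cup \cdots \cup \epoch_i$ meets $A$ in an initial segment of the chain $A$ whose greatest element is $u_1$ and meets $B$ in an initial segment whose greatest element is $u_2$; symmetrically $\epoch_{i+1} \cup \cdots \cup \epoch_\ell$ meets $A$ in a final segment whose least element is $v_2$ and meets $B$ in a final segment whose least element is $v_1$.

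Next, because $\incompG{P}$ is bipartite with parts $A$ and $B$, every edge of $\incompG{P}$ between $\epoch_i$ and $\epoch_{i+1}$ joins $\epochA_i$ to $\epochB_{i+1}$ or joins $\epochB_i$ to $\epochA_{i+1}$; so it suffices to prove that $u_1 v_1$ or $u_2 v_2$ is an edge of $\incompG{P}$, which yields both assertions at once. Suppose for contradiction that neither is, i.e.\ that $u_1$ is comparable to $v_1$ and $u_2$ is comparable to $v_2$ in $P$. The interval of $u_1$ lies entirely to the left of the breakpoint $\beta_i$ and that of $v_1$ entirely to its right, so $u_1 <_{I(P)} v_1$; since $I(P)$ is an extension of $P$ we cannot have $v_1 <_P u_1$, so comparability forces $u_1 <_P v_1$, and symmetrically $u_2 <_P v_2$. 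Then I would propagate these relations along the two chains: every element of $(\epoch_1 \cup \cdots \cup \epoch_i) \cap A$ is $\leqslant_P u_1$ and every element of $(\epoch_{i+1} \cup \cdots \cup \epoch_\ell) \cap B$ is $\geqslant_P v_1$, so $u_1 <_P v_1$ implies that no edge of $\incompG{P}$ joins $(\epoch_1 \cup \cdots \cup \epoch_i) \cap A$ to $(\epoch_{i+1} \cup \cdots \cup \epoch_\ell) \cap B$; exchanging the roles of $A$ and $B$ and using $u_2 <_P v_2$ shows likewise that no edge joins $(\epoch_1 \cup \cdots \cup \epoch_i) \cap B$ to $(\epoch_{i+1} \cup \cdots \cup \epoch_\ell) \cap A$. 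By bipartiteness these two types exhaust the edges of $\incompG{P}$ between $\epoch_1 \cup \cdots \cup \epoch_i$ and $\epoch_{i+1} \cup \cdots \cup \epoch_\ell$; but these are two nonempty sets partitioning the ground set of $P$, so $\incompG{P}$ would be disconnected, contradicting our standing assumption. Hence $u_1 v_1$ or $u_2 v_2$ is an edge of $\incompG{P}$, as required.

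The point that will need the most care is the bookkeeping in the first step: checking that all four sets $\epochA_i, \epochB_i, \epochA_{i+1}, \epochB_{i+1}$ are nonempty (so that ``last'' and ``first'' elements make sense) and that the union of the first $i$ epochs meets each chain in a genuine initial segment — both of which rest on there being no empty block $A_j$ or $B_j$ in the connected case. Once that is in hand, the argument reduces to the three short moves above: a bipartiteness reduction to the two candidate extreme edges, one application of the extension $P \to I(P)$, and one propagation of comparabilities along the two covering chains that contradicts connectivity.
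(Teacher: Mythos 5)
Your proof is correct, but it takes a genuinely different route from the paper's. The paper argues locally: it starts from an arbitrary edge between $\epoch_i$ and $\epoch_{i+1}$ (which exists by connectivity of $\incompG{P}$) and then slides it to the extreme position, using that $\epoch_i$ is a connected component of $\incompG{I(P)}$ --- so $u$ has a neighbour $v'$ in $\epochB_i$, and by transitivity $u$ is incomparable to the whole chain interval $[v',v]$, which contains the first element of $\epochB_{i+1}$ --- with a separate case for a singleton epoch, excluded via the structure of the KM pairs ($A_j=\{u\}$, $B_j=\varnothing$ would force an isolated vertex). You instead argue globally by contradiction: if both extreme candidate pairs (last of $\epochA_i$ with first of $\epochB_{i+1}$, and last of $\epochB_i$ with first of $\epochA_{i+1}$) were comparable, then, since $I(P)$ extends $P$, these comparabilities must point forward across the breakpoint, and propagating them along the two covering chains kills every edge of $\incompG{P}$ between $\epoch_1\cup\cdots\cup\epoch_i$ and $\epoch_{i+1}\cup\cdots\cup\epoch_\ell$ (by bipartiteness these exhaust the possible crossing edges), contradicting connectivity. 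Your version gets existence and the extremal form of the edge in one stroke and avoids the paper's singleton-epoch case, at the price of needing that all of $\epochA_i,\epochB_i,\epochA_{i+1},\epochB_{i+1}$ are nonempty so that the four extreme elements exist; this is true under the standing assumptions (the paper asserts $\epochAsize_j,\epochBsize_j\geqslant 1$ at the start of Section~\ref{sec:small_overlap}), and your justification via the absence of empty KM blocks and the tiling of $(0,1)$ by each chain is exactly where the paper's singleton-epoch analysis reappears in your argument. Both proofs ultimately rest on the same inputs --- connectivity of $\incompG{P}$ and consistency of the canonical interval representation with the two covering chains --- but the organization (local edge-sliding versus showing the whole cut across the breakpoint would be empty) is different, and your cut formulation has the mild bonus of exhibiting the contrapositive: absence of both extreme incomparabilities forces $P$ to split as an ordinal sum at $\beta_i$.
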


\begin{proof} The edge $uv$ between the two epochs exist since we assume that $\incompG{P}$ is connected. Since $\incompG{P}$ is bipartite with bipartition $A \cup B$, we have either $u \in\epochA_i = \epoch_i \cap A$ and $v \in \epochB_{i+1} = \epoch_{i+1} \cap B$ or $u \in \epochB_i = \epoch_i \cap B$ and $v \in \epochA_{i+1} = \epoch_{i+1} \cap A$.

Suppose $u \in \epochA_i$ and $v \in \epochB_{i+1}$, the argument is similar in case $u \in \epochB_{i}$ and $v \in \epochA_{i+1}$. We will show that we can assume that $v$ is the first element of  $\epochB_{i+1}$.

Since the epoch $E_i$ is a connected component of $\incompG{I(P)}$, we know that either $E_i = \{u\}$ or $u$ is adjacent to a vertex $v'$ in $\epochB_{i}$. In the second case, $u$ is adjacent to $v'$ in $\incompG{P}$ also and so $u$ is adjacent to every vertex of the interval $[v',v]$ of the chain $B$. The first element of $\epochA_{i+1}$ being in this interval, we are done.

Suppose then that $E_i = \{u\}$. Let $C_j := A_j \cup B_j$ be the pair given by the KM algorithm and containing $E_i$. By definition of the epochs and the structure of $(A_j,B_j)$, this implies that $A_j = \{u\}$ and $B_j = \varnothing$. But this is a contradiction since we assumed $\incompG{P}$ connected.

Hence we have an edge $uv$ between $u \in \epochA_i$ and $v$ the first element of $\epochB_{i+1}$. Applying the same argument to the element $u$, we can assume that $u$ is the last element of $\epochA_{i+1}$. This concludes the proof. \end{proof}

Notice that in general (that is, unless we both have $\epochAsize_i = \epochBsize_i$ and $\epochAsize_{i+1} = \epochBsize_{i+1}$, which implies $\epochAsize_i = \epochBsize_i = \epochAsize_{i+1} = \epochBsize_{i+1} = 1$ because $\gcd(\epochAsize_i,\epochBsize_i) = \gcd(\epochAsize_{i+1},\epochBsize_{i+1}) = 1$, see Lemma~\ref{lem:CC}), the cases

\begin{enumerate}
\item $u$ is the last element of $\epochA_i$ and $v$ is the first element of $\epochB_{i+1}$,
\item $u$ is the last element of $\epochB_{i}$ and $v$ is the first element of $\epochA_{i+1}$, 
\end{enumerate}
are mutually exclusive. Indeed, since $z^* \in \STAB(\incompG{P})$ we always have $z^*_u + z^*_v \leqslant 1$. To obtain $Q$ from $I(P)$, we add one such edge $uv$ to the incomparability graph of $I(P)$ for each $i \in \{1, \ldots, \ell-1\}$. We call these extra edges \emph{phantom edges}. 

\begin{Ex}
Consider the poset $P$ of Example \ref{example_1}. Then the phantom edges are exactly $db$ and $ec$. Hence in this example we have $Q = P$. This is not always the case: the reader can check this if $P$ is the disjoint union of two chains of size $2$. In that case, $\incompG{P}$ is a complete bipartite graph on $2 + 2$ vertices, $\incompG{I(P)}$ is a perfect matching on $4$ vertices and $\incompG{Q}$ is a path with $4$ vertices.
\end{Ex}

\begin{lemma}
\label{QI} The poset $Q$ satisfies the following conditions:
\begin{enumerate}[(i)]
\item $\incompG{Q}$ is connected; 
\item $Q$ is a width-$2$ interval order;
\item $\ent{Q} = \ent{P}$;
\item $e(Q) \leqslant e(P)$.
\end{enumerate}
\end{lemma}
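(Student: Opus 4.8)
The plan is to verify the four items in order, since each is essentially a consequence of the construction of $Q$ together with the structural lemmas already proved. For item (i), observe that $\incompG{Q}$ is obtained from $\incompG{I(P)}$ by adding, for each $i \in \{1,\dots,\ell-1\}$, one phantom edge joining $\epoch_i$ to $\epoch_{i+1}$. Since by Lemma~\ref{lem:CC} the graphs $\incompG{I(P)}[\epoch_i]$ are precisely the connected components of $\incompG{I(P)}$, contracting each $\epoch_i$ to a point turns $\incompG{Q}$ into a path on $\ell$ vertices; hence $\incompG{Q}$ is connected. For item (ii), note that each phantom edge $uv$ is an edge of $\incompG{P}$, i.e. $u$ and $v$ are incomparable in $P$, and $P$ has width $2$; moreover all phantom edges go between consecutive epochs and, by the displayed remark preceding the lemma, $z^*_u + z^*_v \leqslant 1$. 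So adding them to the canonical interval representation of $I(P)$ (by slightly shrinking/nudging the two intervals involved at the breakpoint $\beta_i$ so that they now overlap) still yields a family of intervals in $(0,1)$; the associated interval order $Q$ therefore exists and is an interval order. It has width $2$ because its incomparability graph is a subgraph of $\incompG{P}$ (each edge of $\incompG{Q}$ is either an edge of $\incompG{I(P)} \subseteq \incompG{P}$ or a phantom edge, hence an edge of $\incompG{P}$), and a subgraph of a bipartite graph is bipartite, so $Q$ has no antichain of size $3$.

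For item (iii), the inequality $\ent{Q} \geqslant \ent{P}$ holds because $Q$ is an extension of $P$ (every comparability of $P$ is a comparability of $I(P)$, hence of $Q$, since adding incomparabilities only removes comparabilities): entropy is monotone under taking extensions, as used already in the lemma following Lemma~\ref{lem:unicity_intervals}. For the reverse inequality, the key point is that the canonical interval representation $\{(y^*_{v^-},y^*_{v^+})\}$ of $I(P)$ is \emph{also} consistent with $Q$: indeed $v < w$ in $Q$ implies $v < w$ in $I(P)$ (the phantom edges are incomparabilities, so $Q$ and $I(P)$ have the same comparabilities — $Q$ is $I(P)$ with some pairs made incomparable), and for those the interval condition $y^*_{v^+} \leqslant y^*_{w^-}$ already holds. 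Hence this collection is a feasible point for the minimization \eqref{eq:ent_intervals} applied to $Q$, giving $\ent{Q} \leqslant -\frac1n \sum_v \log x^*_v = \ent{I(P)} = \ent{P}$, where the last equality is from the lemma cited above. Combining, $\ent{Q} = \ent{P}$.

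For item (iv), since $Q$ is an extension of $P$, every linear extension of $Q$ is a linear extension of $P$, so $e(Q) \leqslant e(P)$. The only point requiring a little care — and the one I expect to be the main (minor) obstacle — is the passage from "add overlapping intervals" to "$Q$ is a well-defined poset that is simultaneously an extension of $P$ and a sub-order, on comparabilities, of $I(P)$"; one must check that introducing the phantom overlaps does not accidentally create a new comparability elsewhere (it does not, because shrinking an interval only destroys comparabilities of $I(P)$, never creates them) and does not destroy a comparability of $P$ (it does not, because every comparability of $P$ already corresponds to a strict separation of intervals in $I(P)$ with a breakpoint between them, and the phantom nudges at breakpoint $\beta_i$ can be taken arbitrarily small, keeping all such separations strict). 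Once this is in place, items (i)–(iv) follow as above.
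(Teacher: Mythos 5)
Your items (i), (iii) and (iv) are fine and essentially the paper's own argument (your feasibility argument in (iii) is just the observation that $I(P)$ extends $Q$, hence $\ent{Q}\leqslant \ent{I(P)}=\ent{P}$, while $\ent{P}\leqslant \ent{Q}$ since $Q$ extends $P$). The genuine gap is in (ii), which is the only nontrivial part of the lemma: realizing the phantom edges "by slightly shrinking/nudging the two intervals involved at the breakpoint $\beta_i$" does not work. At an interior breakpoint $\beta$ the intervals of each chain are consecutive and span $(0,1)$, so in \emph{each} chain one interval ends exactly at $\beta$ and the next one starts exactly at $\beta$. The phantom edge joins the element $u$ whose interval ends at $\beta$ in one chain to the element $v$ whose interval starts at $\beta$ in the other chain; to make $I(u)$ and $I(v)$ intersect you must extend one of them across $\beta$, and the extended interval then also meets the touching interval of its \emph{own} chain on the other side of $\beta$. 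That creates an incomparability between two elements of the same chain, i.e.\ an edge that is neither in $\incompG{I(P)}$ nor a phantom edge, so the interval order you build is not $Q$ (indeed it is not even an extension of $P$). Your supporting claim that "every comparability of $P$ corresponds to a strict separation of intervals with a breakpoint between them, so arbitrarily small nudges keep the separations strict" is false exactly where it matters: consecutive intervals of the same chain share an endpoint, the separation there has length zero, and no nudge is small enough. (Relatedly, "shrinking an interval only destroys comparabilities" is backwards: shrinking destroys incomparabilities, extending destroys comparabilities.)

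The paper's proof spends its effort precisely on this point: first insert a gap of width $1/q$ at each of the $q=\ell-1$ interior breakpoints, so the whole picture now lives in $(0,2)$, and only then, for each breakpoint carrying a phantom edge $uv$, lengthen $I(u)$ by $1/q$ to the right and $I(v)$ by $1/q$ to the left; the two intervals then overlap inside the newly created gap and meet nothing else, giving a genuine interval representation of $Q$. Alternatively one could stay inside $(0,1)$, but then one must also move the touching same-chain neighbour away from $\beta$ (so at least a third interval is modified) and check that no existing overlap of $I(P)$ is destroyed; modifying only the two endpoints of the phantom edge, as you propose, cannot succeed. Once a correct representation of $Q$ is in place, the rest of your write-up goes through.
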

\begin{proof}
(i) This follows from Lemma~\ref{lem:CC} and the construction of $Q$.

(ii) The fact that the width of $Q$ is $2$ follows from the assumption that the width of $P$ is $2$ and from the fact that $Q$ is an extension of $P$.

Now we explain how to modify the canonical representation of $I(P)$ in order to obtain an interval representation of $Q$. As before, let $\ell$ denote the number of epochs $\epoch_i$. Thus $q = \ell - 1$ gives the number of breakpoints in $(0,1)$. For each breakpoint $\beta \in (0,1)$ we introduce a gap of $1/q$ between the intervals on each side of $\beta$, so that all intervals in the representation now fit in the interval $(0,2)$, and cover half of its area.

Consider some breakpoint $\beta$ that has a corresponding phantom edge $uv$ with the interval for $u$ touching the left of the newly created gap and the interval for $v$ touching the right of that gap. Then by adding $1/q$ to the right endpoint of the interval for $u$ and subtracting $1/q$ to the left endpoint of the interval for $v$, we make sure that these intervals intersect. After having treated in such a way all breakpoints that have a phantom edge, we obtain an interval representation for the poset $Q$. This is illustrated in Figure~\ref{QIP}.

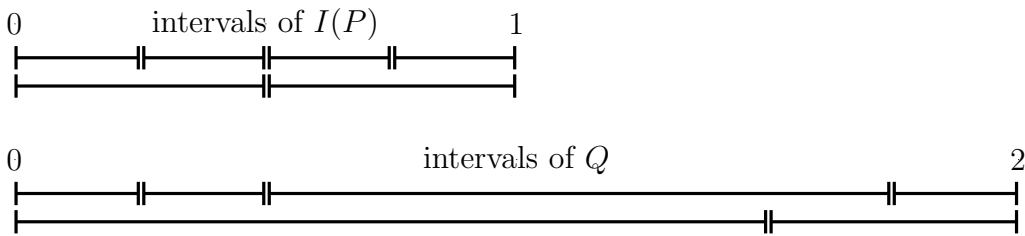
\begin{figure}[ht]
\begin{center}
\begin{tikzpicture}[scale=0.3]

\draw (0,1.5)--(0,1.5) node {$\small 0$};
\draw (22,1.5)--(22,1.5) node {$\small 1$};

\draw[very thick, |-|] (0,0)--(5.5,0);
\draw[very thick, |-|] (5.6,0)--(11,0);
\draw[very thick, |-|] (11.1, 0)--(16.5,0);
\draw[very thick, |-|] (16.6,0)--(22,0);

\draw[very thick, |-|] (0,-1.25) -- (11,-1.25);
\draw[very thick, |-|] (11.1,-1.25) -- (22,-1.25);

\draw (11,1.5)--(11,1.5) node {intervals of $I(P)$} ;

\draw (22,-4.5)--(22,-4.5) node {intervals of $Q$} ;

\draw (0,-4.5)--(0,-4.5) node {$\small 0$};
\draw (44,-4.5)--(44,-4.5) node {$\small 2$};

\draw[very thick, |-|] (0,-6)--(5.5,-6);
\draw[very thick, |-|] (5.6,-6)--(11,-6);

\draw[very thick, |-|] (38.5,-6)--(44,-6);
\draw[very thick, |-|] (11.1,-6)--(38.4,-6);

\draw[very thick, |-|] (0,-7.25) -- (33,-7.25);
\draw[very thick, |-|] (33.1,-7.25) -- (44,-7.25);

\end{tikzpicture}
\end{center}
\caption{The interval representation of $I(P)$ and $Q$ for $P$ the disjoint union of a chain of length $4$ and a chain of lentgh $2$.}
\label{QIP}
\end{figure}

(iii) The poset $I(P)$ is an extension of $Q$ which is in turn an extension of $P$. Hence $\ent{P} \leqslant \ent{Q} \leqslant \ent{I(P)}$. But we know $\ent{I(P)} = \ent{P}$, so we have equality throughout. 

(iv) Obviously, $e(Q) \leqslant e(P)$ since $Q$ extends $P$.
\end{proof}

Now assume that \eqref{eq:width-2} holds for $Q$. Then, by Lemma~\ref{QI}, we get
\begin{align*}
|P| \cdot \cent{P} &= |Q| \cdot \cent{Q} \quad \text{(because $\ent{P} = \ent{Q}$)}\\
&\leqslant (2 - \varepsilon) \log e(Q)
\quad \text{(because \eqref{eq:width-2} holds for $Q$ and $\kappa_2(Q) = 0$)}\\
&\leqslant (2 - \varepsilon) \log e(P)
\quad \text{(because $e(Q) \leqslant e(P)$).}
\end{align*}
Therefore, to prove Theorem~\ref{thm:width-2}, we can assume that $P = Q$, that is, $P$ is a width-$2$ interval order that coincides with $I(P)$ except perhaps for a few incomparabilities. 

\section{Removing an incomparability with a small overlap} \label{sec:small_overlap}
 
As discussed in Section \ref{sec:struct_proof}, to conclude the proof of Theorem~\ref{thm:width-2}, we should now prove the existence of an edge $uv$ in $\incompG{P}$ such that its removal yields a new poset $P'$ satisfying the conditions (C\ref{c:h_and_e}), (C\ref{c:ordinal_sum}) and  (C\ref{c:kappa}). Recall also that we may assume the following facts on the width-2 poset $P$:
\begin{itemize}
\item it has $n \geqslant 3$ elements,
\item its incomparability graph is connected (hence $\kappa_2(P) = 0$),
\item finally, $P$ coincides with $I(P)$ except for a few pairs of elements: the phantom edges.
\end{itemize}


In particular, $\incompG{P}$ has no isolated vertex and thus we have $\epochAsize_i \geqslant 1$ and $\epochBsize_i \geqslant 1$ for all $i$.

\subsection{Removing a phantom edge} 

It turns out that, except in a few particular cases, if $\incompG{P}$ admits phantom edges, then the conditions here above are easily satisfied. Indeed, if the edge $uv$ is a phantom edge, we have $\Delta h = 0$. In particular, (C\ref{c:h_and_e}) holds. Moreover, (C\ref{c:ordinal_sum}) also holds because the removal of the incomparability $uv$ disconnects $\incompG{P}$ into exactly two connected components. Thus the only condition that remains to be checked is (C\ref{c:kappa}). This condition always holds unless $uv$ links the first pair of epochs $\epoch_1, \epoch_2$ and $|\epoch_1| = 2$ or $uv$ links the last pair of epochs $\epoch_{\ell-1}, \epoch_{\ell}$ and $|\epoch_{\ell}| = 2$. Hence a good choice of $uv$ is possible whenever $\ell \geqslant 4$. In case $2 \leqslant \ell \leqslant 3$, there exists a good phantom edge unless $(|\epoch_{1}|,\ldots,|\epoch_{\ell}|)$ is equal to $(2,m)$ or $(m,2)$ or $(2,m,2)$ for some integer $m \geqslant 2$. In the case $\ell=1$ and in these cases, taking $uv$ to be a phantom edge will not work and we have to choose $uv$ differently.

\subsection{Removing an edge within an epoch} \label{sec:removC}

Fix an index $i \in \{1,\ldots,\ell\}$. Now, we inspect more closely the structure of the subposet of $P$ induced on $\epoch_i = \epochA_i \cup \epochB_i$. We denote this subposet by $P_i$. Without loss of generality, we assume that $\epochAsize_i  \geqslant \epochBsize_i  \geqslant 1$. Since we assumed that $P$ coincides with $I(P)$ (except for the phantom edges), the subposet $P_i$ agrees with the subposet of $I(P)$ induced on $\epoch_i$, and is thus an interval order that admits an interval representation in $(0,1)$ obtained as follows:
\begin{itemize}
\item starting from $0$, put side by side $\epochAsize_i$ intervals of length $\frac{1}{\epochAsize_i}$;
\item starting again from $0$, put side by side $\epochBsize_i$ intervals of length $\frac{1}{\epochBsize_i}$.
\end{itemize}
Recall that in the canonical interval representation of $I(P)$, the corresponding intervals have length $\frac{|C_j|}{n} \cdot \frac{1}{|A_j|} \propto \frac{1}{|\epochA_i|}$ and $\frac{|C_j|}{n} \cdot \frac{1}{|B_j|} \propto \frac{1}{|\epochB_i|}$ respectively, where $j \in \{1,\ldots,k\}$ is such that $\epoch_i \subseteq C_j$ and the proportionality constants are identical. In the above representation, we delete all intervals for elements not in $P_i$ and then rescale (and translate) so that the intervals again span $(0,1)$. 

By Lemma~\ref{lem:CC}, we know that $\gcd(\epochAsize_i,\epochBsize_i) = 1$.

\begin{lemma} \label{AB} If $\epochAsize_i \geqslant 2$ and $\epochBsize_i \geqslant 2$, there exist two elements $u, v \in P_i$ such that the corresponding intervals overlap in an interval of length exactly $\frac{1}{\epochAsize_i \epochBsize_i}$.
\end{lemma}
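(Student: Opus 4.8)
The plan is to work directly with the explicit interval representation of $P_i$ described just above the statement. Set $a := \epochAsize_i$ and $b := \epochBsize_i$, with $a \geqslant b \geqslant 2$ and $\gcd(a,b) = 1$. In the representation, the $A$-side intervals have endpoints at the multiples $\frac{0}{a}, \frac{1}{a}, \ldots, \frac{a}{a}$ and the $B$-side intervals have endpoints at $\frac{0}{b}, \frac{1}{b}, \ldots, \frac{b}{b}$. All these fractions have a common refinement with denominator $ab$: the $A$-endpoints sit at positions $\frac{b}{ab}, \frac{2b}{ab}, \ldots$ and the $B$-endpoints at $\frac{a}{ab}, \frac{2a}{ab}, \ldots$. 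So the whole picture is governed by the positions of the multiples of $a$ and of $b$ inside $\{0, 1, \ldots, ab\}$.

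First I would observe that since $\gcd(a,b) = 1$, the only common multiples of $a$ and $b$ in that range are $0$ and $ab$ — i.e.\ the only breakpoints are the two trivial ones, which is exactly why $\epoch_i$ is a single connected component (consistent with Lemma~\ref{lem:CC}). Next, the key step: I want to find an $A$-interval and a $B$-interval whose overlap has length exactly $\frac{1}{ab}$. Equivalently, in units of $\frac{1}{ab}$, I want a multiple $pb$ of $b$ and a multiple $qa$ of $a$, both lying strictly between $0$ and $ab$, with $|pb - qa| = 1$; then the $A$-interval $\left(\frac{(p-1)b}{ab}, \frac{pb}{ab}\right)$ (or its neighbor) and the $B$-interval $\left(\frac{(q-1)a}{ab}, \frac{qa}{ab}\right)$ overlap in a segment of length $\frac{1}{ab}$, depending on the sign of $pb - qa$. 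The existence of such $p, q$ is precisely B\'ezout: since $\gcd(a,b) = 1$ there are integers with $pb - qa = 1$, and one can choose the solution with $0 < p < a$ and $0 < q < b$ (the standard normalized B\'ezout solution), which also guarantees $0 < pb < ab$ and $0 < qa < ab$, so the relevant intervals are genuine intervals of the representation and not the degenerate endpoints. Then I would just exhibit the two elements $u \in \epochA_i$, $v \in \epochB_i$ whose intervals are the ones identified, and compute the overlap length, which comes out to $\frac{1}{ab} = \frac{1}{\epochAsize_i \epochBsize_i}$ by construction.

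I would also double-check the boundary behavior: because $a \geqslant 2$ and $b \geqslant 2$, the normalized B\'ezout exponents satisfy $1 \leqslant p \leqslant a-1$ and $1 \leqslant q \leqslant b-1$, so the interval indices $p-1, p, q-1, q$ all fall in the valid range $\{0, \ldots, a\}$ resp.\ $\{0, \ldots, b\}$ and the two chosen intervals are honest length-$\frac1a$ and length-$\frac1b$ intervals that genuinely overlap (one is not contained in the gap before the other). The rescaling and translation used to make $P_i$'s representation span $(0,1)$ is a common affine map applied to both chains, so it multiplies every overlap length by the same constant; but since we are told the target representation already spans $(0,1)$ with the stated interval lengths, no extra scaling factor intervenes and the overlap is exactly $\frac{1}{ab}$.

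The main obstacle is really just the bookkeeping in the B\'ezout step: making sure the chosen solution $(p,q)$ lands in the open ranges so that we land on true interior intervals rather than on a breakpoint, and correctly reading off \emph{which} of the two candidate $A$-intervals (the one ending at $\frac{pb}{ab}$ or the one starting there) overlaps the $B$-interval, according to the sign of $pb - qa$. Everything else is a direct computation from the explicit representation.
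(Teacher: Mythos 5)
Your proposal is correct and follows essentially the same route as the paper: reduce the statement to finding integers with $|m\,\epochBsize_i - p\,\epochAsize_i| = 1$, invoke $\gcd(\epochAsize_i,\epochBsize_i)=1$ (B\'ezout), and use $\epochAsize_i,\epochBsize_i \geqslant 2$ to normalize the solution into the open ranges $0<m<\epochAsize_i$, $0<p<\epochBsize_i$ before reading off the two overlapping intervals. The only slip is in the explicit pairing you wrote down (when $pb-qa=1$ the $A$-interval ending at $p/\epochAsize_i$ must be paired with the $B$-interval \emph{starting} at $q/\epochBsize_i$, not the one ending there), but you flag exactly this sign/neighbor bookkeeping yourself, and the paper handles it the same way in the discussion following the lemma.
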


\begin{proof}
It suffices to show that there are two integers $m$ and $p$ with $0 < m < \epochAsize_i$, $0 < p < \epochBsize_i$ and $\left| \frac{m}{\epochAsize_i } - \frac{p}{\epochBsize_i} \right| = \frac{1}{\epochAsize_i \epochBsize_i}$, that is, $|m \epochBsize_i - p \epochAsize_i | = 1$. Since $\gcd(\epochAsize_i ,\epochBsize_i) = 1$ there exist integers $m, p$ with $| m \epochBsize_i - p \epochAsize_i  | = 1$. It remains to prove that we can assume $0 < m < \epochAsize_i $ and $0 < p < \epochBsize_i$. Note that $| m \epochBsize_i - p \epochAsize_i  | = 1$ implies $|(m - t\epochAsize_i )\epochBsize_i - (p - t\epochBsize_i)\epochAsize_i | = 1$ for every $t \in \mathbb{Z}$. Hence one may suppose $0 < m \leqslant \epochAsize_i $ and this implies $0 < p \leqslant \epochBsize_i$. But $m = \epochAsize_i $ implies $\epochAsize_i  = 1$, and $p = \epochBsize_i$ implies $\epochBsize_i = 1$. This concludes the proof.
\end{proof}

In fact we can always suppose that there exist $m$ and $p$ with%
\begin{equation}
\label{eq:overlap}
\frac{m}{\epochAsize_i } - \frac{p}{\epochBsize_i} = \frac{1}{\epochAsize_i \epochBsize_i}.
\end{equation}
Indeed, if $\frac{m}{\epochAsize_i} - \frac{p}{\epochBsize_i} = -\frac{1}{\epochAsize_i\epochBsize_i}$ we just remplace $m$ by $\epochAsize_i-m$ and $p$ by $\epochBsize_i-p$. Hence we know that the corresponding intervals are the $m$-th of length $1/\epochAsize_i$ and the $(p+1)$-th of length $1/\epochBsize_i$. In this case, an interval of length $1/\epochAsize_i$ immediately to the right of the interval for $u$ must exist (the associated element of $P_i$ is written $u'$), as well as an interval of length $1/\epochBsize_i$ immediately to the left of the interval for $v$ (the associated element of $P_i$ is written $v'$),  see Figure~\ref{FigInt1}. In the figure and henceforth, we denote $I(u)$ the interval for $u$, and similarly for the other elements. 

\begin{figure}[ht]
\begin{center}
\begin{tikzpicture}[scale=1.5]

\draw[very thick, |-|] (0,0)--(.98,0);
\draw[very thick, |-|] (1.02,0)--(2.1,0);
\draw[very thick, |-|] (.82,.4) -- (2.2,.4);
\draw[very thick, |-|] (-.78,.4) -- (.78,.4);

\node at (1.5,0.8) {$I(v)$};
\node at (0,0.8) {$I(v')$};

\node at (.4,-.4) {$I(u)$};
\node at (1.6,-.4) {$I(u')$};

\draw[dashed] (.8,1.0) -- (.8,-0.7);
\draw[dashed] (1,1.0) -- (1,-0.7);

\draw (.9,-1.1) -- (.9,-1.1) node {\small $1/\epochAsize_i\epochBsize_i$};
\end{tikzpicture}
\end{center}
\caption{The intervals $I(u)$ and $I(v)$ of Lemma~\ref{AB} and the neighboring intervals.}
\label{FigInt1}
\end{figure}
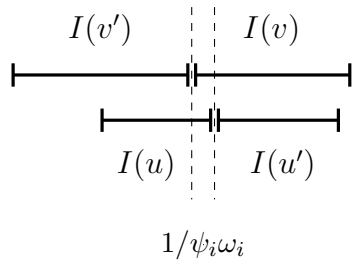

\begin{lemma}\label{EE}
Let $P_i$ be the subposet of $P$ induced by some epoch $\epoch_i$ with $\epochAsize_i \geqslant \epochBsize_i \geqslant 2$ and $u$, $v$ be two elements of $P_i$ whose intervals in the interval representation of $P_i$ are such that $I(u) \cap I(v)$ is of length $1/\epochAsize_i\epochBsize_i$. Then the suppression of $uv$ from $\incompG{P}$ yields a poset $P'$ with $$\Delta h := n \cent P - n\cent {P'} \leqslant  2 \log \left (\frac{1}{1 - \frac{1}{(\epochAsize_i+\epochBsize_i)^2}} \right)$$and $P'$ is an ordinal sum of two smaller posets $P'_1$ and $P'_2$. Moreover, unless $\epochAsize_i = \epochBsize_i + 1$, both $P'_1$ and $P'_2$ have at least three elements that are also in $P_i$.
\end{lemma}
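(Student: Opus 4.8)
The plan is to work directly with the interval representation of $P_i$ inside the canonical representation of $I(P)$, and to exploit the fact that $P$ coincides with $I(P)$ (up to phantom edges) so that removing the single incomparability $uv$ genuinely produces an interval order. First I would set up notation: let $\{(y^*_{w^-},y^*_{w^+})\}_{w\in X}$ be the canonical interval representation of $I(P)=P$ and recall that within the epoch $\epoch_i$ (contained in some $C_j$) the $A$-intervals all have a common length $\alpha$ and the $B$-intervals a common length $\gamma$, with $\alpha/\gamma=\epochBsize_i/\epochAsize_i$ after rescaling. By the discussion preceding the lemma (Equation~\eqref{eq:overlap}, Figure~\ref{FigInt1}) we may assume $u\in\epochA_i$, $v\in\epochB_i$, that $I(u)$ is the $m$-th $A$-interval and $I(v)$ the $(p{+}1)$-th $B$-interval, that $I(u)\cap I(v)$ has length $\tfrac1{\epochAsize_i\epochBsize_i}$ (rescaled), and that the right-neighbour $u'\in\epochA_i$ of $u$ and the left-neighbour $v'\in\epochB_i$ of $v$ exist. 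The key structural point is that the right endpoint of $I(u)$ lies strictly inside $I(v)$ and the left endpoint of $I(v)$ lies strictly inside $I(u)$, and these are the only overlap relations among $u,u',v,v'$ one needs to control.

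Next I would prove the ordinal-sum statement (C\ref{c:ordinal_sum}-type conclusion). Removing $uv$ from $\incompG{P}$ means declaring $u<v$ (since all of $\epochA_i\le$ the rest is respected by the interval order, $v$ must be placed after $u$). I claim the point $\beta$ equal to the right endpoint $y^*_{u^+}$ of $I(u)$ becomes a breakpoint of the new poset $P'$: every interval other than $I(v)$ that met $\beta$ — and the only candidates are the $B$-intervals straddling $y^*_{u^+}$, of which $I(v)$ is the relevant one by the minimality/neighbour setup — can be pushed to one side, and $I(v)$ itself is now forced to the right of $u$. More carefully, one argues that in $P'$ every chain through the cut is respected, so $P' = P'_1\oplus P'_2$ where $P'_1$ consists of the elements whose intervals lie (weakly) left of $\beta$ and $P'_2$ of those lying right of $\beta$. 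Here $u,u'$ and $v',v$ split across the cut; counting elements of $P_i$ on each side and using $\epochAsize_i,\epochBsize_i\ge 2$ gives that each side meets $\epoch_i$ in at least two elements, and the refinement "at least three unless $\epochAsize_i=\epochBsize_i+1$" comes from bounding $m$ and $\epochBsize_i-p$ (respectively $\epochAsize_i-m$ and $p$): from $\epochBsize_i m-\epochAsize_i p=1$ one gets $m\ge 2$ or $p\le\epochBsize_i-2$ and symmetric estimates, and $m=1$ forces $\epochAsize_i=\epochBsize_i+1$ by $\epochBsize_i-\epochAsize_i p=1$ with $0<p<\epochBsize_i$.

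For the entropy bound I would use Proposition~\ref{DiG} and Theorem~\ref{PI}/Lemma~\ref{lem:unicity_intervals} as follows. We have $n\cent P - n\cent{P'} = (n\log n - n\ent P) - (n\log n - n\ent{P'}) = n\ent{P'} - n\ent P$ (using Theorem~\ref{PGH}, since comparability graphs of posets are perfect), so it suffices to lower-bound $\ent P$ and upper-bound $\ent{P'}$ in terms of the same auxiliary interval systems. For the upper bound on $\ent{P'}$, exhibit an explicit consistent interval collection for $P'=P'_1\oplus P'_2$: take the canonical representation of $I(P)$ but, on the epoch containing $u$, nudge the right endpoint of $I(u)$ left to $\beta$ and the left endpoint of $I(v)$ right to $\beta$ (a shift of exactly the overlap length $\propto\frac1{\epochAsize_i\epochBsize_i}$); this stays in $(0,1)$ after renormalising and only changes two interval lengths. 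Then $\ent P - \ent{P'}$ (hence $-\Delta h/n$ up to sign) is controlled by $-\tfrac1n\log\tfrac{x^*_u - \delta}{x^*_u} - \tfrac1n\log\tfrac{x^*_v-\delta}{x^*_v}$ for the shift $\delta$, and substituting $x^*_u = \tfrac{|C_j|}{n}\cdot\tfrac1{\epochAsize_i}$, $x^*_v=\tfrac{|C_j|}{n}\cdot\tfrac1{\epochBsize_i}$, $\delta = \tfrac{|C_j|}{n}\cdot\tfrac1{\epochAsize_i\epochBsize_i}$ yields ratios $1-\tfrac1{\epochBsize_i}$ and $1-\tfrac1{\epochAsize_i}$ — these must be combined and compared against $1-\tfrac1{(\epochAsize_i+\epochBsize_i)^2}$, and I expect the clean bound $2\log\bigl(\tfrac1{1-1/(\epochAsize_i+\epochBsize_i)^2}\bigr)$ to emerge from the inequality $(1-\tfrac1{\epochAsize_i})(1-\tfrac1{\epochBsize_i})\ge(1-\tfrac1{\epochAsize_i+\epochBsize_i})^2\ge 1-\tfrac2{\epochAsize_i+\epochBsize_i}$ together with convexity of $\log$, though the precise packaging is the delicate part. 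The main obstacle I anticipate is exactly this last step: one must be careful that shifting endpoints does not create a \emph{new} incomparability elsewhere (it doesn't, because only $I(u),I(v)$ change and they shrink), that after renormalisation the new collection is still optimal-or-better for $P'$ rather than merely feasible, and that the somewhat lossy passage from the two factors $(1-\tfrac1{\epochAsize_i}),(1-\tfrac1{\epochBsize_i})$ to the symmetric quantity $(\epochAsize_i+\epochBsize_i)^{-2}$ is done in the direction that gives an upper bound on $\Delta h$ — this requires the AM–GM style estimate above and a short monotonicity argument, and getting the constant to come out as stated (rather than something weaker) is where the real work lies.
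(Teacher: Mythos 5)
Your ordinal-sum and ``at least three elements'' parts are essentially the paper's argument, and using Theorem~\ref{PGH} to convert $\Delta h$ into $n\ent{P'}-n\ent{P}$ and then exhibiting a feasible interval collection for $P'$ is also the right frame (feasibility alone suffices, by the way; no optimality of the new collection is needed). The genuine gap is in the quantitative step. You close the overlap by \emph{shrinking} $I(u)$ and $I(v)$ only (and each by the full overlap length $\delta$), which changes the two lengths by factors $1-\tfrac{1}{\epochBsize_i}$ and $1-\tfrac{1}{\epochAsize_i}$ and hence gives at best
\[
\Delta h \;\leqslant\; \log\frac{1}{1-\tfrac{1}{\epochAsize_i}}+\log\frac{1}{1-\tfrac{1}{\epochBsize_i}},
\]
a loss that is of order $1/\epochBsize_i$, not of order $1/(\epochAsize_i+\epochBsize_i)^2$ as the lemma requires. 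This is not a packaging issue: for $\epochAsize_i=3,\epochBsize_i=2$ your bound is $\log\tfrac32+\log 2\simeq 1.58$, while the statement demands $2\log\tfrac{25}{24}\simeq 0.12$, and even $(2-\varepsilon)\Delta e$ is only about $0.34$ there, so condition (C\ref{c:h_and_e}) and Lemma~\ref{LF} would fail with your estimate. The auxiliary inequality you hope to use, $(1-\tfrac1{\epochAsize_i})(1-\tfrac1{\epochBsize_i})\geqslant(1-\tfrac{1}{\epochAsize_i+\epochBsize_i})^2$, is in fact false (same example: $\tfrac13<\tfrac{16}{25}$).

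The missing idea is that the endpoints being moved are \emph{shared} endpoints: the paper moves the common endpoint of $I(u)$ and $I(u')$ to the left by $\tfrac{1}{\epochAsize_i(\epochAsize_i+\epochBsize_i)}$ and the common endpoint of $I(v')$ and $I(v)$ to the right by $\tfrac{1}{\epochBsize_i(\epochAsize_i+\epochBsize_i)}$; these two shifts sum to exactly the overlap $\tfrac{1}{\epochAsize_i\epochBsize_i}$, creating the breakpoint $\beta$, while each shrunk interval is paired with a neighbouring interval that \emph{lengthens} by the same amount. Each such pair contributes a factor $\bigl(1-\tfrac{1}{\epochAsize_i+\epochBsize_i}\bigr)\bigl(1+\tfrac{1}{\epochAsize_i+\epochBsize_i}\bigr)=1-\tfrac{1}{(\epochAsize_i+\epochBsize_i)^2}$ to the product of lengths, and the two pairs give exactly $2\log\bigl(\tfrac{1}{1-1/(\epochAsize_i+\epochBsize_i)^2}\bigr)$. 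Without this compensation from $I(u')$ and $I(v')$ (which your plan leaves untouched), no choice of how to split the shrinkage between $I(u)$ and $I(v)$ can reach the stated quadratic bound, so the proof as proposed does not go through.
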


\begin{proof} 
Let $n := | P |$ $:= | X |$ with $X$ the ground set of poset $P$. As noticed above, we can assume $I(u)$ and $I(v)$ are such that the left endpoint of $I(u)$ is to the left of $I(v)$, as in Figure~\ref{FigInt1}. As before, $u'$ is the element of $P$ such that $I(u')$ follows $I(u)$ and $v'$ is the element of $P$ such that $I(v)$ follows $I(v')$, see Figure \ref{FigInt1}. By this local modification we get a new poset $P'$ with $\incompG{P'} = \incompG{P} - uv$. 

The idea is to move the right endpoint of $I(u)$, which is also the left endpoint of $I(u')$, by $\frac{1}{\epochAsize_i(\epochAsize_i+\epochBsize_i)} = \frac{\epochBsize_i}{(\epochAsize_i\epochBsize_i)(\epochAsize_i+\epochBsize_i)}$ to the left and the left endpoint of $I(v)$, which is also the right endpoint of $I(v')$, by $\frac{1}{\epochBsize_i(\epochAsize_i+\epochBsize_i)}$ to the right, see Figure~\ref{FigInt3}.

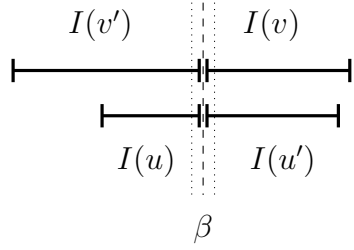
\begin{figure}[ht]
\begin{center}
\begin{tikzpicture}[scale=1.5]

\draw[very thick, |-|] (0,0)--(.88,0);
\draw[very thick, |-|] (.92,0)--(2.1,0);
\draw[very thick, |-|] (.92,.4) -- (2.2,.4);
\draw[very thick, |-|] (-.78,.4) -- (.88,.4);

\node at (1.5,0.8) {$I(v)$};
\node at (0,0.8) {$I(v')$};

\node at (.4,-.4) {$I(u)$};
\node at (1.6,-.4) {$I(u')$};

\draw[dotted] (.8,1.0) -- (.8,-0.7);
\draw[dashed] (.9,1.0) -- (.9,-0.7);
\draw[dotted] (1,1.0) -- (1,-0.7);
\node at (.9,-1) {$\beta$};
\end{tikzpicture}
\end{center}
\caption{After the modification of the intervals $I(u)$, $I(v)$, $I(u')$ and $I(v')$.}
\label{FigInt3}
\end{figure}

We denote as before $x_w$ the length of the interval $I(w)$ for $w \in X$ in $I(P)$, and $\tilde{x}_w$ the length of that interval after modification. Since $I(u)$ and $I(u')$ have length $1/\epochAsize_i$ and $I(v)$, $I(v')$ have length $1/\epochBsize_i$ we have: 
$$
\tilde{x}_u = \frac{1}{\epochAsize_i} \left ( 1 - \frac{1}{\epochAsize_i+\epochBsize_i} \right) \quad \text{and} \quad \tilde{x}_{u'} = \frac{1}{\epochAsize_i} \left ( 1 + \frac{1}{\epochAsize_i+\epochBsize_i} \right)
$$
and also, for the elements in the other chain,
$$
\tilde{x}_{v} = \frac{1}{\epochBsize_i} \left ( 1 - \frac{1}{\epochAsize_i+\epochBsize_i} \right) \quad \text{and} \quad \tilde{x}_{v'} = \frac{1}{\epochBsize_i} \left ( 1 + \frac{1}{\epochAsize_i+\epochBsize_i} \right).
$$

By \eqref{eq:ent_intervals}, this shows
\begin{align*}
n \cdot H(P') \leqslant  - \sum_{w \in X} \log \tilde{x}_w &=  -\sum_{w \in X} \log x_w + 2 \log \left (\frac{1}{\epochAsize_i} \right) +  2\log \left(\frac{1}{\epochBsize_i}\right)\\
 &\mbox{} \quad -  \log \frac{1}{\epochAsize_i} \left ( 1 - \frac{1}{\epochAsize_i+\epochBsize_i} \right) -  \log \frac{1}{\epochBsize_i} \left ( 1 - \frac{1}{\epochAsize_i+\epochBsize_i} \right)\\
 &\mbox{} \quad -\log \frac{1}{\epochAsize_i} \left ( 1 + \frac{1}{\epochAsize_i+\epochBsize_i} \right) -  \log \frac{1}{\epochBsize_i} \left ( 1 + \frac{1}{\epochAsize_i+\epochBsize_i} \right)\\
 &= -\sum_{w \in X} \log x_w + \underbrace{2 \log \left( \frac{1}{1 - \frac{1}{(\epochAsize_i+\epochBsize_i)^2}} \right)}_{=: f(\epochAsize_i,\epochBsize_i)}.
\end{align*}
So $n \cdot H(P')  \leqslant n \cdot H(P)  + f(\epochAsize_i,\epochBsize_i)$, which by theorem
\ref{PGH} implies $\Delta h \leqslant f(\epochAsize_i,\epochBsize_i)$.

Now by the structure of the intervals in $I(P)$ (Lemma~\ref{LSub}) it is clear that $P' = P'_1 \oplus P'_2$ for two smaller posets $P'_1$ and $P'_2$: the elements of $P'_1$ are those whose new interval is to the left of the breakpoint $\beta$ created by the local modification (see Figure~\ref{FigInt3}), and similarly the elements of $P'_2$ are those whose new interval is to the right of $\beta$. It is clear that both $P'_1$ and $P'_2$ each contain at least two elements of $P_i$, namely, $u$ and $v'$ for $P'_1$ and $v$ and $u'$ for $P'_2$. 

If $P'_1$ has less than $3$ elements of $P_i$, then $I(u)$ is the first interval of $P_i$ having length $1/\epochAsize_i$ and $I(v)$ the second interval of $P_i$ having length $1/\epochBsize_i$. This implies $\ell=1$ and $m+1 = 2$ in \eqref{eq:overlap} that is $\epochBsize_i = \epochAsize_i+1$, and this is a contradiction since we supposed $\epochAsize_i \geqslant \epochBsize_i$. Similarly if $P'_2$ has less than $3$ elements of $P_i$ then $\ell=\epochAsize_i-1$ and $m+1=\epochBsize_i$ and this implies $\epochAsize_i = \epochBsize_i + 1$.
\end{proof}

Now, we analyze how the number of linear extensions of $P$ changes after the deletion of the incomparability $uv$.

\begin{lemma} \label{EAB} Let $P_i$ be the subposet of $P$ induced by some $\epoch_i$ and $u$, $v$ be two elements of $P_i$ whose intervals in the interval representation of $P_i$ are such that $I(u) \cap I(v)$ is of length $1/\epochAsize_i\epochBsize_i$. Let $P' = P'_1 \oplus P'_2$ be the poset obtained by deleting the edge $uv$ from $\incompG{P}$. Then $$\Delta e := \log e(P) - \log e(P') \geqslant \log \left ( 1 + \frac{1}{2 \frac{\epochAsize_i}{\epochBsize_i} + 4} \right ).$$ \end{lemma}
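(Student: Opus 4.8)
The plan is to use the identity $e(P)=e(P')+e(P'')$, where $P'$ is the poset of the statement (so $u<_{P'}v$, by the construction in Lemma~\ref{EE}) and $P'':=P\cup\{v<u\}$ is the poset obtained by adding instead the \emph{reverse} relation. Since $u\|_P v$ there is no $z$ with $u<_P z<_P v$, so $P''$ is a legitimate poset; and every linear extension of $P$ either places $u$ before $v$, in which case it is a linear extension of $P'$, or places $v$ before $u$, in which case it is a linear extension of $P''$. Hence $\Delta e=\log\!\left(1+\frac{e(P'')}{e(P')}\right)$, and the lemma reduces to showing that the set $A_{vu}$ of linear extensions of $P$ with $v$ before $u$ satisfies $|A_{vu}|\geqslant\frac{1}{2\epochAsize_i/\epochBsize_i+4}\,|A_{uv}|$, where $A_{uv}$ is the set of those with $u$ before $v$.

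The core step is a structural description of the block of elements lying strictly between $u$ and $v$ in a linear extension $L\in A_{uv}$. Since $P$ has width $2$ with defining chains $A\ni u$ and $B\ni v$, every such element lies in $A$ or in $B$, and transitivity excludes any $z$ with $u<_P z<_P v$; it follows that the block is an initial segment $u'=a_{m+1},a_{m+2},\dots,a_{m+r}$ of the successors of $u$ in $A$ (all incomparable to $v$), possibly together with the single element $v'$. Here the crucial point is that $v'$ is the \emph{only} predecessor of $v$ in $B$ that is incomparable to $u$; this, and the bound $r\leqslant r_{\max}$ with $r_{\max}<\frac{\epochAsize_i+\epochBsize_i-1}{\epochBsize_i}$, I would obtain from the explicit interval representation of $P_i$ (consecutive intervals of lengths $1/\epochAsize_i$ and $1/\epochBsize_i$) together with the arithmetic identity $m\epochBsize_i-p\epochAsize_i=1$ behind \eqref{eq:overlap} --- and it is in proving the uniqueness of $v'$ that the standing hypothesis $\epochAsize_i\geqslant\epochBsize_i$ is used. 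Since moreover $v'<_P a_{m+1}<_P a_{m+2}<_P\cdots$, the block occurs inside $L$ in exactly that order.

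I would then define a ``swap'' map $\Phi\colon A_{uv}\to A_{vu}$ as follows: writing $L=\alpha\,u\,\gamma\,v\,\beta$ with $\gamma$ the block just described, set $\Phi(L):=\alpha\,\gamma'\,\beta$, where $\gamma'$ is the list $v'$ (if it occurs in $\gamma$), then $v$, then $u$, then $a_{m+1},\dots,a_{m+r}$. Using $v'<_P v$, $v'<_P a_{m+j}$ and $u<_P a_{m+j}$, together with the incomparabilities $u\|_P v$, $a_{m+j}\|_P v$ and $v'\|_P u$, one checks that $\Phi(L)$ is a linear extension of $P$ with $v$ before $u$. For the fibres: given $L^\ast\in A_{vu}$, a preimage is recovered by locating the (adjacent) pair $v,u$ in $L^\ast$, deciding whether the copy of $v'$ immediately preceding $v$ (if any) belonged to $\gamma$ or to $\alpha$ --- at most $2$ possibilities --- and deciding how much of the run of successors of $u$ immediately following $u$ in $L^\ast$ was part of $\gamma$ --- at most $r_{\max}+1$ possibilities, since any longer prefix would include a successor of $u$ comparable to $v$; these discrete choices pin down $L$. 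Therefore each fibre has at most $2(r_{\max}+1)\leqslant 2\!\left(\frac{\epochAsize_i+\epochBsize_i-1}{\epochBsize_i}+1\right)=\frac{2\epochAsize_i}{\epochBsize_i}+4-\frac{2}{\epochBsize_i}\leqslant\frac{2\epochAsize_i}{\epochBsize_i}+4$ elements, so $|A_{uv}|\leqslant\left(\frac{2\epochAsize_i}{\epochBsize_i}+4\right)|A_{vu}|$, and the lemma follows.

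I expect the main obstacle to be the precise arithmetic underlying the structural description: establishing that $v'$ is the only $B$-predecessor of $v$ incomparable to $u$, and bounding $r_{\max}$ sharply, both come down to comparing the endpoints $\tfrac{j}{\epochAsize_i}$ and $\tfrac{j}{\epochBsize_i}$ under \eqref{eq:overlap} and the hypothesis $\epochAsize_i\geqslant\epochBsize_i$, where a careless inequality would spoil the constant. A secondary technical point is checking that the fibre count of $\Phi$ is not inflated by the possibility that the suffix $\beta$ itself begins with further successors of $u$ in $A$ --- which is exactly why a genuine upper bound on $r_{\max}$, and not just its finiteness, is required.
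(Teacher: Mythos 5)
Your proposal is correct and is essentially the paper's own argument in different clothing: your swap map $\Phi$ sends each forward extension to precisely the paper's unique ``good'' backward extension inducing the same orientation on the edges of $\incompG{P}$ not incident to $u$ or $v$, and your fibre bound $2(r_{\max}+1)$ (a factor $2$ for the edge $uv'$, a factor $r_{\max}+1$ for how far $v$ slides past the successors of $u$) is the paper's bound $M\leqslant 2(s+1)\leqslant 2\frac{\epochAsize_i}{\epochBsize_i}+4$. Both rest on the same local structure --- $v'$ is the only other neighbour of $u$, and the neighbours of $v$ form a short run of successors of $u$ --- so the two proofs coincide in substance.
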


\begin{proof}
The inequality we have to prove can be rewritten
$$
e(P) \geqslant e(P'_1 \oplus P'_2) \cdot \left (1 + \frac{1}{2 \frac{\epochAsize_i}{\epochBsize_i}+4} \right ).
$$
Since the linear extensions of $P'_1 \oplus P'_2$ correspond to the linear extensions of $P$ with $u \prec v$, we have to establish that a big enough fraction of the linear extensions $\prec$ of $P$ have $v \prec u$. 

We call a linear extension $\prec$ of $P$ \emph{backward} if $v \prec u$, and \emph{forward} if $u \prec v$. The forward extensions correspond to those of $P' =  P'_1 \oplus P'_2$. Clearly, for a backward extension of $P$ we have in particular:

\begin{quote}
(*) $v \prec w$ for every element $w \neq u$ incomparable to $v$, and $z \prec u$ for every element $z \neq v$ incomparable to $u$.
\end{quote}

Indeed, for such a $w$, the interval $I(w)$ is located to the right of $I(u)$ hence $u \leqslant w$ in $P_i$ and by transitivity $v \prec u \prec w$. The second part of the statement is proved similarly.

\begin{figure}[ht]
\begin{center}
\begin{tikzpicture}[inner sep = 2.5pt]
\tikzstyle{vtx}=[circle,draw,thick,fill=gray!25]
\node[vtx] (v') at (0,1) {};
\node[vtx] (u) at (1,0) {};
\node[vtx] (v) at (3,1) {};
\node[vtx] (u') at (2,0) {};
\node[vtx] (u'') at (2,0) {};
\node[vtx] (u^s-1) at (4,0) {};
\node[vtx] (u^s) at (5,0) {};
\draw (v'.north) node[above] {$v'$};
\draw (v.north) node[above] {$v$};
\draw (u.south) node[below] {$u$};
\draw (u'.south) node[below] {$u'$};
\draw (u^s-1.south) node[below] {$u^{(s-1)}$};
\draw (u^s.south) node[below] {$u^{(s)}$};
\node at (3,0) {$\ldots$};

\draw (v') -- (u) -- (v) -- (u');
\draw (v) -- (u^s-1);
\draw (v) -- (u^s);

\draw[dashed] (1.5,-1.5) -- (1.5,2);
\node at (1,-1) {$P'_1$};
\node at (2,-1) {$P'_2$};
\end{tikzpicture}
\end{center}
\caption{The local structure of the graph $\incompG{P_i}$.}
\label{StruIJ}
\end{figure}

We call a forward extension \emph{good} if it satisfies property (*). Note that any good forward extension gives one backward extension, simply by interchanging $u$ and $v$ (which are consecutive in any good forward extension).

Every linear extension $\prec$ of $P$ induces an orientation of the incomparability graph $\incompG{P}$: we orient each edge $wz$ from $w$ to $z$ if $w \prec z$ in the extension. We define an equivalence relation $\sim$ on the set $\mathcal{E}(P)$ of linear extensions of $P$ by letting $\prec_1 \sim \prec_2$ if and only if $\prec_1$ and $\prec_2$ induce the same orientation of the edges $\incompG{P}$ incident to neither $u$ nor $v$.

Each class of this equivalence relation $\sim$ contains precisely:

\begin{itemize}
\item one good forward extension,
\item one good backward extension,
\item possibly some more forward extensions that are not good.
\end{itemize}

Hence the number of backward extensions is exactly the number of good forward extensions, and this is the number of classes of $\sim$, this quantity being at least
$$
\frac{e(P'_1 \oplus P'_2)}{M}
$$
where $M$ is the maximum cardinality of one class of $\sim$. Hence, summing the total number of forward extension and the minimum number of backward extension we have
$$
e(P) \geqslant e(P'_1 \oplus P'_2) + \frac{e(P'_1 \oplus P'_2)}{M} =  e(P'_1 \oplus P'_2) \cdot \left (1 + \frac{1}{M} \right)
$$
and it remains to prove
\begin{equation}
\label{eq:UB_on_M}
M \leqslant 2 \frac{\epochAsize_i}{\epochBsize_i} + 4.
\end{equation}
To do so we upper bound, for any given forward extension $\prec$, the number of possible orientations for the edges of $\incompG{P}$ that are incident to $u$ or $v$.

Let $$u = u^{(0)} < u' = u^{(1)} < \ldots < u^{(s)}$$denote the neighbors of $v$ in $\incompG{P}$ and $v' < v$ denote the neighbor of $u$ in this graph, see Figure~\ref{StruIJ}. Note that $v$ and $v'$ are the only neighbors of $u$ in $\incompG{P}$ because the interval $I(u)$ has length $\frac{1}{\epochAsize_i}$, the intervals $I(v)$ and $I(v')$ have length $\frac{1}{\epochBsize_i}$ and $\epochAsize_i \geqslant \epochBsize_i$ by assumption.

Looking at the interval representation of $P_i$, we see that the intervals $I(u')$, \ldots, $I(u^{(s-1)})$ are all included in $I(v)$ and cover an area that is at most the area of $I(v)$. In other words, we have
$$
\frac{s-1}{\epochAsize_i} \leqslant \frac{1}{\epochBsize_i}
\iff 
s \leqslant \frac{\epochAsize_i}{\epochBsize_i} + 1.
$$
We have exaclty $s+1$ different possibilities for inserting $v$ in the opposite chain, and hence a forward extension can orient the edges of $\incompG{P}$ incident to $u$ in exactly $s+1$ ways (recall that $u \prec v$ because the extension is forward). Because the edge $uv'$, which is the last edge we have to consider, can be oriented in at most two ways, we get $M \leqslant 2(s+1) \leqslant 2(\frac{\epochAsize_i}{\epochBsize_i} + 2)$ and \eqref{eq:UB_on_M} follows.
\end{proof}

And finally:

\begin{lemma} \label{LF} For all $x \geqslant y \geqslant 2$, we have
$$
2 \log \left (\frac{1}{1 - \frac{1}{(x+y)^2}} \right) \leqslant 
\frac{3}{2} \log \left ( 1 + \frac{1}{2 \frac{x}{y} + 4} \right).
$$
\end{lemma}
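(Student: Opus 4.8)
The plan is to reduce the two-variable inequality to a one-variable one and then to a quadratic. First I would normalize $y$ to $2$. Given $x \geqslant y \geqslant 2$, set $t := x/y \geqslant 1$ and compare the pair $(x,y)$ with $(x_0,y_0) := (2t,2)$. The right-hand side of the claimed inequality depends on $x$ and $y$ only through the ratio $x/y$, so it takes the same value at $(x,y)$ and at $(x_0,y_0)$. On the other hand $x_0+y_0 = 2(t+1) = 2(x+y)/y \leqslant x+y$ since $y \geqslant 2$, and the function $a \mapsto 2\log(1/(1-1/a^2)) = -2\log(1-1/a^2)$ is decreasing on $(1,\infty)$; hence the left-hand side is at least as large at $(x_0,y_0)$ as at $(x,y)$. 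Consequently it suffices to prove the inequality in the case $y=2$, $x\geqslant 2$.

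In that case, writing $w := x+2 \geqslant 4$, and using that $2x/y+4 = x+4 = w+2$, the inequality to be proved becomes
$$-2\log\left(1-\frac{1}{w^2}\right) \;\leqslant\; \frac{3}{2}\log\left(1+\frac{1}{w+2}\right),\qquad w\geqslant 4.$$
Here I would apply the two elementary estimates $-\ln(1-a)\leqslant a/(1-a)$ for $a\in[0,1)$ and $\ln(1+b)\geqslant b/(1+b)$ for $b\geqslant 0$ (each obtained by bounding the integrand in $\int_0^a \frac{dt}{1-t}$, resp.\ in $\int_0^b \frac{dt}{1+t}$), taken at $a = 1/w^2$ and $b = 1/(w+2)$. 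These give $-\ln(1-1/w^2)\leqslant 1/(w^2-1)$ and $\ln(1+1/(w+2))\geqslant 1/(w+3)$, so after dividing through by $\ln 2$ it is enough to check $\frac{2}{w^2-1} \leqslant \frac{3}{2(w+3)}$. Clearing denominators, this is $3w^2-4w-15 = (w-3)(3w+5)\geqslant 0$, which holds for every $w\geqslant 3$ and in particular for $w\geqslant 4$.

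I do not expect a genuine obstacle in this argument; the only point that needs a moment's care is whether the two rather crude logarithm bounds are tight enough to close the gap. The margins that appear at the end — the quadratic is already nonnegative from $w\geqslant 3$ rather than only $w\geqslant 4$, and the normalization step used only $y\geqslant 2$ rather than the full hypothesis $x\geqslant y\geqslant 2$ — indicate that these simple bounds are comfortably sufficient, and that no sharper tool (a second-order Taylor expansion of both logarithms, or a direct monotonicity analysis of the quotient of the two sides) is required.
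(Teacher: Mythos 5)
Your proof is correct. The reduction is sound: the right-hand side depends on $(x,y)$ only through $x/y$, and since $2(x+y)/y \leqslant x+y$ precisely when $y \geqslant 2$ while $a \mapsto -2\log\bigl(1-\tfrac{1}{a^2}\bigr)$ is decreasing on $(1,\infty)$, replacing $(x,y)$ by $(2x/y,\,2)$ can only increase the left-hand side, so it suffices to treat $y=2$; the two elementary estimates $-\ln(1-a)\leqslant a/(1-a)$ and $\ln(1+b)\geqslant b/(1+b)$ are applied correctly (the base of the logarithm cancels), and the final check $4(w+3)\leqslant 3(w^2-1)$, i.e.\ $(w-3)(3w+5)\geqslant 0$ for $w\geqslant 4$, closes the argument with room to spare. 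Your route is genuinely different from the paper's. The paper keeps both variables packaged in $u=x/y$: it rewrites the left-hand side as $2\log\bigl((u+\tfrac1u+2)/(u+\tfrac1u+2-\tfrac{1}{xy})\bigr)$, uses the hypothesis only through $1/(xy)\leqslant 1/4$ and $1/u\geqslant 0$ to weaken it to a one-variable expression in $u$, and then eliminates the logarithms by exponentiating, reducing the claim to the polynomial inequality $(u+2)^7\leqslant(u+\tfrac74)^4(u+\tfrac52)^3$, which it verifies by expanding and observing that all coefficients of the difference are nonnegative. Your normalization $(x,y)\mapsto(2x/y,2)$ is an exact monotone reduction rather than an inequality-weakening, and the secant/tangent bounds on the logarithm let you finish with a factorable quadratic instead of a degree-seven expansion; what you give up is sharpness in the logarithmic estimates (irrelevant here, as your final margins show), and what you gain is a shorter verification requiring no heavy algebraic computation.
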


\begin{proof}
First, note that 
$$ 
2\log \left (\frac{1}{1 - \frac{1}{(x+y)^2}} \right) = 2\log \left( \frac{\frac{x}{y} + \frac{y}{x} + 2 }{\frac{x}{y} + \frac{y}{x} + 2 - \frac{1}{xy}} \right)
$$
and since $x, y \geqslant 2$, we have $2 - \frac{1}{xy} \geqslant \frac{7}{4}$. Hence letting $u := \frac{x}{y}$ we have 
$$
2\log \left (\frac{1}{1 - \frac{1}{(x+y)^2}} \right) \leqslant  2\log \left(\frac{u + \frac{1}{u} + 2}{u + \frac{1}{u} + \frac{7}{4}} \right)
\leqslant
\log \left(\frac{u + 2}{u + \frac{7}{4}} \right)
$$
and
$$
\log \left ( 1 + \frac{1}{2 \frac{x}{y} + 4} \right) =  \log \left( \frac{u+5/2}{u+2} \right).
$$

The target inequality is thus implied by
$$
(u+2)^7 \leqslant (u+7/4)^4(u+5/2)^3
$$
which can be rewritten (after performing a straighforward computation) as
$$
0 \leqslant \frac{1}{2}u^6 + \frac{45}{8}u^5 + \frac{209}{8}u^4 + \frac{16401}{256}u^3 + \frac{44751}{512}u^2 + \frac{64323}{1024}u + \frac{37981}{2048}.
$$ 
The result follows.
\end{proof}

\section{The final discussion} \label{sec:proof}

\begin{proof}[Proof of Theorem~\ref{thm:width-2}]
Let now $P$ be any width-$2$ poset. The proof is by induction on $n$. Clearly, we may assume $n \geqslant 3$ since the theorem holds for $n \leqslant 2$. We have established in Section~\ref{sec:phantom_edges} that we may without loss of generality assume $P$ is an interval order that coincides with $I(P)$ except perhaps for a few incomparabilities: the phantom edges. 

Let $\epoch_{1} = \epochA_1 \cup \epochB_1,$ $\ldots$ $,\epoch_{l} = \epochA_\ell \cup \epochB_\ell$ be the epochs of $P$. In Section~\ref{sec:small_overlap} we proved that we may assume the following:

\begin{itemize}
\item for each $i$ we have $\gcd(\epochAsize_i,\epochBsize_i) = 1$ where $\epochAsize_i := |\epochA_i|$ and $\epochBsize_i := |\epochB_i|$;
\item either $\ell =1$, or $\ell = 2$ and $(|\epoch_{1}|, |\epoch_{2}|)$ is equal to $(2,m)$ or $(m,2)$ for an integer $m \geqslant 2$, or $\ell = 3$ and $(|\epoch_{1}|, |\epoch_{2}| ,|\epoch_{3}|)$ is equal $(2,m,2)$ for an integer $m \geqslant 2$.
\end{itemize}

Let $i \in \{1,2,3\}$ be such that $|\epochA_i|$ is maximum and assume without loss of generality that $\epochAsize_i \geqslant \epochBsize_i$. Then, combining Lemmas~\ref{AB}, \ref{EE}, \ref{EAB} and \ref{LF}, we are able to find a good edge to remove from $\incompG{P}$ in case $\epochAsize_i, \epochBsize_i \geqslant 2$ and $\epochAsize_i > \epochBsize_i + 1$. With this good edge in hand, we can complete the proof as explained in Section~\ref{sec:struct_proof}. Hence the only cases left to consider are the following ones:

\begin{enumerate}
\item $\ell =1$, $\epochAsize_1 = \epochBsize_1 + 1$;
\item $\ell =1$, $\epochBsize_1 = 1$ and $\epochAsize_1 \geqslant 3$;
\item $\ell =2$, $\epochBsize_i = 1$ and $\epochAsize_i \geqslant 2$;
\item $\ell =3$, $\epochBsize_i = 1$ and $\epochAsize_i \geqslant 2$;
\item $\ell =2$, $\epochAsize_i = \epochBsize_i + 1$.
\item $\ell =3$, $\epochAsize_i = \epochBsize_i + 1$.
\end{enumerate}

Note that for the second case, we assume $\epochAsize_1 \geqslant 3$ because the first one encompass the possibility $\epochAsize_1 = 2$ and $\epochBsize_1 = 1$. Each of these cases follows from the results of Section~\ref{sec:special_cases} below. In particular, we prove that $\ell = 1$ and $\epochAsize_1 = \epochBsize_1 + 1$ implies that $\incompG{P_1}$ is a path and that the theorem holds in this case. This concludes the proof.
\end{proof}

\section{Special cases} \label{sec:special_cases} 



In this section we consider the particular cases that we need to complete the proof of our main result, starting with the first case of the list here above.

\begin{lemma}
Let $P$ be the width-$2$ interval order obtained by putting side by side $x$ intervals of length $1/x$ starting at $0$ and then $y$ intervals of length $1/y$ starting at $0$, where $x \geqslant y \geqslant 2$. If $x = y + 1$ then $\incompG{P}$ is a path with an odd number of vertices.
\end{lemma}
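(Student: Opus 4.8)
The plan is to describe the interval order $P$ explicitly and then read off its incomparability graph. Let $A = \{a_1, \ldots, a_x\}$ be the elements whose intervals have length $1/x$, indexed so that $I(a_j) = \bigl(\tfrac{j-1}{x}, \tfrac{j}{x}\bigr)$, and let $B = \{b_1, \ldots, b_y\}$ be the elements whose intervals have length $1/y$, with $I(b_k) = \bigl(\tfrac{k-1}{y}, \tfrac{k}{y}\bigr)$. Both chains span $(0,1)$, so $A$ and $B$ are the two maximal chains of $P$ and $\incompG{P}$ is bipartite with bipartition $A \cup B$. Two elements $a_j$ and $b_k$ are incomparable exactly when their intervals overlap in a set of positive length, i.e.\ when $\tfrac{j-1}{x} < \tfrac{k}{y}$ and $\tfrac{k-1}{y} < \tfrac{j}{x}$, equivalently $-1 < jy - kx < y$ (after clearing denominators; here I use $x = y+1$ only at the end, so for the moment I keep $x$ general). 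The first step is therefore just to set up this incomparability condition precisely.

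Next I would specialize to $x = y+1$. Then $jy - kx = jy - k(y+1) = (j-k)y - k$, so the condition $-1 < (j-k)y - k < y$ needs to be analyzed. Writing $d := j - k$, this reads $-1 < dy - k < y$, i.e.\ $dy - y < k < dy + 1$, i.e.\ $(d-1)y < k < dy + 1$. Since $1 \leqslant k \leqslant y$, this forces $d \in \{1, 2\}$: for $d = 1$ we need $0 < k < y+1$, which holds for all $k \in \{1,\ldots,y\}$ with the single exception... wait — actually for $d=1$ the condition is $0 < k \le y$, all valid $k$; for $d = 2$ it is $y < k < 2y+1$, impossible since $k \le y$. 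Hmm, so I should recheck the arithmetic: the point is that the overlap condition, once $x=y+1$ is substituted, should pin down $j$ to within one of $k$ or $k+1$. I would carefully redo this elementary computation to conclude that $a_j$ and $b_k$ are incomparable if and only if $j \in \{k, k+1\}$ (equivalently $b_k$ is incomparable to exactly $a_k$ and $a_{k+1}$, when these exist).

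Given that adjacency description, the graph is immediate: $b_1 - a_1$, $b_1 - a_2$, $b_2 - a_2$, $b_2 - a_3$, \ldots, $b_y - a_y$, $b_y - a_{y+1}$, which is precisely the path
$$a_1 - b_1 - a_2 - b_2 - \cdots - a_y - b_y - a_{y+1}.$$
This path has $x + y = 2y + 1$ vertices, an odd number, as claimed. I would also note that $a_1$ and $a_{x}$ are the two endpoints (each of degree $1$) and that every other vertex has degree $2$, confirming it is a genuine path and not a disjoint union of paths (connectivity also follows since $\incompG{P}$ was assumed connected, or can be read off directly).

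The main obstacle, such as it is, is purely bookkeeping: getting the inequality chain $-1 < jy - kx < y$ right (strict versus non-strict, since intervals are open and "entirely to the left" uses $\leqslant$ on endpoints), and then checking that substituting $x = y+1$ collapses the band of admissible $j$ for each $k$ to exactly two consecutive values. There is no conceptual difficulty; one just has to handle the boundary indices $k = 1$ and $k = y$ (so that $a_0$ and $a_{y+2}$ do not exist) carefully so as not to claim spurious edges. Once the adjacency pattern is verified, identifying the graph as an odd path is automatic.
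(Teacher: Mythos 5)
Your overall strategy --- write down the intervals explicitly, determine exactly which pairs $(a_j,b_k)$ are incomparable, and read off the path --- is sound and genuinely different from the paper's proof, which never computes the adjacency: the paper instead counts $x+y=2y+1$ vertices, gets connectivity from $\gcd(x,y)=1$, exhibits one vertex of degree $1$, and bounds the maximum degree by $2$ via an argument on interval lengths and the fact that all endpoints lie at integer multiples of $1/(xy)$. Your route buys an explicit description of $\incompG{P}$ as the path $a_1-b_1-a_2-\cdots-b_y-a_{y+1}$, at the price of exactly the index bookkeeping you yourself flag.

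However, as written the central computation is wrong, and the spot where you sensed trouble is precisely the error. Clearing denominators in $(k-1)/y<j/x$ gives $yj-xk>-x$, not $>-1$; the correct incomparability condition is $-x<yj-xk<y$. With $x=y+1$ and $d:=j-k$ this reads $-(y+1)<dy-k<y$: the right inequality gives $d<1+k/y\leqslant 2$, hence $d\leqslant 1$, and the left gives $d>(k-y-1)/y\geqslant -1$, hence $d\geqslant 0$. So $d\in\{0,1\}$, i.e.\ $b_k$ is adjacent exactly to $a_k$ and $a_{k+1}$, which is the adjacency you asserted; your version with lower bound $-1$ yields only $d=1$, i.e.\ a perfect matching plus the isolated vertex $a_1$, which is why your case analysis came out inconsistent. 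You noticed the inconsistency but then stated the intended conclusion without redoing the computation, so the one step that carries the whole proof is left unverified in your write-up. Once the inequality is corrected as above, the rest of your argument (a path on $x+y=2y+1$ vertices, an odd number) is complete; note also that connectivity should not be imported from an assumption that $\incompG{P}$ is connected --- in this lemma $P$ is a standalone poset --- but it is immediate from the explicit path (or, as in the paper, from $\gcd(x,y)=1$).
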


\begin{proof}
Note first that the number of vertices of $\incompG{P}$ is equal to $x + y$ $=$ $2y + 1$ and hence it is odd.

Since $\gcd(x,y) = 1$, we know that $\incompG{P}$ is connected. Moreover, the graph has at least one degree-$1$ vertex, namely the vertex whose interval starts at $0$ and is of length $1/x$. Thus, it suffices to show that the degree of each vertex is at most $2$. But this is clear because if a vertex of $\incompG{P}$ has degree $d \geqslant 3$, then the corresponding interval contains the intervals of at least $d-2$ of its neighbors. In particular, the interval is necessarily of length $1/y$ and we have $d \leqslant 3$. Furthermore, the endpoints of all intervals are located at integer multiples of $1/xy$. Thus if an interval of length $1/y = x \cdot (1/xy)$ contains one interval of length $1/x = y \cdot (1/xy) = (x-1) \cdot (1/xy)$ then it intersects exactly one other interval (and moreover both intervals either start at $0$ or end at $1$). This implies that $d \leqslant 2$. The result follows.
\end{proof}

\begin{lemma} \label{PaT} 
Let $P$ be a poset whose incomparability graph is a path with $n \geqslant 3$ vertices, with $n$ odd. Then 
$$
|P| \cdot \cent{P} \leqslant (2-\varepsilon) \log e(P)
$$ 
\end{lemma}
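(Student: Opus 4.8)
The plan is to compute both sides of the inequality explicitly, or at least bound them tightly enough, when $\incompG{P}$ is a path $v_1 v_2 \cdots v_n$ with $n = 2m+1$ odd. First I would determine $\cent{P}$. Since $\incompG{P}$ is bipartite, I can apply the KM algorithm: the natural bipartition puts the odd-indexed vertices on one side and the even-indexed vertices on the other, but the path structure means that iterating "pick the subset maximising $|A_1|/|B_1|$, minimal" will single out individual vertices. As in Example~\ref{example_1}, the KM algorithm on a path gives pairs of the form $(\{a_i\},\{b_i\})$ with ratio $1$, so $\cent{P} = \log 2 \cdot \frac{(\text{something})}{n}$; I expect in fact $\cent{P} = 1$ when the two colour classes have the same size, but here $n$ is odd so the classes have sizes $m+1$ and $m$. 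I would work out that $|P|\cdot\cent{P} = H(\incompG{P})\cdot n$ equals exactly $2m = n-1$ (each of the $m$ matched pairs contributes $2\cdot h(1/2) = 2$ to $n \cent P$, and the one leftover vertex contributes $0$). So the left-hand side is $n-1 = 2m$.

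Next I would compute $e(P)$, the number of linear extensions of the poset $P$ whose incomparability graph is the path $v_1\cdots v_n$. The comparability graph is the complement of a path, and the poset is a width-$2$ poset consisting of two chains (the two colour classes, ordered along the path) with a "zig-zag" comparability pattern. This is a classical fencepost/zigzag count; I expect $e(P)$ to satisfy a linear recurrence in $n$, most likely $e_n = e_{n-1} + e_{n-2}$ (Fibonacci-type) or a close variant, with $e(P)$ growing like $\phi^n$ for $\phi = (1+\sqrt 5)/2$ — indeed in Example~\ref{example_1} with $n=6$ we had $e(P)=13 = F_7$, strongly suggesting $e(P) = F_{n+1}$, the $(n+1)$st Fibonacci number. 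I would establish this recurrence by conditioning on the position of an endpoint of the path (say $v_1$, which is comparable to all but one element, hence in any linear extension is either first, or second with $v_2$ first), reducing to the path on $n-1$ or $n-2$ vertices. Granting $e(P) = F_{n+1}$, the desired inequality becomes
$$
n - 1 \leqslant (2-\varepsilon)\log F_{n+1},
$$
and since $\log F_{n+1} \sim n\log\phi$ with $\log\phi \simeq 0.694$, the right side behaves like $(2-\varepsilon)\cdot 0.694\, n \simeq 1.21\, n$ for large $n$, comfortably beating $n-1$; so the inequality holds for all large $n$ and I would check the finitely many small odd $n \geqslant 3$ by hand (the tightest case is expected to be $n=3$, where $e(P)=3$, $|P|\cdot\cent P = 2$, and $(2-\varepsilon)\log 3 = 3\log 3 - 2 \simeq 2.75 \geqslant 2$ — note this is exactly where $\varepsilon$ was defined to make things tight).

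The main obstacle I anticipate is pinning down $e(P)$ precisely — identifying the right recurrence and base cases for the zigzag poset, and being careful that the poset attached to a path incomparability graph is well-defined up to the choice of orientation (a path incomparability graph comes from essentially one poset up to duality, so $e(P)$ is unambiguous). A secondary subtlety is making the monotonicity argument rigorous: I would show that $g(n) := (2-\varepsilon)\log F_{n+1} - (n-1)$ is eventually increasing (using $F_{n+1}/F_n \to \phi$ and $(2-\varepsilon)\log\phi > 1$), so that once $g(n_0)\geqslant 0$ for some explicit $n_0$ the bound holds for all larger odd $n$, reducing the verification to a short finite check. Everything else — the KM computation of $\cent P$ and the arithmetic with $\varepsilon = 2 - \frac{3\log 3 - 2}{\log 3}$ — is routine once these two ingredients are in place.
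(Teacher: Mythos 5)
Your count of linear extensions is fine ($e(P)=F_{n+1}$, exactly as in the paper, which cites Atkinson--Chang), but your computation of the left-hand side is wrong, and this is a genuine gap. For an \emph{odd} path the KM algorithm does not behave as in Example~\ref{example_1}: writing $n=2q+1$, the two colour classes have sizes $q+1$ and $q$, and the subset of $A$ maximizing $|S|/|N(S)|$ is the whole class $A$ itself, with ratio $(q+1)/q>1$, which beats the ratio $1$ of any leaf (one checks that $A$ is the unique maximizer, so minimality does not help you). Hence $k=1$, $A_1=A$, $B_1=B$, and
$$
|P|\cdot\cent{P}=(2q+1)\,h\!\left(\frac{q}{2q+1}\right)=(q+1)\log\frac{2q+1}{q+1}+q\log\frac{2q+1}{q},
$$
which is strictly larger than the value $n-1$ you claim (your picture of $q$ matched pairs plus a ``leftover vertex contributing $0$'' is not a KM decomposition of an odd path: the leftover vertex is not isolated). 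Concretely, at $n=3$ the correct value is $3\log 3-2\simeq 2.75$, not $2$, and $(2-\varepsilon)\log 3=3\log 3-2$ as well, so the inequality holds \emph{with equality} there; this is exactly the tight case that fixes $\varepsilon$, and any argument that reports slack at $n=3$ has computed the wrong entropy.

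With the corrected left-hand side your overall strategy can be repaired and then essentially coincides with the paper's proof: one checks $q\in\{1,2,3\}$ directly and, for $q\geqslant 3$, uses $F_{n}\geqslant\phi^{\,n-2}$ together with the fact that
$$
f(q):=\frac{\frac{q+1}{q}\log\frac{2q+1}{q+1}+\log\frac{2q+1}{q}}{2\log\phi}
$$
is decreasing, so $f(q)\leqslant f(3)\simeq 1.65\leqslant 2-\varepsilon$. But as written, your proposal both miscomputes $\cent{P}$ and, as a consequence, misidentifies where the inequality is tight, so the verification step does not establish the lemma.
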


\begin{proof}
It is known that, if $\overline{G}(P)$ is an $n$-vertex path, then $e(P) = F_{n+1}$ with $n = | P |$ and $F_{n+1}$ the $(n+1)$-th Fibonacci number, see for example Atkinson and Chang \cite{Atk}. To compute $\cent{P}$ we use the KM algorithm, see Theorem~\ref{KM}. Assume without loss of generality that the bipartition $A, B$ of $\incompG{P}$ satisfies $|A| \geqslant |B|$.

Because $n = 2q + 1$ is odd, we find $k = 1$ and $|A_1| = |A| = q+1$, $|B_1| = |B| = q$ (we leave it to the reader the task of verifying this). Hence we have
$$
|P|  \cdot \cent{P} = (2q+1) \cdot h \left (\frac{q}{2q+1} \right ) = (q+1) \log \left ( \frac{2q+1}{q+1} \right) + q \log \left ( \frac{2q+1}{q}\right).$$

By a direct computation, we see that the inequality holds for $q \in \{1,2,3\}$. Notice in passing that the inequality is tight for $q = 1$. For $q=2$ the ratio is equal to $$\frac{3 \log(5/3) +  2\log(5/2)}{\log(8)} \simeq 1.62$$
From now on, we assume $q \geqslant 3$. From the easy lower bound $F_n \geqslant \phi^{n-2}$, where $n \geqslant 3$ and $\phi := \frac{1+ \sqrt{5}}{2}$ is the golden ratio, we obtain
$$
\frac{| P | \cdot H(\overline{G}(P))}{\log(e(P))} \leqslant \frac{(q+1) \log \left ( \frac{2q+1}{q+1} \right) + q \log \left ( \frac{2q+1}{q}\right)}{\log (\phi^{2q})} = \underbrace{\frac{\frac{q+1}{q}  \log \left ( \frac{2q+1}{q+1} \right) + \log \left ( \frac{2q+1}{q}\right)}{2 \log (\phi)}}_{=: f(q)}.
$$
Since $f'(x) < 0$ for every $x > 0$ we get $f(q) \leqslant f(3) \simeq 1.65 \leqslant 2 - \varepsilon$ for every $q \geqslant 3$.
\end{proof} 

This concludes the proof of case (1) in the proof of Theorem~\ref{thm:width-2}. The following lemma settles case (2).

\begin{figure}[h]
\label{partic:2}
\centering
\begin{tikzpicture}
\tikzstyle{vtx}=[circle,draw,fill=gray!25]

\draw (7,12) -- (8,13) -- (9,12);
\node[vtx] at (8,13) {};
\node[vtx] at (7,12) {};
\node[vtx] at (9,12) {};
\node at (8,12) {\ldots};
\end{tikzpicture}
\caption{$\incompG{P}$ in the particular case (2) $\ell = 1$, $\epochBsize_1 = 1$,  $\epochAsize_1 \geqslant 3$.}
\end{figure}

\begin{lemma}
\label{lem:case(2)}
Let $P$ be a poset whose incomparability graph is a star with $n \geqslant 3$ vertices. Then 
$$
|P| \cdot \cent{P} \leqslant (2-\varepsilon) \log e(P)
$$
\end{lemma}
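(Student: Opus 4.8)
The plan is to make the poset $P$ fully explicit, evaluate both sides of the inequality in closed form, and reduce to an elementary estimate in one variable. Since $\incompG{P}$ is the star $K_{1,n-1}$, its $n-1$ leaves are pairwise non-adjacent in $\incompG{P}$, hence pairwise comparable in $P$, so they form a chain $C$; the center $c$ is incomparable to every element of $C$. Thus $P$ is the chain $C$ together with one extra element $c$ incomparable to all of $C$, and a linear extension of $P$ is specified exactly by the slot into which $c$ is inserted among the $n-1$ elements of $C$. Hence $e(P)=n$, i.e.\ $\log e(P)=\log n$.

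Next I would compute $\cent{P}=H(\incompG{P})$ with the KM algorithm (Theorem~\ref{KM}), using the (unique) bipartition of the star into its leaf set $A$, with $|A|=n-1$, and $B=\{c\}$. Every nonempty subset of $A$ has neighbourhood exactly $B$, so the ratio $|A_1|/|N(A_1)|$ is maximised by $A_1=A$, with $N(A)=B$, after which the remaining graph is empty; thus $k=1$, $A_1=A$, $B_1=B$, and therefore $\cent{P}=h\!\left(\tfrac{n-1}{n}\right)$ and
$$
|P|\cdot\cent{P}=n\,h\!\left(\tfrac{n-1}{n}\right)=(n-1)\log\tfrac{n}{n-1}+\log n .
$$
Consequently the statement of the lemma is equivalent to the inequality $(n-1)\log\tfrac{n}{n-1}\leqslant(1-\varepsilon)\log n$, which I would then establish for all $n\geqslant 3$.

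For $n=3$ this reads $2\log\tfrac{3}{2}\leqslant(1-\varepsilon)\log 3$, and in fact equality holds: from the definition of $\varepsilon$ one has $1-\varepsilon=2-\tfrac{2}{\log 3}=\tfrac{2(\log 3-1)}{\log 3}=\tfrac{2\log(3/2)}{\log 3}$, so $\varepsilon$ has been calibrated to make precisely this case tight (in agreement with the path case $q=1$ of Lemma~\ref{PaT}). For $n\geqslant 4$ I would combine the two bounds $(n-1)\log\tfrac{n}{n-1}=\log\bigl(1+\tfrac1{n-1}\bigr)^{n-1}<\log\mathrm{e}$ and $(1-\varepsilon)\log n\geqslant 2(1-\varepsilon)=4-\tfrac{4}{\log 3}$, together with the numerical comparison $\log\mathrm{e}<4-\tfrac{4}{\log 3}$ (indeed $\log\mathrm{e}<1.443$ while $4-\tfrac{4}{\log 3}>1.47$). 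I expect the only mildly delicate point to be this last numerical inequality $\log\mathrm{e}<2(1-\varepsilon)$; everything preceding it — the combinatorial description of $P$, the count $e(P)=n$, and the one-step KM computation — is routine.
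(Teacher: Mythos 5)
Your proof is correct and follows essentially the same route as the paper: the same KM computation giving $|P|\cdot\cent{P}=(n-1)\log\tfrac{n}{n-1}+\log n$ and $\log e(P)=\log n$, tightness at $n=3$, and for $n\geqslant 4$ the bound $\bigl(1+\tfrac{1}{n-1}\bigr)^{n-1}<\mathrm{e}$ combined with $\log n\geqslant 2$. Your final numerical step $\log\mathrm{e}\leqslant 2(1-\varepsilon)$ is exactly the paper's inequality $1+\tfrac{\log \mathrm{e}}{\log 4}\leqslant 2-\varepsilon$ in disguise, so the two arguments coincide.
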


\begin{proof}
Let $\epochAsize_1 = n - 1$ denote the number of leaves of the star and $\epochBsize_1 := 1$ (see Figure~\ref{partic:2}). We have $\log e(P) = \log (\epochAsize_1+1)$ and $$|P| \cdot \cent{P} = \epochAsize_1 \log \left (\frac{ \epochAsize_1+1}{\epochAsize_1} \right) + \log(\epochAsize_1+1).$$Now we are done since for $u \geqslant 2$: $$f(u) := \frac{u \log \left (\frac{u+1}{u} \right) + \log(u+1)}{\log(u+1)} \leqslant 2 - \varepsilon$$
Indeed, $f(2) = 2 - \varepsilon$ and for $u > 2$, the function $(1 + \frac{1}{u})^u$ is increasing and tends to the number $e$ for $u \to \infty$. Hence for $u \geqslant 3$ we have $(1 + \frac{1}{u})^u \leqslant e$ and so $$f(u) = 1 + \frac{u \log \left (\frac{u+1}{u} \right)}{\log(u+1)} \leqslant 1 + \frac{\log(e)}{\log(4)} \leqslant 1.73 \leqslant 2 - \varepsilon.$$
\end{proof}

Cases (3)--(6) in the proof of Theorem~\ref{thm:width-2} can be treated similarly as in Lemmas~\ref{PaT} and \ref{lem:case(2)}. We only summarize the main differences in Table~\ref{tble:partic} below. It is a straightforward task to turn the information in the table into a complete proof. We leave this to the reader.

\begin{table}[h]
\begin{center}
\begin{tabular}{|l|l|}
\hline
\multicolumn{2}{|c|}{\it Case 3: $\ell = 2$, $\epochBsize_i = 1$ and $\epochAsize_i \geqslant 2$}\\
\hline
\hline
$\displaystyle \log e(P) = \log (2\epochAsize_i+3)$ 
&$\displaystyle  |P| \cdot \cent{P} = 2 + \epochAsize_i \log \left (\frac{\epochAsize_i+1}{\epochAsize_i} \right) + \log(\epochAsize_i+1)$\\
\hline
$\displaystyle \frac{|P| \cdot  \cent{P}}{\log e(P)} \leqslant \frac{3\log(3)}{\log(7)} \leqslant 1.7$
&\mbox{}\raise-1ex\hbox{\begin{tikzpicture}[scale=.5, inner sep=2.5pt]
\tikzstyle{vtx}=[circle,draw,fill=gray!25]

\draw (5,13) -- (5,12) -- (7,13);
\draw (6,12) -- (7,13) -- (8,12);
\node[vtx] at (5,13) {};
\node[vtx] at (5,12) {};

\node[vtx] at (7,13) {};
\node[vtx] at (6,12) {};
\node[vtx] at (8,12) {};
\node at (7,12) {\ldots};
\node at (3,12.5) {$\incompG{P}$};
\end{tikzpicture}}\\

\hline
\multicolumn{2}{|c|}{We have $i=2$ in case $(|\epoch_{1}|, |\epoch_{2}|)$ is equal to $(2,m)$,}\\
\multicolumn{2}{|c|}{and $i=1$ in case $(|\epoch_{1}|, |\epoch_{2}|)$ is equal to $(m,2)$.}\\
\hline

\hline
\hline
\multicolumn{2}{|c|}{\it Case 4: $\ell = 3$, $\epochBsize_i = 1$ and $\epochAsize_i \geqslant 2$}\\
\hline
\hline
$\displaystyle \log e(P) = \log (4\epochAsize_i+8)$ 
&$\displaystyle  |P| \cdot \cent{P} = 4 + \epochAsize_i \log \left (\frac{\epochAsize_i+1}{\epochAsize_i} \right) + \log(\epochAsize_i+1)$\\
\hline
$\displaystyle \frac{|P| \cdot  \cent{P}}{\log e(P)} \leqslant \frac{2 + 3\log(3)}{4} \leqslant 1.7$
&\mbox{}\raise-1ex\hbox{\begin{tikzpicture}[scale=.5, inner sep=2.5pt]
\tikzstyle{vtx}=[circle,draw,fill=gray!25]

\draw (5,13) -- (5,12) -- (7,13);
\draw (6,12) -- (7,13) -- (8,12);
\draw (7,13) -- (9,12) -- (9,13);
\node[vtx] at (5,13) {};
\node[vtx] at (5,12) {};

\node[vtx] at (7,13) {};
\node[vtx] at (6,12) {};
\node[vtx] at (8,12) {};
\node[vtx] at (9,12) {};
\node[vtx] at (9,13) {};
\node at (7,12) {\ldots};
\node at (3,12.5) {$\incompG{P}$};
\end{tikzpicture}}\\

\hline
\multicolumn{2}{|c|}{Here we must have $i=2$.}\\
\hline

\hline
\hline
\multicolumn{2}{|c|}{\it Case 5: $\ell = 2$, $\epochBsize_i =  \epochAsize_i +1$}\\
\hline
\hline
$\displaystyle \log e(P) = \log (F_{2\epochBsize_i+4})$ 
&$\displaystyle  |P| \cdot \cent{P} = 2 + (\epochBsize_i+1) \log \left ( \frac{2\epochBsize_i+1}{\epochBsize_i+1} \right) + \epochBsize_i \log \left ( \frac{2\epochBsize_i+1}{\epochBsize_i}\right)$\\
\hline
$\displaystyle \frac{|P| \cdot  \cent{P}}{\log e(P)} \leqslant \frac{3\log(3)}{4 \log(\phi) } \leqslant 1.72$
&\mbox{}\raise-1ex\hbox{\begin{tikzpicture}[scale=.5, inner sep=2.5pt]
\tikzstyle{vtx}=[circle,draw,fill=gray!25]

\draw (5,13) -- (5,12) -- (7,13);
\draw (13,13) -- (14,12) -- (15,13) -- (16,12);
\draw (6,12) -- (7,13) -- (8,12) -- (9,13) -- (10,12) -- (11,13);
\node[vtx] at (5,13) {};
\node[vtx] at (5,12) {};

\node[vtx] at (7,13) {};
\node[vtx] at (6,12) {};
\node[vtx] at (8,12) {};
\node[vtx] at (9,13) {};
\node[vtx] at (10,12) {};
\node[vtx] at (11,13) {};
\node[vtx] at (13,13) {};
\node[vtx] at (14,12) {};
\node[vtx] at (15,13) {};
\node[vtx] at (16,12) {};
\node at (3,12.5) {$\incompG{P}$};
\node at (12,12.5) {$...$};
\end{tikzpicture}}\\
\hline
\multicolumn{2}{|c|}{We have $i=2$ in case $(|\epoch_{1}|, |\epoch_{2}|)$ is equal to $(2,m)$,}\\
\multicolumn{2}{|c|}{and $i=1$ in case $(|\epoch_{1}|, |\epoch_{2}|)$ is equal to $(m,2)$.}\\
\hline

\hline
\hline
\multicolumn{2}{|c|}{\it Case 6: $\ell = 3$, $\epochBsize_i =  \epochAsize_i +1$}\\
\hline
\hline
$\displaystyle \log e(P) = \log (F_{2\epochBsize_i+6})$ 
&$\displaystyle  |P| \cdot \cent{P} = 4 + (\epochBsize_i+1) \log \left ( \frac{2\epochBsize_i+1}{\epochBsize_i+1} \right) + \epochBsize_i \log \left ( \frac{2\epochBsize_i+1}{\epochBsize_i}\right)$\\
\hline
$\displaystyle \frac{|P| \cdot  \cent{P}}{\log e(P)} \leqslant \frac{2 + 3\log(3)}{6 \log(\phi) } \leqslant 1.7$
&\mbox{}\raise-1ex\hbox{\begin{tikzpicture}[scale=.5, inner sep=2.5pt]
\tikzstyle{vtx}=[circle,draw,fill=gray!25]

\draw (5,13) -- (5,12) -- (7,13);
\draw (15,13) -- (17,12) -- (17,13);
\draw (13,13) -- (14,12) -- (15,13) -- (16,12);
\draw (6,12) -- (7,13) -- (8,12) -- (9,13) -- (10,12) -- (11,13);
\node[vtx] at (5,13) {};
\node[vtx] at (5,12) {};

\node[vtx] at (7,13) {};
\node[vtx] at (6,12) {};
\node[vtx] at (8,12) {};
\node[vtx] at (9,13) {};
\node[vtx] at (10,12) {};
\node[vtx] at (11,13) {};
\node[vtx] at (13,13) {};
\node[vtx] at (14,12) {};
\node[vtx] at (15,13) {};
\node[vtx] at (16,12) {};
\node[vtx] at (17,12) {};
\node[vtx] at (17,13) {};
\node at (3,12.5) {$\incompG{P}$};
\node at (12,12.5) {$...$};
\end{tikzpicture}}\\

\hline
\multicolumn{2}{|c|}{Here we must have $i=2$.}\\
\hline
\end{tabular}
\end{center}

\caption{\label{tble:partic} For each one of cases (3)--(6) in the proof of Theorem~\ref{thm:width-2}, the table gives the expressions of both  $\log e(P)$ and $|P| \cdot \cent{P}$, an upper bound on the ratio and a drawing of the incomparability graph $\incompG{P}$.}
\end{table}

\section{Acknowledgments}

The two authors thank the anonymous referee for many useful remarks and suggestions that helped them to improve the presentation.

\end{document}